\declaretheorem[style=plain,parent=section,title=Theorem]{theo}
\declaretheorem[style=plain,sibling=theo,title=Proposition]{prop}
\declaretheorem[style=plain,sibling=theo,title=Corollary]{cor}
\declaretheorem[style=plain,sibling=theo,title=Lemma]{lem}
\declaretheorem[style=definition,sibling=theo,title=Definition]{defin}
\declaretheorem[style=remark,sibling=theo,title=Remark]{rem}
\declaretheorem[style=plain,numbered=no,title=Main Theorem]{main}
\newlist{Hassum}{enumerate}{1}
\setlist[Hassum]{label=\textbf{(H\arabic*)},ref=\textnormal{\textbf{(H\arabic*)}},font=\normalfont}
\newlist{myenum}{enumerate}{1}
\setlist[myenum]{label=\textbf{\roman*)},ref=\textnormal{\textbf{(\roman*)}},font=\normalfont}
\newlist{mylist}{itemize}{1}
\setlist[mylist]{label=\textbullet,font=\normalfont}
\DeclareMathOperator{\diam}{diam}
\DeclareMathOperator{\dis}{dis}
\DeclareMathOperator{\Orm}{O}
\newcommand{\ovB}{\overline{B}}
\newcommand{\Bcal}{\mathcal{B}}
\newcommand{\Ccal}{\mathcal{C}}
\newcommand{\Crm}{\mathrm{C}}
\newcommand{\Drm}{\mathrm{D}}
\newcommand{\Eds}{\mathds{E}}
\newcommand{\Fcal}{\mathcal{F}}
\newcommand{\ovH}{\overline{H}}
\newcommand{\whH}{\widehat{H}}
\newcommand{\ovL}{\overline{L}}
\newcommand{\whM}{\widehat{M}}
\newcommand{\Nds}{\mathds{N}}
\newcommand{\Pds}{\mathds{P}}
\newcommand{\Qds}{\mathds{Q}}
\newcommand{\Rds}{\mathds{R}}
\newcommand{\Tcal}{\mathcal{T}}
\newcommand{\Tds}{\mathds{T}}
\newcommand{\Xcal}{\mathcal{X}}
\newcommand{\Zds}{\mathds{Z}}
\newcommand{\drm}{\mathrm{d}}
\newcommand{\ovd}{\overline{d}}
\newcommand{\ovh}{\overline{h}}
\newcommand{\ovr}{\overline{r}}
\newcommand{\ovOmega}{\overline{\Omega}}
\newcommand{\ovTheta}{\overline{\Theta}}
\newcommand{\whTheta}{\widehat{\Theta}}
\newcommand{\ovvartheta}{\overline{\vartheta}}
\newcommand{\myemail}[2][]{\textsuperscript{#1}\href{mailto:#2}{\texttt{#2}}}
\newcommand{\g}{\mathfrak{g}}
\newcommand{\G}{\mathtt{G}}
\title{Stochastic Homogenization of the Hamilton--Jacobi Equation on Manifolds}
\author[1]{Marco Pozza}
\author[2]{Alfonso Sorrentino}
\affil[1]{Link Campus University, Rome, Italy. \textit{Email~address:}~\myemail{m.pozza@unilink.it}}
\affil[2]{Dipartimento di Matematica, Università degli Studi di Roma ``Tor Vergata'', Rome, Italy. \textit{Email~address:}~\myemail{sorrentino@mat.uniroma2.it}}
\date{}
\begin{document}

    \maketitle

    \begin{abstract}
        This article establishes a stochastic homogenization result for the first order Hamilton--Jacobi equation on a Riemannian manifold $M$, in the context of a stationary ergodic random environment. The setting involves a finitely generated abelian group $ \G$ of rank $b$ acting on $M$ by isometries in a free, totally discontinuous, and co-compact manner, and a family of Hamiltonians $H: T^*M \times \Omega \to \Rds$, parametrized over a probability space $(\Omega, \mathcal{F}, \Pds)$, which are stationary with respect to a $\Pds$-ergodic action of $\G$ on $\Omega$. Under standard assumptions, including strict convexity and coercivity in the momentum variable, we prove that as the scaling parameter $\varepsilon$ goes to $0$, the viscosity solutions to the rescaled equation converge almost surely and locally uniformly to the solution to a deterministic homogenized Hamilton--Jacobi equation posed on $\Rds^b$, which corresponds to the asymptotic cone of $\G$. In particular, this approach sheds light on the relation between the limit problem, the limit space, and the complexity of the acting group. The classical periodic case corresponds  to a randomness set $\Omega$ that reduces to a singleton; other interesting examples of this setting are also described.

        We remark that the effective Hamiltonian $\overline{H}$ is obtained as the convex conjugate of an effective Lagrangian $\overline{L}$, which generalizes Mather's $\beta$-function to the stochastic setting; this represents a first step towards the development of a stationary-ergodic version of Aubry--Mather theory. As a geometric application, we introduce a notion of stable-like norm for stationary ergodic families of Riemannian metrics on $M$, which generalizes the classical Federer--Gromov's stable norm for closed manifolds.
    \end{abstract}

    \section{Introduction}

    The theory of homogenization for first order Hamilton--Jacobi (HJ) equations originated with the seminal work of Lions, Papanicolaou, and Varadhan~\cite{LionsPapanicolaouVaradhan87}, who studied the asymptotic behavior of viscosity solutions to HJ equations associated to $\Zds^n$-periodic Hamiltonians on $\Rds^n\times \Rds^n$, as the spatial variable oscillates faster and faster. Their fundamental result established that the solutions converge to a limit function satisfying a homogenized equation, characterized by an effective Hamiltonian $\overline{H}: \Rds^n\rightarrow \Rds$ that is convex and independent of the spatial variable. This periodic framework has since been extended to more general settings, including problems on closed manifolds~\cite{ContrerasIturriagaSiconolfi14}, corresponding to a periodicity of the space variable with respect to a finitely generated abelian group (see also~\cite{Sorrentino19}, where the non-abelian case has been addressed).\\
    A crucial element in these results is the presence of a group of symmetries acting on the ambient space, hence determining a periodic structure. Under suitable assumptions on this action, periodicity provides a compact fundamental cell: it is this compactness that underpins the convergence process and, importantly, determines the nature of the limit space and the limit problem itself. Remarkably, it turns out that it is the periodicity that dictates the geometry of the limit space, which corresponds to the so-called {\it asymptotic cone} of the acting group.\\
    Extending this theory beyond the periodic case is a significant challenge: it is not clear what should play the role of the fundamental cell, what the appropriate limit space should be, or even if a homogenized limit exists at all.

    In this article, we consider a specific non-periodic setting. Let $M$ be a smooth Riemannian manifold (we denote the associated distance by  $d$)  and let $\G$ be a finitely-generated abelian group acting on $M$ by isometries; we denote by $b$ the rank of $\G$. We are not considering a Hamiltonian on $T^*M$ that is invariant under the action induced by $\G$ on $T^*M$, but we consider a family of Hamiltonians that are ``stochastically''  related by this action. This is modeled by introducing a probability space $(\Omega, \mathcal{F}, \Pds)$ on which $\G$ acts ergodically via measure-preserving transformations $\{\tau_g\}_{g \in \G}$, while the family of Hamiltonians is given in terms of a stochastic Hamiltonian $ H: T^*M \times \Omega \to \Rds $ satisfying the following assumptions: it is a continuous random function, $ \Crm^2 $ and strictly convex in the momentum variable for each $ \omega $, coercive, and, crucially, \emph{stationary} with respect to $ \{\tau_g\}_{g \in \G} $, namely: $H(g\cdot (x, p), \omega) = H(x, p, \tau_g(\omega))$ for all $(x,p)\in T^*M$ and $g \in \G$, where $g\cdot (x, p)$ denotes the action induced by $\G$ on $T^*M$.

    According to~\cite{PapanicolaouVaradhan81}, this framework can be seen as a generalization of the classical periodic, quasi-periodic and almost-periodic cases in $\Rds^n$, where the parameter $\omega$ controls the deviation from periodicity. In Appendix~\ref{sec:exe} we describe how to construct many examples that fit into our setting.

    In this context, we are able to prove a homogenization result. Roughly speaking, the ergodic action of the group $\G$ on the probability space provides a substitute for the missing periodicity in a single realization and allows us to recover some sort of compactness necessary for the asymptotic analysis.

    More precisely, for $ \varepsilon > 0 $, we consider the rescaled manifold $ M_\varepsilon = (M, \varepsilon\, d) $ and the rescaled Hamiltonian $ H_\varepsilon(x, p, \omega) \coloneqq H(x, p/\varepsilon, \omega) $. We investigate the asymptotic behavior, as $ \varepsilon \to 0 $, of solutions to the stochastic first order Hamilton--Jacobi equation
    \begin{equation*}
        \begin{cases}
            \partial_t u^\varepsilon(x,t,\omega) + H_\varepsilon(x, \partial_x u^\varepsilon(x,t,\omega), \omega) = 0, & \text{in } M_\varepsilon \times (0, \infty) \times \Omega, \\
            u^\varepsilon(x,0,\omega) = u_0^\varepsilon(x,\omega), & \text{in } M_\varepsilon \times \Omega,
        \end{cases}
    \end{equation*}
    where the initial data $ u_0^\varepsilon $ are random, uniformly continuous functions.\\

    Our main result can be summarized as follows (we refer to Section~\ref{sec:2.3} for a more precise statement with all underlying hypotheses):\\

    \noindent{\bf Outline of the Main Theorem.} {\it There exists a convex and superlinear effective Hamiltonian $ \overline{H}: \Rds^b \to \Rds $ such that, for almost every $ \omega \in \Omega $, the solutions $ u^\varepsilon(\cdot, \cdot, \omega) $ converge locally uniformly (in some suitable sense) to the unique uniformly continuous solution $ v $ of the homogenized equation
        \begin{equation*}
            \begin{cases}
                \partial_t v(h,t) + \overline{H}(\partial_h v(h,t)) = 0, & \text{in } \Rds^b \times (0, \infty), \\
                v(h,0) = v_0(h), & \text{in } \Rds^b,
            \end{cases}
        \end{equation*}
        provided the initial data $ u_0^\varepsilon(\omega) $ converge appropriately to a limit $ v_0: \Rds^b \to \Rds $. }

    \medskip

    We remark that:
    \begin{itemize}
        \item The limit space $ \Rds^b $, on which the limit problem is posed, emerges as the \emph{asymptotic cone} of the acting group $\G$ (see Section~\ref{conesec}).
        \item The convergence should be properly addressed since it involves functions that are defined on different spaces, which may have very different dimensions (see Section~\ref{sec:3.2}).
        \item The periodic case in~\cite{ContrerasIturriagaSiconolfi14, LionsPapanicolaouVaradhan87} can be seen as a special case, where the parameter space consists of a single element (see Appendix~\ref{sec:exe}).
        \item The effective Hamiltonian $ \overline{H} $ is the convex conjugate of an \emph{effective Lagrangian} $ \overline{L}: \Rds^b \to \Rds $, which generalizes Mather's $ \beta $-function from Aubry--Mather theory in this context. {This generalization of Mather's $\beta$-function to stationary ergodic families of Tonelli Lagrangians is of independent interest and it represents a first step towards the development of an Aubry--Mather theory in this context.} We also remark that in the case of Riemannian metrics, $ \overline{L} $ is related to the \emph{stable norm} on homology. In Appendix~\ref{sec:stablenorm}, we discuss how our investigation allows one to define a {\it stable norm} for families of stationary ergodic Riemannian metrics, overcoming the problem of having a non-compact ambient manifold and the absence of periodicity.
        \item It is crucial to emphasize that in the stationary ergodic setting, the homogenization of Hamilton--Jacobi equations generally fails without the convexity assumption on the momentum variable. In~\cite{Ziliotto17}, Ziliotto constructs an example of a Hamilton--Jacobi equation in the two-dimensional case such that, $\Pds$-almost surely, $u^{\varepsilon}(0,1,\omega)$ does not converge when $\varepsilon$ goes to $0$. Hence, there is no stochastic homogenization for this equation. In this example, the Hamiltonian $H$ satisfies all the standard assumptions of the literature except the convexity with respect to $p$. This counterexample breaks homogenization in a very ingenious way by designing a Hamiltonian where the non-convexity in $p$ interacts with the randomness to create a persistent, non-averaging oscillation.
    \end{itemize}

    \bigskip

    \bigskip

    \subsection{Comparison with previous literature}

    Since the seminal works by Lions, Papanicolaou, and Varadhan~\cite{LionsPapanicolaouVaradhan87} (see also~\cite{Evans89}), an extensive body of literature on the homogenization of the Hamilton–Jacobi equation has emerged in the context of periodic Hamiltonians, with significant extensions to more general settings, like almost periodic media~\cite{Ishii00}, non-Euclidean settings~\cite{ContrerasIturriagaSiconolfi14, Sorrentino19}, problems defined on junctions or periodic networks~\cite{CamilliMarchi23, ForcadelSalazar20, ForcadelSalazarZaydan18, GaliseImbertMonneau15, ImbertMonneau17, PozzaSiconolfiSorrentino24}, or by means of tools coming from symplectic topology (the so-called symplectic homogenization~\cite{Viterbo23}, see also~\cite{MonznerVicheryZapolsky12}).

    The foundations of the theory of stochastic homogenization of partial differential equations in stationary ergodic random environments were laid in the 1980s with the pioneering works of Papanicolaou and Varadhan~\cite{PapanicolaouVaradhan81,PapanicolaouVaradhan82} and Kozlov~\cite{Kozlov85}, who settled the linear case, while Dal Maso and Modica~\cite{DalMasoModica86,DalMasoModica86_2} extended the analysis to general variational problems.

    The stochastic homogenization of convex first-order Hamilton--Jacobi equations in Euclidean settings was first proved independently by Souganidis~\cite{Souganidis99} and Rezakhanlou and Tarver~\cite{RezakhanlouTarver00}. We also recall the contribution by Lions and Souganidis~\cite{LionsSouganidis10}, who introduced a more direct proof of homogenization in probability. Davini and Siconolfi~\cite{DaviniSiconolfi09, DaviniSiconolfi11, DaviniSiconolfi11_2, DaviniSiconolfi16} brought an interesting connection to Mather theory and weak KAM theory, providing new geometric insights into the problem. Armstrong and Souganidis~\cite{ArmstrongSouganidis12, ArmstrongSouganidis13} provided almost sure homogenization results based on a metric approach. The connection between the metric approach to homogenization and the theory of first-passage percolation played a crucial role in the works of Armstrong, Cardaliaguet, and Souganidis~\cite{ArmstrongCardaliaguet15, ArmstrongCardaliaguetSouganidis14}, where they obtained quantitative results for stochastic homogenization. We also mention~\cite{Viterbo25}, in which Viterbo provided an extension of symplectic homogenization to the stationary ergodic setting.

    The question of the homogenization of Hamilton--Jacobi equations in the general case where $H$ is not convex in $p$ remained opened for a while. Although a few particular cases were investigated ({\it e.g.}, \cite{ArmstrongSouganidis13,ArmstrongTranYu15, ArmstrongTranYu16, Gao16, Gao19}), the first counterexample was provided by Ziliotto in~\cite{Ziliotto17}, proving that in the stationary ergodic setting the homogenization of Hamilton--Jacobi equations generally fails without the convexity assumption.

    \bigskip

    \subsection{Organization of the article}

    This article is organized as follows.
    \begin{itemize}
        \item In Section~\ref{sec:2}, we describe our setting, in particular the parameter space $\Omega$ (Section~\ref{sec:2.1}) and the ambient manifold (Section~\ref{sec:2.2}). In Section~\ref{sec:2.3}, we introduce the notion of family of stationary-ergodic Hamiltonians, list the needed assumptions, and state our homogenization result (Main Theorem).
        \item Section~\ref{sec:3} is devoted to the study of the asymptotic geometry of the ambient manifold: we recall the needed notions and show how the rescaled spaces converge to the asymptotic cone of the group $\G$ (Section~\ref{conesec}). Moreover, in Section~\ref{sec:3.2} we introduce a notion of convergence for functions from the rescaled metric spaces to the asymptotic cone.
        \item In Section~\ref{effLsec}, we define the effective Lagrangian, which plays a fundamental role in both the proof of convergence and the determination of the effective Hamiltonian (which corresponds to its convex conjugate). This function is obtained via action-minimizing methods, inspired by Aubry--Mather theory, combined with subadditive ergodic techniques.
        \item The proof of the main convergence result (Main Theorem) is contained in Section~\ref{sec:5}.
        \item The appendices provide a description of several examples that motivate the entire framework (Appendix~\ref{sec:exe}), a discussion on how to use our main result to introduce a notion of stable-like norm adapted to stationary-ergodic families of Riemannian metrics (Appendix~\ref{sec:stablenorm}), as well as some auxiliary technical results (Appendix~\ref{app:A}).
    \end{itemize}

    \subsection*{Acknowledgments}

    The authors are grateful to Andrea Davini for his insightful comments. The authors acknowledge the support of the Italian Ministry of University and Research's PRIN 2022 grant ``\textit{Stability in Hamiltonian dynamics and beyond}'' and of the Department of Excellence grant MatMod@TOV (2023-27), awarded to the Department of Mathematics at University of Rome Tor Vergata. The authors are members of the INdAM research group GNAMPA. AS is member the UMI group DinAmicI.

    \bigskip

    \section{Setting and Main Result}\label{sec:2}

    \subsection{Probability Space}\label{sec:2.1}

    Throughout this paper $(\Omega,\Fcal,\Pds)$ will denote a probability space, where $\Pds$ is a probability measure and $\Fcal$ is the $\sigma$-algebra of the $\Pds$-measurable sets. We will say that a property holds \emph{almost surely} (a.s.\ for short) if it is valid up to a set of {$\Pds$-measure (or just {\it probability})} 0. A set of probability 1 will be called a set of \emph{full measure}.

    Let $\Xcal$ be a $\sigma$-algebra over a set $X$ and $\Xcal'$ be a $\sigma$-algebra over a set $X'$. A function $f:X\to X'$ will be called measurable if $f^{-1}(E)\in\Xcal$ whenever $E\in\Xcal'$. A topological space $X$ will always be assumed endowed with its Borel $\sigma$-algebra $\Bcal(X)$. Be \emph{random variable}, we will usually refer to a measurable function as above with $(X,\Xcal)=(\Omega,\Fcal)$.

    We are particularly interested in the case where the range of a random variable is a \emph{Polish space}, {\it i.e.}, a complete and separable space. By \emph{continuous random function}, we will intend a random variable whose image is in the Polish space of the continuous functions endowed with the metric of uniform convergence on compact subsets. With straightforward modification we retrieve this result from~\cite{DaviniSiconolfi09}:

    \begin{prop}
        Let $M$ be a manifold and $\omega\mapsto v(\cdot,\omega)$ be a map from $\Omega$ to $\Crm(M)$. The following are equivalent facts:
        \begin{myenum}
            \item $v$ is a random continuous function;
            \item $\omega\mapsto v(x,\omega)$ is measurable for every $x\in M$;
            \item the map $(x,\omega)\mapsto v(x,\omega)$ is jointly measurable, {\it i.e.}, measurable with respect to the product $\sigma$-algebra $\Bcal(M)\otimes\Fcal$.
        \end{myenum}
    \end{prop}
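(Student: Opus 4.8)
The plan is to establish the two equivalences \textbf{(i)} $\Leftrightarrow$ \textbf{(ii)} and \textbf{(ii)} $\Leftrightarrow$ \textbf{(iii)} separately; together they give the statement. The manifold hypothesis enters only through the fact that $M$ is a second countable, locally compact, hence $\sigma$-compact separable metric space, so that $\Crm(M)$ endowed with the metric of uniform convergence on compact subsets is a Polish space: this is precisely the point at which the argument of \cite{DaviniSiconolfi09} (stated there for $M=\Rds^n$) must be slightly adapted. I would fix once and for all an exhaustion $M=\bigcup_k K_k$ by compact sets, chosen so that each $K_k$ is the closure of its interior, together with a countable dense subset $\{x_n\}_n\subseteq M$.

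For \textbf{(i)} $\Rightarrow$ \textbf{(ii)}: each evaluation functional $\Crm(M)\ni f\mapsto f(x)$ is continuous for the topology of uniform convergence on compacts, hence Borel, so $\omega\mapsto v(x,\omega)$ is measurable as a composition of measurable maps. For the converse \textbf{(ii)} $\Rightarrow$ \textbf{(i)}, I would write the distance on $\Crm(M)$ explicitly as $\rho(f,g)=\sum_k 2^{-k}\,\|f-g\|_{K_k}(1+\|f-g\|_{K_k})^{-1}$ with $\|h\|_{K}=\max_{K}|h|$, and note that by continuity and the density assumption $\|f-g\|_{K_k}$ equals the supremum of $|f(x_n)-g(x_n)|$ over those $x_n$ lying in $K_k$. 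Consequently, for every fixed $g\in\Crm(M)$ the map $\omega\mapsto\rho(v(\cdot,\omega),g)$ is obtained from the functions $\omega\mapsto v(x_n,\omega)$ by countable suprema, sums, and a convergent series, hence is measurable by \textbf{(ii)}. Letting $g$ range over a countable dense subset of the separable space $\Crm(M)$ shows that the $v$-preimage of every open ball lies in $\Fcal$, so $v\colon\Omega\to\Crm(M)$ is measurable.

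For \textbf{(iii)} $\Rightarrow$ \textbf{(ii)}: fixing $x$, the section $\omega\mapsto v(x,\omega)$ of a $\Bcal(M)\otimes\Fcal$-measurable function is $\Fcal$-measurable, a standard property of product $\sigma$-algebras. For \textbf{(ii)} $\Rightarrow$ \textbf{(iii)} I would use the Carathéodory-function argument: for each $j\in\Nds$ choose a countable Borel partition $\{A^j_n\}_n$ of $M$ with $\diam A^j_n<1/j$ and a point $x^j_n\in A^j_n$, and set $v_j(x,\omega)\coloneqq\sum_n\mathbf{1}_{A^j_n}(x)\,v(x^j_n,\omega)$; each summand is $\Bcal(M)\otimes\Fcal$-measurable, one factor depending only on $x$ and being Borel, the other depending only on $\omega$ and being measurable by \textbf{(ii)}, so $v_j$ is jointly measurable, and $v_j(x,\omega)\to v(x,\omega)$ as $j\to\infty$ by continuity of $v(\cdot,\omega)$; a pointwise limit of jointly measurable functions is jointly measurable, giving \textbf{(iii)}. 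The only non-routine ingredient, and the step to get right, is the separability and completeness of $\Crm(M)$ invoked in \textbf{(ii)} $\Rightarrow$ \textbf{(i)}: once this and the explicit form of $\rho$ are in hand, the whole proof reduces to countable manipulations of the measurable functions $\omega\mapsto v(x_n,\omega)$.
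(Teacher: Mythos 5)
Your proof is correct. The paper itself offers no argument for this proposition — it simply states that the result is retrieved from~\cite{DaviniSiconolfi09} ``with straightforward modification'' — and your write-up is exactly that intended adaptation: the only genuinely new ingredient when passing from $\Rds^n$ to a manifold is the Polish structure of $\Crm(M)$ via a compact exhaustion, which you isolate correctly, while the equivalences themselves (evaluation functionals for \textbf{(i)}$\Rightarrow$\textbf{(ii)}, the explicit metric plus countable dense sets for \textbf{(ii)}$\Rightarrow$\textbf{(i)}, sections and the Carath\'eodory-type approximation for \textbf{(ii)}$\Leftrightarrow$\textbf{(iii)}) follow the standard route of the cited reference.
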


    \begin{defin}\label{def:induceddynG}
        Let $\G$ be a group with a given topology. We define a \emph{dynamical system induced by} $\G$ as a family of mappings of $\Omega$ into itself, denoted by $\{\tau_g\}_{g\in \G}$, which satisfies the following conditions:
        \begin{myenum}
            \item the \emph{group property}: $\tau_{gg'}=\tau_g\circ\tau_{g'}$ for any $g,g'\in \G$;
            \item the mappings $\tau_g:\Omega\to\Omega$ are measurable and \emph{measure preserving}, {\it i.e.}, $\Pds(\tau_g E)=\Pds(E)$ for all $E\in\Fcal$;
            \item the map $(g,\omega)\to\tau_g\omega$ from $\G\times\Omega$ to $\Omega$ is jointly measurable, {\it i.e.}, measurable with respect to the $\sigma$-algebras $\Bcal(\G)\otimes\Fcal$, $\Fcal$.
        \end{myenum}
        We will say that $\{\tau_g\}_{g\in \G}$ is \emph{ergodic} if it satisfies one of the following equivalent conditions:
        \begin{mylist}
            \item every measurable function $f$ defined on $\Omega$ such that $f(\tau_g\omega)=f(\omega)$ a.s.\ for any $g\in \G$, is almost surely constant;
            \item if $E\in\Fcal$ is such that $\Pds(\tau_g E\,\Delta\,E)=0$ for all $g\in \G$, where $\Delta$ stands for the symmetric difference, then $E$ has probability either 1 or 0.
        \end{mylist}
        Given a random variable $f:\Omega\to\Rds$, for any fixed $\omega\in\Omega$ the map $g\mapsto f(\tau_g\omega)$ is called a \emph{realization} of $f$.
    \end{defin}

    \subsection{The Ambient Space}\label{sec:2.2}

    Hereafter $M$ will denote a smooth connected finite-dimensional manifold without boundary, endowed with a complete Riemannian metric. We denote by $|\cdot|_x$ the induced norm on either $T_x M$ and $T^*_x M$, the fibers of respectively the tangent bundle $TM$ and the cotangent bundle $T^*M$. Finally, $d$ denotes the corresponding Riemannian distance on $M$.

    We consider an abelian group $\G$ acting on $M$ by isometries, namely,
    \begin{mylist}
        \item each $g\in \G$ corresponds to an isometry that we denote (with an abuse of notation) $g: M \longrightarrow M$;
        \item $g_1g_2(x)=g_1(g_2(x))$ for any $g_1,g_2\in \G$ and $x\in M$;
        \item there is an $e\in \G$ such that $e(x)=x$ for all $x\in M$ (here $e$ is the identity of $\G$).
    \end{mylist}

    We require the action induced by $\G$ to be:
    \begin{enumerate}[label=\textbf{(G\arabic*)},ref=\textnormal{\textbf{(G\arabic*)}},font=\normalfont]
        \item\label{Gfree} \emph{Free}: if $g(x)=x$ for some $x\in M$, then $g=e$;
        \item\label{Gtdisc} \emph{Totally discontinuous}: every point $x\in M$ has a neighborhood $U$ such that $g(U)\cap U=\emptyset$ for any $g\in \G$ with $g(x)\ne x$;
        \item\label{Gcocomp} \emph{Co-compact}: the quotient space $M/\G$ is compact with respect to the quotient topology.
    \end{enumerate}

    \medskip

    \begin{rem}\label{thisiscov}
        We point out that, under these conditions, the projection
        \begin{equation*}
            \pi:M\longrightarrow M/\G
        \end{equation*}
        induces a differential structure for which $M/\G$ is a compact Riemannian manifold and $\pi$ is a local isometry.
        \begin{myenum}[wide=0pt]
            \item\label{en:thisiscov1} It is known, see~\cite[Proposition~3.4.15]{BuragoBuragoIvanov01}, that the projection $\pi:M\to M/\G$ is a covering map and $\G$ is its group of deck transformations. In particular, this covering is \emph{regular}, {\it i.e.}, for every $y\in M/\G$ and $x,x'\in\pi^{-1}(y)$ there is a unique deck transformation $g$ so that $g(x)=x'$. It is easy to see that asking {the action of $\G$} to be free and totally discontinuous is equivalent to require that each $x\in M$ has a neighborhood $U$ so that
            \begin{equation*}
                g(U)\cap g'(U)=\emptyset,\qquad \text{for any $g,g'\in \G$ with $g\ne g'$}.
            \end{equation*}
            An action satisfying this property is sometimes called a \emph{covering space action}.\\
            Standard results in algebraic topology further show that $\G$ is a finitely generated abelian group, therefore
            \begin{equation}\label{eq:thisiscov1}
                \G\simeq\Zds^b\times\Tcal,
            \end{equation}
            where $b\ge0$ is the rank of $\G$ and $\Tcal$ is a (finite) torsion group. With an abuse of notation we will identify {both $\Zds^0$ and $\Rds^0$ with the set $\{0\}$}. Let us observe that if $b$ in~\eqref{eq:thisiscov1} is $0$, namely $\G$ is a torsion group, and only in this case, we have that $M$ is compact, see \zcref{Gtorsion}.
            \item It follows from the above discussion that $M$ corresponds to an abelian cover of $\widehat M\coloneqq M/\G$. Vice versa, any abelian cover fits to our setting. We recall that an \emph{abelian cover} of a compact, connected smooth manifold $ \widehat M $ is a covering space $ \pi: M \to \widehat M $ where the covering group is abelian. In other words, the covering space $ M $ has a structure such that the group of deck transformations (which is the group of homeomorphisms of $ M $ that commute with the covering map $ \pi $) forms an abelian group. A \emph{maximal abelian cover} is the covering space where the abelian group of deck transformations is as large as possible, meaning that it is the largest abelian cover that can be obtained for the manifold $ \widehat M $ (sometimes it is also called the \emph{universal Abelian covering space}). We remark that in our case we are not assuming that $M$ corresponds to a maximal abelian cover.
        \end{myenum}
    \end{rem}

    \begin{rem}\label{actonbundle}
        $\G$ also defines a free and totally discontinuous group of actions on $TM$ and $T^*M$ as well. This is done by setting for each $g\in \G$
        \begin{align*}
            g_*:TM&\longrightarrow TM,&g^*:T^*M&\longrightarrow T^*M,\\
            (x,q)&\longmapsto\mleft(g(x),\drm g_x(q)\mright),&(x,p)&\longmapsto\mleft(g(x),p \circ (\drm g_x)^{-1}\mright).
        \end{align*}
    \end{rem}

    \subsection{The Hamiltonian, the problem and the Main Theorem}\label{sec:2.3}

    Throughout the rest of the paper we will implicitly assume the following standing assumptions:\\

    \noindent{\bf Standing assumptions:} {\it Let $M$ be a smooth connected finite-dimensional manifold without boundary, endowed with a complete Riemannian distance $d$ and let $(\Omega,\Fcal,\Pds)$ be a probability space.

        \smallskip

        Let $\G$ be a finitely generated abelian group of rank $b$ such that:
        \begin{itemize}
            \item $\G$ acts on $M$ by isometries and that this action satisfies hypotheses \zcref[comp,noname]{Gfree,Gtdisc,Gcocomp};
            \item $\G$ induces a dynamical system $\{\tau_g\}_{g\in \G}$ on $\Omega$, as in Definition \ref{def:induceddynG}.
        \end{itemize}
    }

    \medskip

    We are now interested in a stochastic Hamilton--Jacobi equation posed on $M$. 

    \begin{defin}\label{statdef}
        A jointly measurable function $v$ defined in $M\times\Omega$ is said \emph{stationary} if for each $g\in \G$ there is a set $\Omega_g$ of full measure such that
        \begin{equation*}
            v(x,\tau_g\omega)=v(g(x),\omega),\qquad \text{for every $x\in M$ and $\omega\in\Omega_g$}.
        \end{equation*}
    \end{defin}

    Taking into account \zcref{actonbundle}, \zcref{statdef} can be seamlessly extended to jointly measurable functions defined in $TM\times\Omega$.

    Next we consider a Hamiltonian $H:T^*M\times\Omega\to\Rds$ satisfying the following conditions:
    \begin{Hassum}
        \item\label{condcont} $\omega\mapsto H(\cdot,\cdot,\omega)$ is a continuous random function;
        \item\label{condreg} for each $\omega\in\Omega$, $H(\cdot,\cdot,\omega)$ is $\Crm^2$;
        \item\label{condconv} for any $(x,\omega)\in M\times\Omega$ the map $p\mapsto H(x,p,\omega)$ is strictly convex;
        \item\label{condcoerc} there exist two convex nondecreasing superlinearly coercive functions $\vartheta,\Theta:\Rds^+\to\Rds$ such that
        \begin{equation*}
            \vartheta(|p|_x)\le H(x,p,\omega)\le\Theta(|p|_x),\qquad \text{for any }(x,p,\omega)\in T^*M\times\Omega;
        \end{equation*}
        \item\label{condstat} $H$ is stationary, {\it i.e.},
        \begin{equation*}
            H(x,p,\tau_g\omega)=H(g^*(x,p),\omega),\qquad \text{for any $(x,p,\omega)\in T^*M\times\Omega$ and $g\in \G$}.
        \end{equation*}
    \end{Hassum}

    Moreover, $H$ is the realization of a Hamiltonian defined on the quotient manifold $M/\G$. Indeed, if we fix for each $y\in M/\G$ an $x_y\in\pi^{-1}(y)$ so that $x_y\mapsto y$ is a local isometry and define
    \begin{equation}\label{eq:Hproj}
        \whH(y,p,\omega)\coloneqq H(x_y,p,\omega),\qquad \text{for each }(x,p,\omega)\in T^*(M/\G)\times\Omega,
    \end{equation}
    we have that $\whH$ is a Hamiltonian on $T^*(M/\G)\times\Omega$ satisfying \zcref[comp]{condcont,condconv,condreg,condcoerc} and
    \begin{equation}\label{eq:lift2rlztn}
        \whH(y,p,\tau_g\omega)=H(x_y,p,\tau_g\omega)=H(g^*(x_y,p),\omega),
    \end{equation}
    which is to say that $g\mapsto H(g^*(x_y,p),\omega)$ is a realization of $\omega\mapsto\whH(y,p,\omega)$. Conversely, given a Hamiltonian $\whH$ on $T^*(M/\G)\times\Omega$ its realizations define a stationary Hamiltonian on $M$.

    \medskip

    \begin{rem}
        We point out that if $\Omega$ is a singleton, then our problem correspond to the periodic case studied in~\cite{ContrerasIturriagaSiconolfi14}, where the Hamiltonian $H$ defined on $T^*M$ is deterministic and corresponds to the \emph{lift} of a Hamiltonian $\whH$ defined in $T^*(M/\G)$, namely
        \begin{equation*}
            H(x,p,\omega)=\whH(\pi^*(x,p),\omega),\qquad \text{for any $(x,p)\in T^*M$},
        \end{equation*}
        where $\pi^*$ is the inverse of the pullback by the projection $\pi:M\to M/\G$. See also \zcref{sec:exe}.
    \end{rem}

    \begin{rem}\label{liftisdet}
        If $H$ is the lift of a Hamiltonian $\whH:T^*(M/\G)\times\Omega\to\Rds$, then, for any $(x,p)\in T^*M$, $\omega\in\Omega$ and $g\in \G$,
        \begin{equation*}
            \whH(\pi^*(x,p),\tau_g\omega)=H(x,p,\tau_g\omega)=H(g^*(x,p),\omega)=\whH(\pi^*(x,p),\omega).
        \end{equation*}
        The ergodicity of $\{\tau_g\}$ thus yields that $\whH$, and accordingly $H$, is a.s.\ deterministic.
    \end{rem}

    We refer to Appendix~\ref{sec:exe} for the discussion of more motivating examples fitting our framework.

    \bigskip

    For any $\varepsilon>0$ we will denote with $M_\varepsilon$ the metric space $(M,d_\varepsilon\coloneqq\varepsilon d)$ and define
    \begin{equation*}
        H_\varepsilon(x,p,\omega)\coloneqq H\mleft(x,\dfrac p\varepsilon,\omega\mright),\qquad \text{for each }(x,p,\omega)\in T^*M\times\Omega.
    \end{equation*}
    We further set the Lagrangians $L$ and $L_\varepsilon$ as the convex conjugates of $H$ and $H_\varepsilon$, respectively. We point out that
    \begin{equation}\label{eq:lagequiv}
        L_\varepsilon(x,q,\omega)=L(x,\varepsilon q,\omega).
    \end{equation}

    We are interested in the asymptotic behavior, as $\varepsilon$ tends to 0, of the solution to the following stochastic Hamilton--Jacobi equation:
    \begin{equation}\label{eq:HJeps}\tag{H\textsubscript{\ensuremath{\varepsilon}}J\ensuremath{\omega}}
        \mleft\{
        \begin{aligned}
            &\partial_t u(x,t,\omega)+H_\varepsilon\mleft(x,\partial_x u,\omega\mright)=0,&& \text{in }M_\varepsilon\times(0,\infty)\times\Omega,\\
            &u(x,0,\omega)=u^\varepsilon_0(x,\omega),&& \text{in }M_\varepsilon\times\Omega,
        \end{aligned}
        \mright.
    \end{equation}
    where $x\mapsto u_0^\varepsilon(x,\omega)$ is a random uniformly continuous function for any $\omega\in\Omega$.

    \medskip

    As proved in~\cite[Theorems~1.1, 1.2 and Corollary~8.12]{Fathi24}, for each $\varepsilon>0$ and $\omega\in\Omega$ fixed, the unique uniformly continuous solution to~\eqref{eq:HJeps} is given by the value function
    \begin{equation}\label{eq:vfeps}
        u_\varepsilon(x,t,\omega)\coloneqq\inf_{x'\in M_\varepsilon}\mleft\{u_0^\varepsilon\mleft(x',\omega\mright)+\phi_\varepsilon\mleft(x',x,t,\omega\mright)\mright\},
    \end{equation}
    where $\phi_\varepsilon$ is defined by the variational formula
    \begin{equation}\label{eq:minacteps}
        \phi_\varepsilon\mleft(x',x,t,\omega\mright)\coloneqq\inf\mleft\{\int_0^t L_\varepsilon(\gamma,\dot\gamma,\omega)d\tau\mright\}
    \end{equation}
    and the infimum above is taken over the absolutely continuous curves $\gamma:[0,t]\to M_\varepsilon$ with $\gamma(0)=x'$ and $\gamma(t)=x$.

    \begin{rem}\label{phistat}
        We report the following properties of the minimal action $\phi_\varepsilon$.
        \begin{myenum}[wide=0pt]
            \item Setting
            \begin{equation}\label{eq:minact}
                \phi\mleft(x',x,t,\omega\mright)\coloneqq\inf\mleft\{\int_0^t L(\gamma,\dot\gamma,\omega)d\tau\mright\},
            \end{equation}
            where the infimum above is taken over the absolutely continuous curves $\gamma:[0,t]\to M$ with $\gamma(0)=x'$ and $\gamma(t)=x$, \zcref{eq:lagequiv} yields
            \begin{equation}\label{eq:minactequiv}
                \phi_\varepsilon\mleft(x',x,t,\omega\mright)=\varepsilon\phi\mleft(x',x,\frac t\varepsilon,\omega\mright),\qquad \text{for all $\mleft(x',x,t\mright)\in M^2\times\Rds^+$, $\omega\in\Omega$ and $\varepsilon>0$}.
            \end{equation}
            \item\label{en:phistat2} Since $H$ is stationary by~\ref{condstat}, so is $L$. It then follows that, for any $(x',x,t)\in M_\varepsilon^2\times\Rds^+$, $\omega\in\Omega$, $\varepsilon>0$ and $g\in \G$,
            \begin{equation*}
                \phi_\varepsilon\mleft(x',x,t,\tau_g\omega\mright)=\phi_\varepsilon\mleft(g\mleft(x'\mright),g(x),t,\omega\mright).
            \end{equation*}
        \end{myenum}
    \end{rem}

    \medskip

    We can now state our Main Theorem.

    \begin{main}
        \noindent Given a Hamiltonian $H:T^*M\times\Omega\to\Rds$ satisfying \zcref[comp]{condcoerc,condcont,condconv,condreg,condstat}, there exists a convex and superlinear effective Hamiltonian $\ovH:\Rds^b\to\Rds$ such that the following holds.

        \smallskip

        \noindent Let $\{u_0^\varepsilon\}_{\varepsilon>0}$ be a collection of random functions from $M_\varepsilon=(M, \varepsilon d)$ to $\Rds$ uniformly equicontinuous with respect to the metrics $\varepsilon d $ (in the sense of \zcref{eqcontdef}) and a.s.\ pointwise convergent (in the sense of \zcref{coneconvdef}) to $v_0:\Rds^b\to\Rds$ as $\varepsilon$ goes to $0^+$. Then, the solution $u_\varepsilon$ to~\zcref{eq:HJeps} locally uniformly converges (in the sense of \zcref{coneconvdef}) to the only uniformly continuous solution to
        \begin{equation*}
            \mleft\{
            \begin{aligned}
                &\partial_t v(h,t)+\ovH(\partial_h v)=0,&& \text{in $\Rds^b\times(0,\infty)$},\\
                &v(h,0)=v_0(h),&& \text{in $\Rds^b$}.
            \end{aligned}
            \mright.
        \end{equation*}
    \end{main}

    \medskip

    The proof of the Main Theorem will be provided in Section~\ref{sec:5}. 

    \bigskip

    \section{Asymptotic Geometry}\label{sec:3}

    \subsection{The Asymptotic Cone}\label{conesec}

    We start this \zcref[noref,nocap]{conesec} by recalling some definitions.

    \begin{defin}
        Given two metric spaces $(X,d_X)$ and $(Y,d_Y)$ and a map $f:X\to Y$, we define the \emph{distortion} of $f$ as
        \begin{equation*}
            \dis f\coloneqq\sup_{x,x'\in X}\mleft|d_X\mleft(x,x'\mright)-d_Y\mleft(f(x),f\mleft(x'\mright)\mright)\mright|.
        \end{equation*}
        If there exists an $\epsilon>0$ such that for any $y\in Y$ there is an $x\in X$ so that $d_Y(y,f(x))\le\epsilon$ and $\dis f\le\epsilon$, we will say that $f$ is an \emph{$\epsilon$-isometry}.
    \end{defin}

    \begin{rem}\label{compepsiso}
        It is apparent that the composition of an $\epsilon_1$-isometry and an $\epsilon_2$-isometry is an \emph{$(\epsilon_1+\epsilon_2)$-isometry}.
    \end{rem}

    \begin{defin}
        A \emph{pointed metric space} is a pair $(X,x)$ consisting of a metric space $X$ and a reference point $x\in X$.\\
        We will say that a sequence of pointed metric spaces $\{(X_n,x_n)\}_{n\in\Nds}$ \emph{converges in the Gromov--Hausdorff sense} to a pointed metric space $(X,x)$ if, for any $\delta>0$, $r>0$ and $n\in\Nds$ big enough, there is a map
        \begin{equation*}
            f_n:B_n(x_n,r)\longrightarrow X,
        \end{equation*}
        where $B_n(x_n,r)\subseteq X_n$ is the ball centered at $x_n$ with radius $r$, such that
        \begin{mylist}
            \item $f_n(x_n)=x$;
            \item $\dis f_n<\delta$;
            \item the $\delta$-neighborhood of $f_n(B_n(x_n,r))$ contains the ball $B_{r-\delta}(x)\subseteq X$.
        \end{mylist}
    \end{defin}

    \begin{rem}
        It follows from the definition that if a sequence of pointed metric spaces converges to $(X,x)$ it also converges to its completion, thus we will always assume that the Gromov--Hausdorff limit spaces are complete.\\
        It is shown in~\cite[Theorem~8.1.7]{BuragoBuragoIvanov01} that the Gromov--Hausdorff limit of a sequence of pointed spaces is essentially unique: if two complete pointed metric spaces $(X,x)$ and $(X',x')$ are the Gromov--Hausdorff limits of the same sequence and all the closed bounded subset of $X$ are compact, then there is an isometry $f:X\to X'$ such that $f(x)=x'$.
    \end{rem}

    \begin{defin}\label{asconedef}
        Let $(X,d)$ be a metric space and define, for any $\varepsilon>0$, the rescaled metric spaces $X_\varepsilon\coloneqq(X,\varepsilon d)$. We call the \emph{Gromov--Hausdorff asymptotic cone} of $X$ the limit space in the Gromov--Hausdorff sense, if it exists, of the pointed metric spaces $\{(X_\varepsilon,x)\}$ as $\varepsilon$ tends to 0. It can be proved (\cite[Proposition~8.2.8]{BuragoBuragoIvanov01}) that the asymptotic cone does not depend on the choice of the reference point $x$.
    \end{defin}

    \begin{prop}\label{epsisoghcone}
        Let $(X,d_X)$ and $(Y,d_Y)$ be two metric spaces, and assume that there is an $\epsilon$-isometry between them. If $X$ has an asymptotic cone then it is also an asymptotic cone of $Y$.
    \end{prop}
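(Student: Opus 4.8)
The plan is to turn the $\epsilon$-isometry $f\colon X\to Y$ into a compatible family of approximate identifications of the rescaled balls of $Y$ with the asymptotic cone of $X$. Fix a base point $x_0\in X$; by \zcref{asconedef} the asymptotic cone is independent of the base point, so, writing $(C,c)$ for the Gromov--Hausdorff limit of the pointed spaces $(X_\lambda,x_0)$ as $\lambda\to0^+$, it suffices to prove that $(Y_\lambda,f(x_0))$ converges to $(C,c)$ as well. First I would record the scaling behaviour: rescaling the metrics of $X$ and $Y$ by a factor $\lambda$ multiplies both $\dis f$ and the ``covering radius'' of $f$ by $\lambda$, so $f\colon X_\lambda\to Y_\lambda$ is a $\lambda\epsilon$-isometry; in particular $\dis f\le\lambda\epsilon\to0$.

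Next I would produce a quasi-inverse of $f$. Using that $f$ is almost onto with parameter $\epsilon$, choose for each $y\in Y$ a point $g(y)\in X$ with $d_Y(y,f(g(y)))\le\epsilon$. Two applications of the triangle inequality, together with $\dis f\le\epsilon$, give $\dis g\le 3\epsilon$ and $d_X(x,g(f(x)))\le 2\epsilon$ for every $x\in X$; after rescaling, $g\colon Y_\lambda\to X_\lambda$ satisfies $\dis g\le 3\lambda\epsilon$ and $d_{X_\lambda}(x_0,g(f(x_0)))\le 2\lambda\epsilon$. The candidate identifications for $Y_\lambda$ will then be the compositions $h_\lambda\circ g$, where $h_\lambda$ is the map supplied by the Gromov--Hausdorff convergence of $(X_\lambda,x_0)$.

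Concretely, fix $\delta>0$ and $r>0$, set $\delta'=\delta/4$, and apply the convergence $(X_\lambda,x_0)\to(C,c)$ with radius $r+1$: for all small $\lambda$ there is a map $h_\lambda\colon B_{X_\lambda}(x_0,r+1)\to C$ with $h_\lambda(x_0)=c$, $\dis h_\lambda<\delta'$, and $B_C(c,r+1-\delta')$ contained in the $\delta'$-neighbourhood of its image. Shrinking $\lambda$ so that $5\lambda\epsilon<\min\{1,\delta'\}$, the composition $F_\lambda:=h_\lambda\circ g$ is well defined on $B_{Y_\lambda}(f(x_0),r)$, because $d_{X_\lambda}(g(y),x_0)<r+5\lambda\epsilon<r+1$ there; after redefining $F_\lambda(f(x_0)):=c$ (which changes the value by at most $2\lambda\epsilon+\delta'$, hence alters the distortion by at most that amount), one checks the three conditions: $F_\lambda(f(x_0))=c$ holds by construction; $\dis F_\lambda\le\dis g+\dis h_\lambda+(2\lambda\epsilon+\delta')<\delta$; and, given $z\in B_C(c,r-\delta)\subseteq B_C(c,r+1-\delta')$, choosing $w\in B_{X_\lambda}(x_0,r+1)$ with $d_C(h_\lambda(w),z)<\delta'$ one gets $d_{X_\lambda}(w,x_0)<r-\delta+2\delta'<r$ (since $h_\lambda$ nearly preserves distances and $h_\lambda(x_0)=c$), so $y:=f(w)$ lies in $B_{Y_\lambda}(f(x_0),r)$ and $d_C(F_\lambda(y),z)\le d_C(h_\lambda(g(f(w))),h_\lambda(w))+d_C(h_\lambda(w),z)<(2\lambda\epsilon+\delta')+\delta'<\delta$, using $d_{X_\lambda}(g(f(w)),w)\le 2\lambda\epsilon$ (the degenerate case $f(w)=f(x_0)$ being checked directly). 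Thus $B_C(c,r-\delta)$ lies in the $\delta$-neighbourhood of $F_\lambda(B_{Y_\lambda}(f(x_0),r))$, which verifies the definition of Gromov--Hausdorff convergence, so $(Y_\lambda,f(x_0))\to(C,c)$; that is, $C$ is an asymptotic cone of $Y$.

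The routine part is the bookkeeping of the $O(\lambda\epsilon)$ and $O(\delta')$ error terms. The one point that requires care is the covering condition combined with the base-point normalisation: one must ensure that a witness $w\in X_\lambda$ approximating a given $z\in B_C(c,r-\delta)$ lies in a ball small enough that $f(w)$ stays inside $B_{Y_\lambda}(f(x_0),r)$. This is precisely where one uses that $h_\lambda$ nearly preserves distances and sends $x_0$ to $c$: it pins $w$ within essentially $r-\delta$ of $x_0$, leaving a margin large enough to absorb every error once $\lambda$ is small. One may alternatively dispense with the single-point redefinition by invoking the standard fact that the condition $f_n(x_n)=x$ in the definition of Gromov--Hausdorff convergence may be relaxed to $d_X(f_n(x_n),x)\to0$.
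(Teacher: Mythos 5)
Your argument is correct. Note, though, that the paper does not actually prove this proposition: it simply records it as a consequence of~\cite{BuragoBuragoIvanov01} (the relevant facts there being the stability of pointed Gromov--Hausdorff limits under maps of small distortion and the base-point independence of the asymptotic cone), so there is no in-paper proof to compare against. What you supply is precisely the standard argument one would extract from that reference: build a quasi-inverse $g$ of the $\epsilon$-isometry $f$ (with $\dis g\le 3\epsilon$ and $d_X(x,g(f(x)))\le 2\epsilon$), observe that rescaling by $\lambda$ scales all errors to $\Orm(\lambda\epsilon)$, and compose $g$ with the approximating maps $h_\lambda$ coming from the convergence $(X_\lambda,x_0)\to(C,c)$, keeping track of the $\Orm(\lambda\epsilon)+\Orm(\delta')$ losses in the distortion and covering conditions. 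Your bookkeeping checks out: the radius enlargement $r+1$ gives enough room for $g$ to land in the domain of $h_\lambda$, the margin $r-\delta+2\delta'<r$ ensures the witness $w$ maps back into $B_{Y_\lambda}(f(x_0),r)$, and the one-point redefinition at $f(x_0)$ (or, as you note, the standard relaxation of the base-point condition to $d_C(F_\lambda(f(x_0)),c)\to0$) costs only $2\lambda\epsilon+\delta'$ in distortion. Two cosmetic remarks: the statement says ``an $\epsilon$-isometry between them'' without fixing a direction, but your quasi-inverse construction shows the two directions are interchangeable (an $\epsilon$-isometry one way yields a $3\epsilon$-isometry the other way), so assuming $f\colon X\to Y$ is harmless; and since the cone is defined through a continuum limit $\lambda\to0^+$ rather than a sequence, one should read ``for all $\lambda$ small enough'' throughout, exactly as you do.
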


    The previous \zcref[noref]{epsisoghcone}, which follows from~\cite{BuragoBuragoIvanov01}, is crucial for our study. Indeed, to provide an asymptotic cone of the manifold $M$ we will focus on a much simpler space which is $\epsilon$-isometric to $M$.

    First we fix $z\in M$ and define the following distance on $\G$:
    \begin{equation}\label{eq:orbdist}
        d_z\mleft(g,g'\mright)\coloneqq d\mleft(g(z),g'(z)\mright),\qquad \text{for $g,g'\in \G$}.
    \end{equation}
    We will refer to distances defined as in~\eqref{eq:orbdist} as \emph{orbit metrics}. Each element of $\G$ is an isometry, hence orbit metrics are \emph{invariant}, {\it i.e.}, $d_z(g,g')=d_z\mleft(0,g^{-1}g'\mright)$.

    \begin{lem}\label{MepsisoG}
        For all $z\in M$, the map
        \begin{align*}
            f_z:(\G,d_z)&\,\longrightarrow(M,d)\\
            g&\,\longmapsto g(z),
        \end{align*}
        is an $\epsilon$-isometry between $(M,d)$ and $(\G,d_z)$.
    \end{lem}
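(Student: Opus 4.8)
The plan is to verify the two defining properties of an $\epsilon$-isometry directly from the hypotheses on the $\G$-action, namely that $f_z$ has bounded distortion and that its image is $\epsilon$-dense in $(M,d)$. For the distortion, note that for $g,g'\in\G$ we have $d_X(g,g')=d_z(g,g')=d(g(z),g'(z))=d(f_z(g),f_z(g'))$ by the very definition~\eqref{eq:orbdist} of the orbit metric, so $\dis f_z=0$; in particular it is bounded by any $\epsilon>0$ we care to name. Thus the entire content of the lemma is really the $\epsilon$-density of the orbit $\G\cdot z=f_z(\G)$ in $M$.

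The key step is therefore to produce a uniform bound on $d(x,\G\cdot z)$ over all $x\in M$, and for this I would use co-compactness~\ref{Gcocomp}. First I would pass to the quotient $\whM=M/\G$, which by \zcref{thisiscov} is a compact Riemannian manifold and for which $\pi:M\to\whM$ is a covering and a local isometry. Let $R\coloneqq\diam(\whM)<\infty$, which is finite by compactness. Given any $x\in M$, set $y=\pi(x)$ and $y_0=\pi(z)$; since $d_{\whM}(y,y_0)\le R$, I would lift a minimizing (or near-minimizing) path in $\whM$ from $y$ to $y_0$ starting at $x$ — using the path-lifting property of the covering $\pi$ — to obtain a path in $M$ from $x$ to some point $z'\in\pi^{-1}(y_0)=\G\cdot z$ of length $\le R+\delta$ for any $\delta>0$ (or exactly $\le R$ if one prefers to first argue that the infimum defining $d_{\whM}$ is attained, using completeness and Hopf–Rinow on the compact manifold $\whM$). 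Because $\pi$ is a local isometry, path length is preserved under lifting, so $d(x,z')\le R$, hence $d(x,\G\cdot z)\le R$ for every $x\in M$. Consequently $f_z$ is an $\epsilon$-isometry for every $\epsilon>R$; since the notion of $\epsilon$-isometry only requires the existence of some such $\epsilon$, this proves the claim. (Equivalently, one may phrase the density bound as: the orbit $\G\cdot z$ is a net in $M$ with covering radius at most $\diam(M/\G)$.)

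The main obstacle — really the only non-formal point — is justifying the path-lifting argument rigorously, i.e.\ making sure that the diameter bound in the compact quotient genuinely transfers to a bound on $d(x,\G\cdot z)$ in $M$ upstairs. This is where one must invoke that $\pi$ is a covering map with the path-lifting property together with the fact that it is a local isometry (so that lengths, and hence the distances computed as infima of lengths of paths, behave correctly under lifting); completeness of $M$ and compactness of $\whM$ guarantee there is no issue with existence of the relevant paths. Everything else is a one-line computation. Finally, once $f_z$ is known to be an $\epsilon$-isometry between $(\G,d_z)$ and $(M,d)$, \zcref{epsisoghcone} will later let us replace $M$ by the combinatorially simpler space $(\G,d_z)$ when identifying the asymptotic cone.
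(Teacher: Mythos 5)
Your proposal is correct and follows essentially the same route as the paper: the distortion vanishes identically by the definition of the orbit metric, and co-compactness bounds the distance from any point to the orbit $\G\cdot z$ by (roughly) $\diam(M/\G)$. The only difference is that the paper simply asserts the bound $\sup_{x,x'}\inf_g d(x,g(x'))\le\diam(M/\G)$, whereas you justify it explicitly by lifting paths through the covering $\pi:M\to M/\G$, which is a harmless fleshing-out of the same argument.
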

    \begin{proof}
        Setting
        \begin{equation*}
            \epsilon\coloneqq\sup_{x,x'\in M}\inf_{g\in \G}d\mleft(x,g\mleft(x'\mright)\mright)+1\le\diam(M/\G)+1,
        \end{equation*}
        we get that for any $x\in M$ there is a $g\in \G$ so that
        \begin{equation*}
            d(f_z(g),x)=d(g(z),x)<\epsilon,
        \end{equation*}
        while
        \begin{equation*}
            \dis f_z=\max_{g,g'\in \G}\mleft|d\mleft(f_z(g),f_z\mleft(g'\mright)\mright)-d_z\mleft(g,g'\mright)\mright|=0<\epsilon.
        \end{equation*}
        These inequalities show that $f$ is an $\epsilon$-isometry.
    \end{proof}

    We recall that, see \zcref{thisiscov}\ref{en:thisiscov1}, $\G$ is isomorphic to $\Zds^b\times\Tcal$, where $b$ is the rank of $\G$ and $\Tcal$ is a torsion group. Since $\Tcal$ is finite, the next \zcref[noref]{projepsiso} is apparent.

    \begin{lem}\label{projepsiso}
        Let $b$ be the rank of $\G$ and $d_z$ be an orbit metric on $\G$. Then the projection
        \begin{equation*}
            \G\simeq\Zds^b\times\Tcal\longrightarrow\Zds^b
        \end{equation*}
        is (for some $\epsilon>0$) an $\epsilon$-isometry between $(\G,d_z)$ and $\mleft(\Zds^b,\ovd_z\mright)$, where $\ovd_z$ is the metric on $\Zds^b$ induced by this projection.
    \end{lem}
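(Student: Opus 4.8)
The plan is to make precise the claim that finiteness of the torsion group $\Tcal$ makes the projection $p:\G\simeq\Zds^b\times\Tcal\to\Zds^b$ an $\epsilon$-isometry, where $\Zds^b$ carries the quotient metric $\ovd_z$. First I would fix an orbit metric $d_z$ on $\G$ and define $\ovd_z$ on $\Zds^b$ as the pushforward under $p$: for $m,m'\in\Zds^b$, set $\ovd_z(m,m')\coloneqq\min_{t,t'\in\Tcal}d_z((m,t),(m',t'))$. Since $d_z$ is invariant under the group operation (as noted after \eqref{eq:orbdist}), this is the same as $\min_{t\in\Tcal}d_z((0,e_\Tcal),(m'-m,t))$, so $\ovd_z$ is itself an invariant (pseudo)metric on $\Zds^b$; one should check the triangle inequality, which follows from invariance and the triangle inequality for $d_z$, and that it is genuinely a metric (nondegeneracy), which holds because distinct orbit points of $z$ under distinct cosets are at positive distance, as the action is free and totally discontinuous.

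The next step is to bound the distortion of $p$. For $g=(m,t)$, $g'=(m',t')\in\G$ we have, by definition of $\ovd_z$ as a minimum over the torsion coordinates, $\ovd_z(p(g),p(g'))\le d_z(g,g')$; conversely, again by invariance, $d_z(g,g')=d_z((0,e_\Tcal),(m'-m,(t')t^{-1}))\ge \ovd_z(0,m'-m)=\ovd_z(p(g),p(g'))$ only needs the reverse, so I instead estimate $d_z(g,g')\le \ovd_z(p(g),p(g'))+D$ where
\begin{equation*}
D\coloneqq\max_{s,s'\in\Tcal}d_z\bigl((0,s),(0,s')\bigr)=\max_{s\in\Tcal}d\bigl(z,s(z)\bigr)<\infty,
\end{equation*}
the finiteness being exactly where $|\Tcal|<\infty$ enters. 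Indeed, if $\ovd_z(p(g),p(g'))=d_z((m,s),(m',s'))$ for some realizing $s,s'\in\Tcal$, then by the triangle inequality and invariance $d_z((m,t),(m',t'))\le d_z((m,t),(m,s))+d_z((m,s),(m',s'))+d_z((m',s'),(m',t'))\le 2D+\ovd_z(p(g),p(g'))$. Combining the two bounds gives $|d_z(g,g')-\ovd_z(p(g),p(g'))|\le 2D$, so $\dis p\le 2D$.

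It remains to check the surjectivity-up-to-$\epsilon$ condition: $p$ is onto $\Zds^b$, so every point of $\Zds^b$ is literally in the image, and the $\epsilon$-net condition is trivially satisfied with any $\epsilon>2D$. Hence $p$ is an $\epsilon$-isometry for $\epsilon\coloneqq 2D+1$ (or any value exceeding $2D$), which is what the lemma asserts. The only genuine content here is the finiteness of $D$, i.e.\ of $\Tcal$, so the proof is essentially the two-line distortion estimate above together with the routine verification that $\ovd_z$ is a bona fide invariant metric on $\Zds^b$; I do not anticipate a real obstacle, but the point requiring the most care is confirming that $\ovd_z$ is nondegenerate, which I would get from the covering-space action property recalled in \zcref{thisiscov}\ref{en:thisiscov1} (distinct elements of $\G$ send $z$ to distinct, hence positively separated, points of $M$, and a minimum over the finite set $\Tcal$ of positive numbers is positive).
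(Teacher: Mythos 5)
Your proof is correct and follows the argument the paper has in mind: the paper states the lemma as ``apparent'' from the finiteness of $\Tcal$, and your distortion bound $\dis p\le 2D$ with $D=\max_{s\in\Tcal}d(z,s(z))$, together with surjectivity of the projection and the routine check that the induced $\ovd_z$ is an invariant metric (nondegenerate by freeness of the action), is exactly the elaboration of that remark. The only point worth noting is that the paper's later use of $\ovd_z$ (as an orbit metric of the $\Zds^b$ factor) may differ from your fiberwise-minimum definition, but only by an additive constant at most $2D$, which changes nothing in the $\epsilon$-isometry conclusion or its later applications.
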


    Moreover:

    \begin{lem}\label{equivorbit}
        Given $z\in M$, $\ovd_z$ is bi-Lipschitz equivalent to the Euclidean norm $|\cdot|$ on $\Zds^b$, namely there is a constant $A>0$ so that
        \begin{equation}\label{eq:equivorbit.1}
            A^{-1}\mleft|h-h'\mright|\le\ovd_z\mleft(h,h'\mright)\le A\mleft|h-h'\mright|,\qquad \text{for every $h,h'\in\Zds^b$}.
        \end{equation}
    \end{lem}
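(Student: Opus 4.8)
The plan is to exploit the translation-invariance of the orbit metric. Since every element of $\G$ acts by isometries, $d_z(g,g')=d_z(e,g^{-1}g')$, and this invariance descends through the projection $p\colon\G\simeq\Zds^b\times\Tcal\to\Zds^b$: writing $\Zds^b$ additively, the induced (quotient) metric satisfies $\ovd_z(h,h')=\ovd_z(0,h'-h)=:\bar\ell(h'-h)$, where $\bar\ell(h)=\min_{t\in\Tcal}\ell(h t)$ (a minimum, since $\Tcal$ is finite) and $\ell(g):=d(z,g(z))$ is a length function on $\G$: it is symmetric, $\ell(g)=\ell(g^{-1})$; subadditive, $\ell(gg')\le\ell(g)+\ell(g')$, because $d(z,gg'(z))\le d(z,g(z))+d(g(z),gg'(z))=\ell(g)+\ell(g')$; and strictly positive off $e$ by \zcref{Gfree}. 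It therefore suffices to produce $A>0$ with $A^{-1}|h|\le\bar\ell(h)\le A|h|$ for all $h\in\Zds^b$.

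The upper bound follows immediately from subadditivity. Fix $g_1,\dots,g_b\in\G$ with $p(g_i)=e_i$, the $i$-th standard basis vector. For $h=(a_1,\dots,a_b)$ the element $g_1^{a_1}\cdots g_b^{a_b}$ lies in the fiber $p^{-1}(h)$, so $\bar\ell(h)\le\ell(g_1^{a_1}\cdots g_b^{a_b})\le\sum_i|a_i|\,\ell(g_i)\le C\,|h|_1\le C\sqrt{b}\,|h|$, with $C=\max_i\ell(g_i)$.

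The lower bound is the heart of the matter: it encodes that the action is \emph{uniformly proper}, and this is where the geometry enters. By the Hopf--Rinow theorem $M$ is a proper geodesic space, and by \zcref{Gcocomp} there is $R_0>0$ with $\bigcup_{g\in\G}g\bigl(B(z,R_0)\bigr)=M$ (one may take $R_0=\diam(M/\G)+1$, cf.\ the proof of \zcref{MepsisoG}). Given $g\in\G$, take a unit-speed minimizing geodesic from $z$ to $g(z)$, cut it into $N=\lceil\ell(g)/R_0\rceil$ arcs of length $\le R_0$, and choose orbit points $h_k(z)$ within $R_0$ of the $k$-th division point, with $h_0=e$ and $h_N=g$; then $\ell(h_k^{-1}h_{k+1})=d(h_k(z),h_{k+1}(z))<3R_0$, so each $s_k:=h_k^{-1}h_{k+1}$ lies in $S:=\{h\in\G:\ell(h)<3R_0\}$, which is \emph{finite} because the orbit $\G z$ is discrete (by \zcref{Gtdisc}) and closed, $\overline{B}(z,3R_0)$ is compact, and the action is free. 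Writing $g=s_0s_1\cdots s_{N-1}$ and projecting, $|p(g)|\le\sum_k|p(s_k)|\le N\max_{s\in S}|p(s)|$. Since $\ell_{\min}:=\min_{h\ne e}\ell(h)>0$ (again by discreteness and freeness), one has $N\le\ell(g)\bigl(R_0^{-1}+\ell_{\min}^{-1}\bigr)$ for every $g\ne e$ (and $|p(e)|=0$ trivially), whence $|p(g)|\le C'\,\ell(g)$ for a constant $C'=C'(z)$. Applying this to a minimizing representative of each fiber gives $|h|\le C'\,\bar\ell(h)$, and $A:=\max\{C\sqrt{b},C'\}$ concludes.

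The only genuine obstacle is this last step: subadditivity and positivity alone do \emph{not} force a linear lower bound (for instance $h\mapsto\log(1+|h|)$ is subadditive, symmetric and positive off $0$), so one really needs the \v{S}varc--Milnor-type input---properness of $M$ together with the discreteness and co-compactness of the action---to control the Euclidean size of $p(g)$ by a multiple of $\ell(g)$. The remaining work is routine bookkeeping with the finite torsion part $\Tcal$ and the equivalence of the $\ell^1$ and $\ell^2$ norms on $\Zds^b$.
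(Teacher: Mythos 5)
Your proof is correct, but it follows a genuinely different route from the paper's. The paper disposes of the torsion part by restricting the action to the free part $\Zds^b\le\G$, invoking Proposition~\ref{Gtorsion} (applied to the finite group $\Tcal$ acting on $M/\Zds^b$) to conclude that this restricted action is still co-compact, and then quoting \cite[Theorem~8.3.19]{BuragoBuragoIvanov01}, which is precisely the statement that an orbit metric of a free, totally discontinuous, co-compact isometric action of $\Zds^b$ is bi-Lipschitz equivalent to the Euclidean norm. You instead absorb the torsion by minimizing the length function $\ell(g)=d(z,g(z))$ over the finite fibers of the projection $\G\simeq\Zds^b\times\Tcal\to\Zds^b$, and then reprove the content of the cited theorem from scratch: the subadditivity argument for the upper bound and the \v{S}varc--Milnor chaining argument for the lower bound (Hopf--Rinow properness of $M$, covering of $M$ by the orbit of a ball of radius $R_0$ coming from \ref{Gcocomp}, finiteness of $S=\{\ell<3R_0\}$ and positivity of $\ell_{\min}$ coming from \ref{Gfree} and \ref{Gtdisc}) are exactly the standard proof of that result, and your closing remark correctly identifies why subadditivity and positivity alone cannot yield the linear lower bound. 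What your version buys is a self-contained argument that makes explicit where completeness and each of the hypotheses \ref{Gfree}--\ref{Gcocomp} enter; what the paper's version buys is brevity, at the cost of outsourcing the geometric core to the reference. The only point worth flagging is interpretive rather than mathematical: you read $\ovd_z$ as the fiber-minimum metric $\ovd_z(h,h')=\min_{t\in\Tcal}\ell\bigl((h'-h,t)\bigr)$, whereas the paper (in \zcref{projepsiso}) only says ``the metric induced by the projection''; any two natural readings differ by an additive constant controlled by the finite $\Tcal$-orbit of $z$, and since all these metrics are uniformly discrete such a discrepancy is harmless in a bi-Lipschitz statement, so this does not affect the validity of your argument.
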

    \begin{proof}
        If $b=0$ our claim is trivial, thus we assume that $b>0$.\\
        We start observing that $\G/\Tcal$, and hence $\Zds^b$, acts on $M$ by isometries and is free. Moreover, we have that $\Tcal$ acts by isometries on $M/(\G/\Tcal)$ and
        \begin{equation*}
            M/\G=(M/(\G/\Tcal))/\Tcal
        \end{equation*}
        is compact, thus $\Tcal$ is free, totally discontinuous and co-compact. It follows from \zcref{Gtorsion} that $M/(\G/\Tcal)$ is compact and consequently $\G/\Tcal$ is co-compact. Finally, thanks to~\cite[Theorem~8.3.19]{BuragoBuragoIvanov01}, we get that every orbit metric $\ovd_z$ on $\G/\Tcal$ is bi-Lipschitz equivalent to the Euclidean norm, thus proving~\eqref{eq:equivorbit.1}.
    \end{proof}

    Clearly $\ovd_z$ is an invariant metric on $\Zds^b$, hence~\cite[Proposition~8.5.3]{BuragoBuragoIvanov01} yields that there is a unique continuous seminorm $\|\cdot\|$ on $\Rds^b$ such that
    \begin{equation}\label{eq:orbdist2snorm}
        \lim_{n\to\infty}\frac{\ovd_z(0,nh)}n=\|h\|,\qquad \text{for any }h\in\Zds^b.
    \end{equation}

    \begin{prop}
        Given $z\in M$, $\|\cdot\|$ is a norm on $\Rds^b$.
    \end{prop}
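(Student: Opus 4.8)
The plan is to show that the seminorm $\|\cdot\|$ furnished by~\eqref{eq:orbdist2snorm} is in fact positive definite, which combined with the already-established facts (continuity, homogeneity, subadditivity) immediately yields that it is a norm. The whole point will be to transfer the lower bound from \zcref{equivorbit} to the asymptotic limit.

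First I would fix $z\in M$ and let $A>0$ be the constant in~\eqref{eq:equivorbit.1}, so that $\ovd_z(0,nh)\ge A^{-1}|nh|=A^{-1}n\,|h|$ for every $h\in\Zds^b$ and every $n\in\Nds$. Dividing by $n$ and letting $n\to\infty$ in~\eqref{eq:orbdist2snorm} gives
\begin{equation*}
    \|h\|\ge A^{-1}\,|h|,\qquad \text{for all }h\in\Zds^b.
\end{equation*}
Next I would extend this inequality from the lattice to all of $\Rds^b$. For $h\in\Qds^b$, write $h=p/q$ with $p\in\Zds^b$ and $q\in\Nds$; positive homogeneity of the seminorm gives $\|h\|=q^{-1}\|p\|\ge q^{-1}A^{-1}|p|=A^{-1}|h|$. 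Since $\|\cdot\|$ is continuous on $\Rds^b$ and $\Qds^b$ is dense, the inequality $\|h\|\ge A^{-1}|h|$ holds for every $h\in\Rds^b$. In particular $\|h\|>0$ whenever $h\ne0$, so $\|\cdot\|$ is a genuine norm.

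There is really no serious obstacle here; the only point requiring a little care is that $\Zds^b$ is not dense in $\Rds^b$, so the passage from the lattice estimate to the Euclidean one must go through $\Qds^b$ and positive homogeneity before invoking continuity. Everything else is a direct consequence of \zcref{equivorbit} and the defining limit~\eqref{eq:orbdist2snorm}.
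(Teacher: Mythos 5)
Your proof is correct and follows essentially the paper's own route: both arguments transfer the bi-Lipschitz estimate of \zcref{equivorbit} through the defining limit~\eqref{eq:orbdist2snorm} to obtain the bound on $\Zds^b$, and then extend it to all of $\Rds^b$ using the continuity of the seminorm. The only cosmetic difference is that the paper passes from the lattice to $\Rds^b$ via the floor-function approximations $\Phi_\varepsilon$ of~\eqref{eq:stabnorm2} and records both inequalities in~\eqref{eq:stabnorm1}, whereas you go through $\Qds^b$ by homogeneity and density, keeping only the lower bound, which is indeed all that positive definiteness requires.
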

    \begin{proof}
        It is enough to show that
        \begin{equation}\label{eq:stabnorm1}
            A^{-1}|h|\le\|h\|\le A|h|,\qquad \text{for any $h\in\Rds^b$},
        \end{equation}
        where $|\cdot|$ is the Euclidean norm on $\Rds^b$. Let $\{h_i\}_{i=1}^b$ be a basis of $\Zds^b$ and define, for every $\varepsilon>0$, the functions $\Phi_\varepsilon:\Rds^b\to\Zds^b$ such that, for any $h\coloneqq\sum\limits_{i=1}^b a_i h_i$,
        \begin{equation}\label{eq:stabnorm2}
            \Phi_\varepsilon(h)\coloneqq\sum_{i=1}^b\mleft\lfloor\frac{a_i}\varepsilon\mright\rfloor h_i.
        \end{equation}
        It is apparent that
        \begin{equation}\label{eq:stabnorm3}
            \lim_{\varepsilon\to0^+}\varepsilon \Phi_\varepsilon(h)=h,\qquad \text{for all $h\in\Rds^b$}.
        \end{equation}
        We know from \zcref{equivorbit} that, for some $A>0$,
        \begin{equation*}
            A^{-1}|h|\le\ovd_z(0,h)\le A|h|,\qquad \text{for any $h\in\Zds^b$},
        \end{equation*}
        thus, by~\eqref{eq:orbdist2snorm},
        \begin{equation*}
            A^{-1}|h|\le\|h\|\le A|h|,\qquad \text{for any $h\in\Zds^b$},
        \end{equation*}
        which in turn shows that
        \begin{equation*}
            A^{-1}|\Phi_\varepsilon(h)|\le\|\Phi_\varepsilon(h)\|\le A|\Phi_\varepsilon(h)|,\qquad \text{for every $h\in\Rds^b$ and $\varepsilon>0$}.
        \end{equation*}
        This, \zcref{eq:stabnorm3} and the continuity of the seminorm prove~\eqref{eq:stabnorm1}.
    \end{proof}

    \begin{rem}\label{allnormequiv}
        The norm $\|\cdot\|$ is usually called the \emph{stable norm} of the metric $\ovd_z$. However, all norms are equivalent on finite dimensional vector spaces and we do not need to refer to a specific norm in our analysis, therefore, for simplicity, we can assume that $\|\cdot\|$ is the Euclidean norm.
    \end{rem}

    The following \zcref[noref]{conemac} is a straightforward consequence of~\cite[Theorem~8.5.4]{BuragoBuragoIvanov01}, \zcref{MepsisoG,projepsiso,compepsiso,epsisoghcone}.

    \begin{theo}\label{conemac}
        Let $b$ be the rank of $\G$, then $\mleft(\Rds^b,\|\cdot\|\mright)$ is the asymptotic cone of $M$.
    \end{theo}

    Notice that, consistently with \zcref{asconedef,Gtorsion}, if $M$ is compact then its asymptotic cone is just a point.

    Henceforth $\mleft(\Rds^b,\|\cdot\|\mright)$ will always denote the asymptotic cone of $M$.

    \subsection{Convergence of Functions}\label{sec:3.2}

    We need to introduce a notion of convergence of functions from the rescaled metric spaces $M_\varepsilon$ to the asymptotic cone $\Rds^b$. To do so we will use the identification $\G\simeq\Zds^b\times\Tcal$ and a map $\Phi_\varepsilon:\Rds^b\to\Zds^b$ so that
    \begin{equation}\label{eq:phiepsconv}
        \lim_{\varepsilon\to0^+}\|\varepsilon\Phi_\varepsilon(h)-h\|\le C_\varepsilon,\qquad \text{for any $h\in\Rds^b$},
    \end{equation}
    where $\{C_\varepsilon\}_{\varepsilon>0}$ is an infinitesimal sequence of positive numbers, see for instance~\eqref{eq:stabnorm2}. More in details, when convenient every $g\in \G$ will be represented as a $(h,k)\in\Zds^b\times\Tcal$, while $\Phi_\varepsilon$ will be used to relate $\Rds^b$ to $\G$.

    \begin{defin}\label{coneconvdef}
        Given a function $f:\Rds^b\to\Rds$ and, for every $\varepsilon>0$, the maps $f_\varepsilon:M_\varepsilon\to\Rds$ we will say that
        \begin{mylist}
            \item $f_\varepsilon$ \emph{$\varepsilon\Phi_\varepsilon$-converges pointwise} to $f$ if
            \begin{equation*}
                \lim_{\varepsilon\to0^+}f_\varepsilon((\Phi_\varepsilon(h),k)(x))=f(h),\qquad \text{for any $h\in\Rds^b$, $k\in\Tcal$ and $x\in M_\varepsilon$};
            \end{equation*}
            \item $f_\varepsilon$ \emph{$\varepsilon\Phi_\varepsilon$-converges locally uniformly} to $f$ if, for each compact subset $K$ of $\Rds^b$,
            \begin{equation*}
                \lim_{\varepsilon\to0^+}\sup_{h\in K}|f_\varepsilon((\Phi_\varepsilon(h),k)(x))-f(h)|=0,\qquad \text{for any $k\in\Tcal$ and $x\in M_\varepsilon$}.
            \end{equation*}
        \end{mylist}
    \end{defin}

    As usual for this kind of problems, it is useful to rely on some sort of equicontinuity notion.

    \begin{defin}\label{eqcontdef}
        Given the functions $f_\varepsilon:M_\varepsilon\to\Rds$ we will say that
        \begin{mylist}
            \item the functions $f_\varepsilon$ are \emph{$d_\varepsilon$-equicontinuous} if for each $x_0\in M$, $r>0$ and $\epsilon>0$ there exists a $\delta>0$ such that
            \begin{equation*}
                \mleft|f_\varepsilon(x)-f_\varepsilon\mleft(x'\mright)\mright|<\epsilon,\qquad \text{for any $\varepsilon>0$ and $x,x'\in B_\varepsilon(x_0,r)\subseteq M_\varepsilon$ with $d_\varepsilon(x,x')<\delta$};
            \end{equation*}
            \item the functions $f_\varepsilon$ are \emph{uniformly $d_\varepsilon$-equicontinuous} if for each $\epsilon>0$ there exists a $\delta>0$ such that
            \begin{equation*}
                \mleft|f_\varepsilon(x)-f_\varepsilon\mleft(x'\mright)\mright|<\epsilon,\qquad \text{for any $\varepsilon>0$ and $x,x'\in M_\varepsilon$ with $d_\varepsilon(x,x')<\delta$};
            \end{equation*}
            \item the functions $f_\varepsilon$ are \emph{$d_\varepsilon$-equiLipschitz continuous} if there exists an $\ell>0$ such that
            \begin{equation*}
                \mleft|f_\varepsilon(x)-f_\varepsilon\mleft(x'\mright)\mright|\le\ell d_\varepsilon\mleft(x,x'\mright),\qquad \text{for any $\varepsilon>0$ and $x,x'\in M_\varepsilon$};
            \end{equation*}
            \item the functions $f_\varepsilon$ are \emph{locally $d_\varepsilon$-equiLipschitz continuous} if for each $x_0\in M$ and $r>0$ there exists an $\ell>0$ such that
            \begin{equation*}
                \mleft|f_\varepsilon(x)-f_\varepsilon\mleft(x'\mright)\mright|\le\ell d_\varepsilon\mleft(x,x'\mright),\qquad \text{for any $\varepsilon>0$ and $x,x'\in B_\varepsilon(x_0,r)\subseteq M_\varepsilon$}.
            \end{equation*}
        \end{mylist}
    \end{defin}

    We close this \zcref[noref,nocap]{conesec} with some useful results about the relationship between local equicontinuity and convergence in the sense of \zcref{coneconvdef,eqcontdef}.

    \begin{lem}\label{eqcont}
        Let $f_\varepsilon:M_\varepsilon\to\Rds$ be a $d_\varepsilon$-equicontinuous sequence of maps and $\{h_\varepsilon\}_{\varepsilon>0}\subseteq\Zds^b$ be a sequence such that, for some $h\in\Rds^b$, $\varepsilon h_\varepsilon\to h$ as $\varepsilon\to0^+$. We have that
        \begin{equation}\label{eq:eqcont.1}
            \lim_{\varepsilon\to0^+}\mleft|f_\varepsilon((\Phi_\varepsilon(h),k)(x))-f_\varepsilon\mleft(\mleft(h_\varepsilon,k'\mright)\mleft(x'\mright)\mright)\mright|=0,\qquad \text{for any $k,k'\in\Tcal$ and $x,x'\in M$}.
        \end{equation}
    \end{lem}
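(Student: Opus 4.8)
The plan is to reduce the claim to two observations: that the two arguments $(\Phi_\varepsilon(h),k)(x)$ and $(h_\varepsilon,k')(x')$ live in a common ball $B_\varepsilon(x_0,r)$ whose radius $r$ can be chosen uniformly in $\varepsilon$, and that their $d_\varepsilon$-distance tends to $0$; then $d_\varepsilon$-equicontinuity on that ball finishes the argument. First I would fix $k,k'\in\Tcal$ and $x,x'\in M$, pick a base point $z\in M$ and set $x_0=z$. Since $\Tcal$ is finite, the quantities $d(z,k(x))$ and $d(z,k'(x'))$ are bounded by a constant independent of $\varepsilon$; more importantly, using that $\G$ acts by isometries and the orbit metric $d_z$ together with \zcref{equivorbit} (bi-Lipschitz equivalence of $\ovd_z$ and the Euclidean norm on $\Zds^b$), one gets
\begin{equation*}
    d\bigl((\Phi_\varepsilon(h),k)(z),z\bigr)\le A\,|\Phi_\varepsilon(h)|+C,\qquad d\bigl((h_\varepsilon,k')(z),z\bigr)\le A\,|h_\varepsilon|+C
\end{equation*}
for a constant $C$ depending only on $\Tcal$ and on $d(z,x),d(z,x')$. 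Multiplying by $\varepsilon$ and using $\varepsilon\Phi_\varepsilon(h)\to h$ (from~\eqref{eq:phiepsconv} together with \zcref{allnormequiv}) and $\varepsilon h_\varepsilon\to h$, both rescaled distances stay bounded, say by some $R$; hence for all small $\varepsilon$ both points lie in $B_\varepsilon(z,r)$ with $r\coloneqq R+1$, a radius independent of $\varepsilon$.

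Next I would estimate $d_\varepsilon$ between the two points. By the triangle inequality,
\begin{equation*}
    d_\varepsilon\bigl((\Phi_\varepsilon(h),k)(x),(h_\varepsilon,k')(x')\bigr)\le\varepsilon\,d\bigl((\Phi_\varepsilon(h),k)(x),(\Phi_\varepsilon(h),k)(x')\bigr)+\varepsilon\,d\bigl((\Phi_\varepsilon(h),k)(x'),(h_\varepsilon,k')(x')\bigr).
\end{equation*}
The first term equals $\varepsilon\,d(k(x),k(x'))=\varepsilon\,d(x,x')\to0$ since each group element is an isometry. For the second term, write it as $\varepsilon\,d_{k(x')}\bigl((\Phi_\varepsilon(h),e),(h_\varepsilon,k'k^{-1})\bigr)$ using invariance of orbit metrics, and bound it above, via \zcref{projepsiso} and \zcref{equivorbit}, by $\varepsilon\bigl(A\,|\Phi_\varepsilon(h)-h_\varepsilon|+C'\bigr)$ for a constant $C'$ absorbing the torsion part; since $\varepsilon\Phi_\varepsilon(h)\to h$ and $\varepsilon h_\varepsilon\to h$, we have $\varepsilon|\Phi_\varepsilon(h)-h_\varepsilon|\to0$, and the $\varepsilon C'$ term is trivially infinitesimal. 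Therefore the whole $d_\varepsilon$-distance tends to $0$.

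Finally, given $\epsilon>0$, apply $d_\varepsilon$-equicontinuity with the data $x_0=z$, the radius $r$ fixed above, and tolerance $\epsilon$ to obtain $\delta>0$; for all $\varepsilon$ small enough the two points lie in $B_\varepsilon(z,r)$ and are at $d_\varepsilon$-distance $<\delta$, whence $|f_\varepsilon((\Phi_\varepsilon(h),k)(x))-f_\varepsilon((h_\varepsilon,k')(x'))|<\epsilon$. This proves~\eqref{eq:eqcont.1}. The main obstacle is purely bookkeeping: one must be careful that the ball radius $r$ can be chosen independently of $\varepsilon$ (this is what makes the local equicontinuity hypothesis applicable), which is exactly where the convergences $\varepsilon\Phi_\varepsilon(h)\to h$ and $\varepsilon h_\varepsilon\to h$ and the finiteness of $\Tcal$ are used in tandem; once that is in place, the distance estimate and the equicontinuity are routine.
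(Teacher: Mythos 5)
Your proposal is correct and follows essentially the same route as the paper: show that the two points $(\Phi_\varepsilon(h),k)(x)$ and $(h_\varepsilon,k')(x')$ are at $d_\varepsilon$-distance tending to $0$ (and within a ball of radius independent of $\varepsilon$), then invoke $d_\varepsilon$-equicontinuity. The only difference is that the paper obtains the distance estimate by directly citing \zcref{normconebound}, whereas you rederive the same bound by hand from \zcref{projepsiso,equivorbit} and spell out the uniform-ball bookkeeping that the paper leaves implicit.
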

    \begin{proof}
        \zcref[S]{normconebound}, yields that there is a constant $A$ such that
        \begin{equation*}
            d_\varepsilon\mleft((\Phi_\varepsilon(h),k)(x),\mleft(h_\varepsilon,k'\mright)\mleft(x'\mright)\mright)\le\varepsilon A(\|\Phi_\varepsilon(h)-h_\varepsilon\|+1),\qquad \text{for any $k,k'\in\Tcal$ and $x,x'\in M$}.
        \end{equation*}
        The quantity on the right-hand side clearly converges to 0 as $\varepsilon\to0^+$, therefore, since the $f_\varepsilon$ are $d_\varepsilon$-equicontinuous, for any $n\in\Nds$ there is an $\varepsilon_n>0$ so that
        \begin{equation*}
            \mleft|f_\varepsilon((\Phi_\varepsilon(h),k)(x))-f_\varepsilon\mleft(\mleft(h_\varepsilon,k'\mright)\mleft(x'\mright)\mright)\mright|<\frac1n,\qquad \text{for any $k,k'\in\Tcal$, $x,x'\in M$ and $\varepsilon<\varepsilon_n$}.
        \end{equation*}
        This proves~\eqref{eq:eqcont.1}.
    \end{proof}

    \begin{prop}\label{equicontconvcont}
        Let $f_\varepsilon:M_\varepsilon\to\Rds$ be a $d_\varepsilon$-equicontinuous sequence of maps $\varepsilon\Phi_\varepsilon$-converging pointwise to $f:\Rds^b\to\Rds$. Then, $f$ is continuous.
    \end{prop}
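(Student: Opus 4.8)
The plan is a straightforward three‑$\epsilon$ argument, exploiting that for a single small scale $\varepsilon$ the values of one map $f_\varepsilon$ at suitably lifted points simultaneously approximate $f(h)$ and $f(h_0)$. By \zcref{allnormequiv} we identify $\|\cdot\|$ with the Euclidean norm on $\Rds^b$. Fix $h_0\in\Rds^b$ and $\epsilon>0$, fix once and for all a point $x\in M$, and let $k\in\Tcal$ be the identity element, so that $(0,k)(x)=x$; by the assumed pointwise $\varepsilon\Phi_\varepsilon$‑convergence, $f_\varepsilon((\Phi_\varepsilon(h),k)(x))\to f(h)$ as $\varepsilon\to0^+$ for every $h\in\Rds^b$.

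First I would confine everything to one fixed ball of $M_\varepsilon$. By \zcref{normconebound} (the estimate used in the proof of \zcref{eqcont}) there is a constant $A>0$ such that, for all $h,h'\in\Rds^b$,
\begin{equation*}
    d_\varepsilon\mleft((\Phi_\varepsilon(h),k)(x),(\Phi_\varepsilon(h'),k)(x)\mright)\le\varepsilon A\mleft(\mleft\|\Phi_\varepsilon(h)-\Phi_\varepsilon(h')\mright\|+1\mright).
\end{equation*}
Choosing $h'$ with $\Phi_\varepsilon(h')=0$ and using \eqref{eq:phiepsconv} (so that $\varepsilon\|\Phi_\varepsilon(h)\|\to\|h\|$, uniformly for $h$ in a bounded set), one obtains $R>0$ and $\varepsilon_0>0$ with $(\Phi_\varepsilon(h),k)(x)\in B_\varepsilon(x,R)$ for every $\varepsilon\le\varepsilon_0$ and every $h$ with $\|h-h_0\|\le1$. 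I would then apply the $d_\varepsilon$‑equicontinuity of $\{f_\varepsilon\}$ at the point $x$, with radius $R$ and tolerance $\epsilon$, to obtain $\delta_0>0$ such that $|f_\varepsilon(y)-f_\varepsilon(y')|<\epsilon$ whenever $\varepsilon>0$ and $y,y'\in B_\varepsilon(x,R)$ with $d_\varepsilon(y,y')<\delta_0$.

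Then, setting $\delta\coloneqq\min\{1,\delta_0/(2A)\}$ and taking $h$ with $\|h-h_0\|<\delta$, \eqref{eq:phiepsconv} gives $\varepsilon\|\Phi_\varepsilon(h)-\Phi_\varepsilon(h_0)\|\to\|h-h_0\|$, so for all sufficiently small $\varepsilon$ (in particular $\varepsilon\le\varepsilon_0$) the displayed inequality yields $d_\varepsilon((\Phi_\varepsilon(h),k)(x),(\Phi_\varepsilon(h_0),k)(x))<A\|h-h_0\|+\delta_0/2<\delta_0$, hence $|f_\varepsilon((\Phi_\varepsilon(h),k)(x))-f_\varepsilon((\Phi_\varepsilon(h_0),k)(x))|<\epsilon$. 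Shrinking $\varepsilon$ further so that $|f_\varepsilon((\Phi_\varepsilon(h),k)(x))-f(h)|<\epsilon$ and $|f_\varepsilon((\Phi_\varepsilon(h_0),k)(x))-f(h_0)|<\epsilon$ (pointwise convergence), the triangle inequality gives $|f(h)-f(h_0)|<3\epsilon$. As $\epsilon>0$ and $h_0$ were arbitrary, $f$ is continuous.

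The only genuine subtlety — the place I expect to need care — is the uniformity used in the first step: the radius $R$, and hence the $\delta_0$ coming from local equicontinuity, must be chosen independently of $h$ in the range $\|h-h_0\|\le1$. This is exactly why one should invoke \eqref{eq:phiepsconv} with an $h$‑independent infinitesimal $C_\varepsilon$ (for instance the map in \eqref{eq:stabnorm2}), rather than bare pointwise convergence $\varepsilon\Phi_\varepsilon(h)\to h$. Everything else is routine: the limit $\varepsilon\to0^+$ is performed only after $x,k,h_0,h,\epsilon$ have been fixed. Alternatively, one could argue by contradiction, feeding a sequence $\eta_n\to h_0$ into \zcref{eqcont} after interpolating the scales $\varepsilon$, but the direct estimate above seems cleaner.
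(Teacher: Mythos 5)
Your argument is correct and follows essentially the same route as the paper's proof: confine the lifted points $(\Phi_\varepsilon(h),k)(x)$ to a fixed ball via \zcref{normconebound}, invoke the local $d_\varepsilon$-equicontinuity on that ball to get a $\delta_\epsilon$, bound the displacement between the lifts of nearby $h,h_0$ by the same lemma together with \eqref{eq:phiepsconv}, and finish with the three-$\epsilon$ triangle inequality using pointwise $\varepsilon\Phi_\varepsilon$-convergence (a step the paper leaves implicit but you spell out). No gaps; your remark on the $h$-independence of $C_\varepsilon$ is the same uniformity the paper tacitly uses.
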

    \begin{proof}
        We fix a constant $r>0$, $x_0\in M$ and a $h\in\Rds^b$ so that $\|h\|<r$, then we get from \zcref{normconebound} that, for $\varepsilon>0$ small enough, there is a constant $\ovr$ such that
        \begin{equation*}
            d_\varepsilon(x_0,(\Phi_\varepsilon(h),0)(x_0))<\ovr.
        \end{equation*}
        Given an $\epsilon>0$, the $d_\varepsilon$-equicontinuity of $\{f_\varepsilon\}$ yields that there is a $\delta_\epsilon>0$ so that
        \begin{equation}\label{eq:equicontconvcont1}
            \mleft|f_\varepsilon(x)-f_\varepsilon\mleft(x'\mright)\mright|<\epsilon,\qquad \text{for any $\varepsilon>0$ and $x,x'\in B_\varepsilon(x_0,\ovr)\subseteq M_\varepsilon$ with $d_\varepsilon(x,x')<\delta_\epsilon$}.
        \end{equation}
        Finally, \zcref{normconebound} shows that there exists $\delta>0$ so that, whenever $\|h-h'\|<\delta$,
        \begin{equation*}
            d_\varepsilon\mleft(x_0,\mleft(\Phi_\varepsilon\mleft(h'\mright),0\mright)(x_0)\mright)<\ovr,\quad \text{and}\quad d_\varepsilon\mleft((\Phi(h),0)(x_0),\mleft(\Phi_\varepsilon\mleft(h'\mright),0\mright)(x_0)\mright)<\delta_\epsilon;
        \end{equation*}
        hence, by~\eqref{eq:equicontconvcont1}, for any $h'\in\Rds^b$ with $|h-h'|<\delta$ and $\varepsilon>0$ small enough
        \begin{equation*}
            \mleft|f_\varepsilon((\Phi_\varepsilon(h),0)(x_0))-f_\varepsilon\mleft(\mleft(\Phi_\varepsilon\mleft(h'\mright),0\mright)(x_0)\mright)\mright|<\epsilon.
        \end{equation*}
        Since $\epsilon$ and $h$ are arbitrary and $f_\varepsilon$ $\varepsilon\Phi_\varepsilon$-converges pointwise to $f$, this concludes the proof.
    \end{proof}

    \begin{rem}
        In addition, arguing as in \zcref{equicontconvcont}, we get that if $\{f_\varepsilon\}$ $\varepsilon\Phi_\varepsilon$-converges pointwise to $f$ and is $d_\varepsilon$-uniformly continuous, locally $d_\varepsilon$-equiLipschitz continuous or $d_\varepsilon$-equiLipschitz continuous, then $f$ is uniformly continuous, locally Lipschitz continuous or Lipschitz continuous, respectively.
    \end{rem}

    \begin{prop}\label{equicontconvlocunif}
        Let $f_\varepsilon:M_\varepsilon\to\Rds$ be a $d_\varepsilon$-equicontinuous sequence of maps $\varepsilon\Phi_\varepsilon$-converging pointwise to $f:\Rds^b\to\Rds$. Then, $f_\varepsilon$ locally uniformly $\varepsilon\Phi_\varepsilon$-converges to $f$.
    \end{prop}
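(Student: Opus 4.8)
The plan is to upgrade pointwise convergence to local uniform convergence by the classical Arzelà–Ascoli mechanism, adapted to the fact that here the domains $M_\varepsilon$ vary with $\varepsilon$ (the argument runs parallel to that of \zcref{equicontconvcont}): I would cover a prescribed compact subset of $\Rds^b$ by finitely many small balls, use $d_\varepsilon$-equicontinuity to bound, uniformly in $\varepsilon$, the oscillation of $f_\varepsilon$ between $(\Phi_\varepsilon(h),k)(x)$ and $(\Phi_\varepsilon(h_i),k)(x)$ when $h$ lies close to a net point $h_i$, and invoke the hypothesis only through pointwise $\varepsilon\Phi_\varepsilon$-convergence at the finitely many centers. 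By \zcref{allnormequiv} I may assume that $\|\cdot\|$ is the Euclidean norm.

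Fix a compact $K\subseteq\Rds^b$, $k\in\Tcal$, $x\in M$ and $\epsilon>0$. The first preliminary is to confine all points in play to a single ball: combining \zcref{normconebound} with the uniform bound $\|\varepsilon\Phi_\varepsilon(h)-h\|\le C_\varepsilon$, $C_\varepsilon\to0$, from~\eqref{eq:phiepsconv}, one finds $\ovr>0$ with $(\Phi_\varepsilon(h),k)(x)\in B_\varepsilon(x,\ovr)$ for all $h\in K$ and all sufficiently small $\varepsilon$, since $d_\varepsilon\bigl(x,(\Phi_\varepsilon(h),k)(x)\bigr)\le A(\|\varepsilon\Phi_\varepsilon(h)\|+\varepsilon)\le A\bigl(\sup_{h\in K}\|h\|+C_\varepsilon+\varepsilon\bigr)$, where $A$ denotes the constant furnished by \zcref{normconebound}. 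The second preliminary turns equicontinuity into a modulus: \zcref{eqcontdef} applied with base point $x$, radius $\ovr$ and threshold $\epsilon/3$ yields $\delta>0$ such that $|f_\varepsilon(y)-f_\varepsilon(y')|<\epsilon/3$ whenever $y,y'\in B_\varepsilon(x,\ovr)$ and $d_\varepsilon(y,y')<\delta$, valid for every $\varepsilon>0$. The third preliminary uses that $f$ is continuous — this is \zcref{equicontconvcont} — hence uniformly continuous on the compact $K$: I choose $\rho>0$ with $|f(h)-f(h')|<\epsilon/3$ for $h,h'\in K$, $|h-h'|<\rho$, and moreover $A\rho<\delta/2$, and I fix a finite $\rho$-net $h_1,\dots,h_N\in K$ of $K$.

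Then pointwise convergence gives, for each $i$, some $\varepsilon_i>0$ with $|f_\varepsilon((\Phi_\varepsilon(h_i),k)(x))-f(h_i)|<\epsilon/3$ for $\varepsilon<\varepsilon_i$, and I take $\varepsilon_0>0$ with $\varepsilon_0\le\min_i\varepsilon_i$, small enough that $A(2C_\varepsilon+\varepsilon)<\delta/2$ for $\varepsilon<\varepsilon_0$ and that the ball confinement above holds. For $\varepsilon<\varepsilon_0$ and $h\in K$, choosing $i$ with $|h-h_i|<\rho$, \zcref{normconebound} and~\eqref{eq:phiepsconv} give
\[
d_\varepsilon\bigl((\Phi_\varepsilon(h),k)(x),(\Phi_\varepsilon(h_i),k)(x)\bigr)\le A\bigl(\|\varepsilon\Phi_\varepsilon(h)-\varepsilon\Phi_\varepsilon(h_i)\|+\varepsilon\bigr)\le A(\rho+2C_\varepsilon+\varepsilon)<\delta,
\]
and, both endpoints lying in $B_\varepsilon(x,\ovr)$, the modulus bounds $|f_\varepsilon((\Phi_\varepsilon(h),k)(x))-f_\varepsilon((\Phi_\varepsilon(h_i),k)(x))|<\epsilon/3$; a triangle inequality through $f_\varepsilon((\Phi_\varepsilon(h_i),k)(x))$ and $f(h_i)$ then yields $|f_\varepsilon((\Phi_\varepsilon(h),k)(x))-f(h)|<\epsilon$ uniformly in $h\in K$, so that $\sup_{h\in K}|f_\varepsilon((\Phi_\varepsilon(h),k)(x))-f(h)|\le\epsilon$ for all $\varepsilon<\varepsilon_0$; since $\epsilon$ is arbitrary, this is the assertion. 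I expect the only delicate point to be the uniform-in-$h$ comparison of $(\Phi_\varepsilon(h),k)(x)$ with $(\Phi_\varepsilon(h_i),k)(x)$ inside $M_\varepsilon$ — the one place where the structure of the asymptotic cone genuinely enters — but this is precisely the content of \zcref{normconebound}, itself resting on the bi-Lipschitz equivalence of an orbit metric with the Euclidean norm (\zcref{equivorbit}); granted that estimate, everything else is the standard net-plus-equicontinuity argument and involves no surprises.
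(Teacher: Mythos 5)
Your argument is correct: the confinement of all points $(\Phi_\varepsilon(h),k)(x)$, $h\in K$, in a single ball $B_\varepsilon(x,\ovr)$ via \zcref{normconebound} and \zcref{eq:phiepsconv}, the uniform-in-$\varepsilon$ modulus from $d_\varepsilon$-equicontinuity on that ball, the uniform continuity of $f$ on $K$ (via \zcref{equicontconvcont}), and pointwise convergence at the finitely many net points combine exactly as you claim to give the uniform estimate on $K$. The route differs from the paper's in its mechanism, though the ingredients coincide: the paper picks a sequence $(\varepsilon_n,h_n)$ realizing the $\limsup$ of the sup over $K$, extracts a convergent subsequence $h_n\to\overline h$, and then runs a three-term triangle inequality anchored at the single point $\overline h$, delegating the comparison between $(\Phi_{\varepsilon_n}(h_n),k)(x)$ and $(\Phi_{\varepsilon_n}(\overline h),k)(x)$ to \zcref{eqcont}; you instead anchor at a finite $\rho$-net and reprove the content of \zcref{eqcont} inline, quantitatively, through \zcref{normconebound}. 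The paper's sequential argument is shorter because \zcref{eqcont} is already available and no explicit moduli or nets need to be tracked; your covering argument is more self-contained and makes the quantitative dependence ($A\rho<\delta/2$, $A(2C_\varepsilon+\varepsilon)<\delta/2$) explicit, at the cost of repeating an estimate the paper has factored out. Both yield the statement for arbitrary fixed $k\in\Tcal$ and $x\in M$, as required by \zcref{coneconvdef}.
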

    \begin{proof}
        We fix $x\in M$, $k\in\Tcal$, a compact $K\subseteq\Rds^b$ and let $\{\varepsilon_n\}_{n\in\Nds}$ be an infinitesimal sequence and $\{h_n\}_{n\in\Nds}\subset K$ be such that
        \begin{equation}\label{eq:equicontconvlocunif1}
            \lim_{n\to\infty}|f_{\varepsilon_n}((\Phi_{\varepsilon_n}(h_n),k)(x))-f(h_n)|=\limsup_{\varepsilon\to0}\sup_{h\in K}|f_\varepsilon((\Phi_\varepsilon(h),k)(x))-f(h)|.
        \end{equation}
        Since $\{h_n\}$ is a sequence contained in a compact set we have that as $n$ goes to infinity $h_n$ tends, up to subsequences, to an $\overline h\in K$. Moreover, \zcref{eq:phiepsconv} yields
        \begin{equation}\label{eq:equicontconvlocunif2}
            \lim_{n\to\infty}\varepsilon_n\mleft\|\Phi_{\varepsilon_n}(h_n)-\Phi_{\varepsilon_n}\mleft(\overline h\mright)\mright\|\le\lim_{n\to\infty}\|\varepsilon_n\Phi_{\varepsilon_n}(h_n)-h_n\|+\mleft\|h_n-\overline h\mright\|+\mleft\|\overline h-\varepsilon_n\Phi_{\varepsilon_n}\mleft(\overline h\mright)\mright\|=0.
        \end{equation}
        Finally we observe that, according to \zcref[tlastsep={ and }]{eq:equicontconvlocunif2,eqcont,equicontconvcont},
        \begin{multline*}
            |f_{\varepsilon_n}((\Phi_{\varepsilon_n}(h_n),k)(x))-f(h_n)|\\
            \le\mleft|f_{\varepsilon_n}((\Phi_{\varepsilon_n}(h_n),k)(x))-f_{\varepsilon_n}\mleft(\mleft(\Phi_{\varepsilon_n}\mleft(\overline h\mright),k\mright)(x)\mright)\mright|+\mleft|f_{\varepsilon_n}\mleft(\mleft(\Phi_{\varepsilon_n}\mleft(\overline h\mright),k\mright)(x)\mright)-f\mleft(\overline h\mright)\mright|+\mleft|f\mleft(\overline h\mright)-f(h_n)\mright|
        \end{multline*}
        converges to $0$ as $n\to\infty$. Thanks to~\eqref{eq:equicontconvlocunif1} this proves the claim, since $x$ and $k$ have been arbitrarily chosen.
    \end{proof}

    These notions can be trivially extended to subsets or product spaces involving $M_\varepsilon$, {\it e.g.}, $f_\varepsilon:M_\varepsilon\times\Rds^+\to\Rds$ $\varepsilon\Phi_\varepsilon$-converges locally uniformly to $f:\Rds^b\times\Rds^+\to\Rds$ if, for each compact subset $K\subset\Rds^b\times\Rds^+$,
    \begin{equation*}
        \lim_{\varepsilon\to0^+}\sup_{(h,t)\in K}|f_\varepsilon((\Phi_\varepsilon(h),k)(x),t)-f(h,t)|=0,\qquad \text{for any $k\in\Tcal$ and $x\in M$}.
    \end{equation*}
    With straightforward modifications the previous results can be extended to these spaces.

    We further introduce the following \zcref[noref]{uniecontrf}, which will be useful later.

    \begin{defin}\label{uniecontrf}
        Given for all $\varepsilon>0$ the random function $f_\varepsilon:M_\varepsilon\times\Omega\to\Rds$, we will say that the $f_\varepsilon$ are \emph{uniformly $d_\varepsilon$-equicontinuous random functions} if for each $\omega\in\Omega$ and $\varepsilon>0$ the $f_\varepsilon(\omega)$ admit a common modulus of continuity.
    \end{defin}

    \section{The Effective Lagrangian}\label{effLsec}

  Our analysis relies on an asymptotic Lagrangian defined on the asymptotic cone $\Rds^b$. We define it proceeding by steps, using the minimal actions defined in \zcref{eq:minact,eq:minacteps}.

    \begin{theo}\label{effLex}
        There exists a map $\ovL:\Rds^b\to\Rds$ and a set of full measure $\ovOmega$ such that, for any $x\in M$, $k\in\Tcal$, $h\in\Rds^b$ and $\omega\in\ovOmega$,
        \begin{equation}\label{eq:effLex.1}
            \lim_{\varepsilon\to0^+}\phi_\varepsilon((0,k)(x),(\Phi_\varepsilon(h),k)(x),1,\omega)=\ovL(h).
        \end{equation}
    \end{theo}

    The map $\ovL$ is called the \emph{effective Lagrangian} and in the deterministic case corresponds to Mather's minimal action functional $\beta$, see for instance~\cite{Sorrentino15}. In the case of Riemannian metrics, it is also related to the so-called {\it stable norm}, introduced by Federer~\cite{Federer74} and popularized by Gromov~\cite{GromovLafontainePansu81,BuragoBuragoIvanov01}, see Appendix~\ref{sec:stablenorm}.

    \begin{proof}
        We start fixing $h\in\Zds^b$, $k\in\Tcal$ and $x\in M$. Then, for any $n,m\in\Nds$ with $n\ge m$ we define the random variables
        \begin{equation*}
            \phi_{m,n}(\omega)\coloneqq\phi((mh,k)(x),(nh,k)(x),n-m,\omega).
        \end{equation*}
        It is easy to see that
        \begin{myenum}
            \item $\phi_{m,n}(\omega)\le\phi_{m,l}(\omega)+\phi_{l,n}(\omega)$ for any $\omega\in\Omega$ and $m,l,n\in\Nds$ with $m\le l\le n$;
            \item by \zcref{phistat}\ref{en:phistat2} and the measure preserving property of the dynamical system induced by $\G$, the joint distributions $\{\phi_{m,n},0\le m\le n\}$ are the same as those of $\{\phi_{m+1,n+1},0\le m\le n\}$;
            \item for each $n\in\Nds$, $\Eds(|\phi_{0,n}|)<\infty$ and $\Eds(\phi_{0,n})>-nc$ for some constant $c$, as follows from \zcref{phibound}.
        \end{myenum}
        This is to say that the sequence $\{\phi_{m,n}\}$ satisfies the conditions of Kingman's subadditive ergodic Theorem (proof of a generalized version of it can be found in~\cite{Liggett85,Levental88}), therefore there is a random variable $\ovL(h,k,x)$ such that
        \begin{equation}\label{eq:effLex1}
            \lim_{n\to\infty}\frac{\phi((0,k)(x),(nh,k)(x),n,\omega)}n=\ovL(h,k,x,\omega),\qquad \text{a.s.}.
        \end{equation}
        Observe that, fixed $h\in\Rds^b$, by \zcref{normconebound,eq:phiepsconv} there is a constant $C_1>0$ and an infinitesimal sequence $\{C_\varepsilon\}_{\varepsilon>0}$ of positive numbers so that
        \begin{equation*}
            d((0,k)(x),(\Phi_\varepsilon(h),k)(x))\le\frac1\varepsilon(C_1\|h\|+C_\varepsilon),\qquad \text{for any $\varepsilon>0$},
        \end{equation*}
        {\it i.e.}, there is a constant $A_h$ so that
        \begin{equation*}
            \mleft((0,k)(x),(\Phi_\varepsilon(h),k)(x),\frac1\varepsilon\mright)\in\Ccal_{A_h},\qquad \text{for any $\varepsilon\in(0,1]$},
        \end{equation*}
        where
        \begin{equation*}
            \Ccal_{A_h}\coloneqq\mleft\{\mleft(x',x,t\mright)\in M^2\times\Rds^+:d\mleft(x',x\mright)<At\mright\}.
        \end{equation*}
        We further notice that for any $h\in\Zds^b$
        \begin{equation*}
            d((0,k)(x),(nh,k)(x))\le\sum_{i=1}^n d(((i-1)h,k)(x),(ih,k)(x))=nd((0,k)(x),(h,k)(x)),
        \end{equation*}
        therefore for each $h\in\Zds^b$, possibly taking a bigger $A_h$,
        \begin{equation*}
            \mleft((0,k)(x),(\Phi_\varepsilon(h),k)(x),\frac1\varepsilon\mright),((0,k)(x),(nh,k)(x),n)\in\Ccal_{A_h},\qquad \text{for any $\varepsilon\in(0,1]$}.
        \end{equation*}
        \zcref[S]{phiepslip} then yields the existence of a Lipschitz constant $\ell_{A_h}$ such that, if $\varepsilon\in(0,1]$,
        \begin{multline}\label{eq:effLex2}
            \mleft|\phi((0,k)(x),(nh,k)(x),n,\omega)-\phi\mleft((0,k)(x),(\Phi_\varepsilon(h),k)(x),\frac1\varepsilon,\omega\mright)\mright|\\
            \le\ell_{A_h}\mleft(d((\Phi_\varepsilon(h),k)(x),(nh,k)(x))+\mleft|n-\frac1\varepsilon\mright|\mright).
        \end{multline}
        Let $\{\varepsilon_m\}_{m\in\Nds}$ be a positive sequence converging to 0 and $\{n_m\}_{m\in\Nds}\subset\Nds$ be a such that $\dfrac1{n_m+1}<\varepsilon_m\le\dfrac1{n_m}$. Applying \zcref{normconebound} we get
        \begin{equation*}
            \varepsilon_m\mleft(d((\Phi_{\varepsilon_m}(h),k)(x),(n_m h,k)(x))+\mleft|n_m-\frac1{\varepsilon_m}\mright|\mright)\!\le\|\varepsilon_m\Phi_{\varepsilon_m}(h)-\varepsilon_m n_m h\|+|\varepsilon_m n_m-1|\xrightarrow{m\to\infty}0,
        \end{equation*}
        consequently~\zcref{eq:effLex1,eq:effLex2,eq:minactequiv} imply
        \begin{equation*}
            \lim_{\varepsilon\to0^+}\phi_\varepsilon((0,k)(x),(\Phi_\varepsilon(h),k)(x),1,\omega)=\ovL(h,k,x,\omega),\qquad \text{a.s.}.
        \end{equation*}
        Moreover, we have from \zcref{phiepslip} that the $\phi_\varepsilon(\omega)$ are locally $d_\varepsilon$-equiLipschitz continuous, thereby \zcref{eqcont} yields that $\ovL$ does not depend on $x$ and $k$. Another application of \zcref{eqcont} and \zcref{phistat}\ref{en:phistat2} further show that, for any $h\in\Zds^b$ and $g\in \G$,
        \begin{align*}
            \ovL(h,\tau_g\omega)&=\lim_{\varepsilon\to0^+}\phi_\varepsilon((0,k)(x),(\Phi_\varepsilon(h),k)(x),1,\tau_g\omega)\\
            &=\lim_{\varepsilon\to0^+}\phi_\varepsilon(((0,k)+g)(x),((\Phi_\varepsilon(h),k)+g)(x),1,\omega)=\ovL(h,\omega),
        \end{align*}
        hence the ergodic property of the dynamical system yields that $\ovL(h)$ is almost surely deterministic. This proves the claim when $h\in\Zds^b$.\\
        We now extend $\ovL$ to $\Qds^b$. More specifically, observe that if $h\in\Qds^b$ there is an $m\in\Nds$ so that $mh\in\Zds^b$ and that, reasoning as in the previous step, the limit
        \begin{equation*}
            \lim_{\varepsilon\to0^+}\frac1m\phi_{m\varepsilon}((0,k)(x),(\Phi_{m\varepsilon}(mh),k)(x),m,\omega)
        \end{equation*}
        exists, is almost surely deterministic and does not depend on the choice of $k$ and $x$. In addition, we get from \zcref{phiepsconvaux} that
        \begin{equation*}
            \ovL(h)\coloneqq\lim_{\varepsilon\to0^+}\phi_\varepsilon((0,k)(x),(\Phi_\varepsilon(h),k)(x),1,\omega)=\lim_{\varepsilon\to0^+}\frac1m\phi_{m\varepsilon}((0,k)(x),(\Phi_{m\varepsilon}(mh),k)(x),m,\omega)
        \end{equation*}
        is well-defined for any $h\in\Qds^b$.\\
        Finally, we notice that for each $h\in\Qds^b$ the set $\Omega_h$ made up of the $\omega\in\Omega$ such that~\eqref{eq:effLex.1} holds true is a set of full measure. Additionally,
        \begin{equation*}
            \ovOmega\coloneqq\bigcap_{h\in\Qds^b}\Omega_h
        \end{equation*}
        is a set of full measure too, thus the local $d_\varepsilon$-equiLipschitz continuity of $\phi_\varepsilon$ (see \zcref{phiepslip}), the density of $\Qds^b$ in $\Rds^b$ and~\eqref{eq:phiepsconv} show that the limit
        \begin{equation*}
            \ovL(h)\coloneqq\lim_{\varepsilon\to0^+}\phi_\varepsilon((0,k)(x),(\Phi_\varepsilon(h),k)(x),1,\omega)
        \end{equation*}
        exists for any $h\in\Rds^b$, $k\in\Tcal$, $x\in M$ and $\omega\in\ovOmega$, concluding the proof.
    \end{proof}

    \medskip

    The next result is a generalization of \zcref{effLex} which will play an important role later.

    \begin{cor}\label{phiunifconv}
        For any $\omega$ in a set of full measure, $h,h'\in\Rds^b$, $x,x'\in M$, $k,k'\in\Tcal$ and $t>0$
        \begin{equation*}
            \lim_{\varepsilon\to0^+}\phi_\varepsilon\mleft(\mleft(\Phi_\varepsilon\mleft(h'\mright),k'\mright)\mleft(x'\mright),(\Phi_\varepsilon(h),k)(x),t,\omega\mright)=t\ovL\mleft(\frac{h-h'}t\mright).
        \end{equation*}
    \end{cor}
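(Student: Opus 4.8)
The strategy is to reduce \zcref{phiunifconv} to \zcref{effLex} by a three-step procedure: (i) first handle the case $h' = 0$, $k' = k$, $x' = x$ and $t \in \Nds$ (or $t$ rational) via a subadditivity/rescaling argument; (ii) then remove the restrictions on the base points using stationarity and the local $d_\varepsilon$-equiLipschitz bounds; (iii) finally pass from rational to real $t$ by equicontinuity and density. Throughout, the key structural fact is the scaling identity \zcref{eq:minactequiv}, $\phi_\varepsilon(x',x,t,\omega) = \varepsilon\,\phi(x',x,t/\varepsilon,\omega)$, which converts a statement about $\phi_\varepsilon$ at fixed time $t$ into a statement about $\phi$ at large time $t/\varepsilon$, exactly the regime controlled by the subadditive ergodic theorem used to construct $\ovL$.

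First I would treat the diagonal case $\phi_\varepsilon((0,k)(x),(\Phi_\varepsilon(h),k)(x),t,\omega)$ for $t \in \Nds$. Using \zcref{eq:minactequiv} and then subadditivity of $\phi$ in the time variable (concatenating curves), one writes $\phi_\varepsilon(\cdot,\cdot,t,\omega) = \varepsilon\,\phi(\cdot,\cdot,t/\varepsilon,\omega)$ and compares $t/\varepsilon$ with an integer multiple of a suitable step; the point is that $\lim_{\varepsilon\to 0^+}\varepsilon\,\phi((0,k)(x),(\Phi_\varepsilon(h),k)(x),t/\varepsilon,\omega)$ should equal $t$ times $\lim_{\varepsilon\to 0^+}(\varepsilon/t)\,\phi((0,k)(x),(\Phi_{\varepsilon/t}(h/t\cdot t),k)(x),1/(\varepsilon/t),\omega)$ — i.e., one substitutes $\varepsilon \rightsquigarrow \varepsilon/t$ into \zcref{effLex}, applied at the point $th/t = h$, to get $t\,\ovL(h/t)$ after accounting for the fact that $\Phi_\varepsilon(h)$ and $\Phi_{\varepsilon/t}(h)$ differ by a vector whose $\varepsilon$-rescaled norm is infinitesimal (by \zcref{eq:phiepsconv} and \zcref{normconebound}). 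This is essentially the same $\Qds^b$-extension argument already used at the end of the proof of \zcref{effLex}, run now on the time variable instead of on the momentum. For rational $t = p/q$, one clears denominators analogously. The local $d_\varepsilon$-equiLipschitz continuity of $\phi_\varepsilon$ (\zcref{phiepslip}) guarantees that the errors from replacing $\Phi_\varepsilon(h)$ by any sequence $h_\varepsilon$ with $\varepsilon h_\varepsilon \to h$ vanish, via \zcref{eqcont}.

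Next, to move to general base points and a general shift $h'$, I use \zcref{phistat}\ref{en:phistat2}: $\phi_\varepsilon((\Phi_\varepsilon(h'),k')(x'),(\Phi_\varepsilon(h),k)(x),t,\omega)$. Translating by $g = (\Phi_\varepsilon(h'),k')^{-1}$ is not directly available since that element depends on $\varepsilon$, so instead I argue by the triangle-type inequality for $\phi_\varepsilon$ combined with \zcref{phiepslip} to reduce $(\Phi_\varepsilon(h'),k')(x')$ and $(\Phi_\varepsilon(h),k)(x)$ to a common base point, say $(\Phi_\varepsilon(h'),k)(x)$ and $(\Phi_\varepsilon(h),k)(x)$, paying a cost controlled by $d_\varepsilon$ between these points, which by \zcref{normconebound} and \zcref{eq:phiepsconv} stays bounded and whose contribution to the limit must be shown to vanish or to be absorbed. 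Then, writing $\Phi_\varepsilon(h) = \Phi_\varepsilon(h')\cdot(\Phi_\varepsilon(h)-\Phi_\varepsilon(h'))$ in $\Zds^b$ and using that $\varepsilon(\Phi_\varepsilon(h)-\Phi_\varepsilon(h')) \to h - h'$, I invoke the diagonal case from step one with $h$ replaced by $h - h'$ — the key being that the group action is by isometries so that $\phi_\varepsilon$ applied at $(\Phi_\varepsilon(h'),k)(x)$ and $(\Phi_\varepsilon(h),k)(x)$ has, after the deterministic (a.s.) identification of \zcref{liftisdet}-type stationarity, the same limit as the one based at $(0,k)(x)$ shifted by $h-h'$. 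Care is needed to intersect the relevant full-measure sets over a countable dense family of $(h,h')$ and all $k,k' \in \Tcal$ (a finite set), and then extend by continuity.

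The main obstacle I anticipate is step two: cleanly transferring the $\varepsilon$-dependent base point $(\Phi_\varepsilon(h'),k')(x')$ "to the origin" when the natural group element witnessing stationarity itself varies with $\varepsilon$. The resolution is to avoid translating the whole configuration at once and instead to (a) use stationarity of $\phi_\varepsilon$ under the $\varepsilon$-independent torsion part and lattice shifts to realign $k',x'$ with $k,x$, absorbing a bounded error via \zcref{phiepslip}, and then (b) exploit that $\phi_\varepsilon((\Phi_\varepsilon(h'),k)(x),(\Phi_\varepsilon(h),k)(x),t,\omega) = \phi_\varepsilon((0,k)(x),((\Phi_\varepsilon(h)-\Phi_\varepsilon(h'))\cdot\text{(torsion)},k)(x),t,\tau_{g_\varepsilon}\omega)$ for an appropriate $g_\varepsilon$, and since by \zcref{effLex} the limit is a.s.\ deterministic (independent of $\omega$ on the full-measure set $\ovOmega$), the $\tau_{g_\varepsilon}$-twist is harmless. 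With the diagonal limit $t\,\ovL((h-h')/t)$ in hand for rational $t$ and dense $h,h'$, the $d_\varepsilon$-equiLipschitz continuity of $\phi_\varepsilon$ together with continuity of $\ovL$ (which is convex, hence continuous on $\Rds^b$) upgrades this to all real $t > 0$ and all $h, h' \in \Rds^b$, on a single set of full measure, completing the proof.
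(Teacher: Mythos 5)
Your overall architecture (reduce to the diagonal case $h-h'$ at time $1$, rescale in $t$ as in \zcref{phiepsconvaux}, intersect full-measure sets over a countable dense family of $(h,h')$ and the finite $\Tcal$, then extend by \zcref{phiepslip} and the continuity of $\ovL$) matches the paper, but there is a genuine gap exactly at the point you flag as the main obstacle, and your proposed resolution does not close it. Stationarity rewrites $\phi_\varepsilon\mleft(\mleft(\Phi_\varepsilon\mleft(h'\mright),k'\mright)\mleft(x'\mright),(\Phi_\varepsilon(h),k)(x),t,\omega\mright)$ as the same quantity based at the origin but evaluated in the shifted environment $\tau_{g_\varepsilon}\omega$ with $g_\varepsilon=\Phi_\varepsilon\mleft(h'\mright)\to\infty$. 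Your step (b) claims this twist is ``harmless'' because the limit in \zcref{effLex} is a.s.\ deterministic. That inference is invalid: \zcref{effLex} gives pointwise-in-$\omega$ convergence on a full-measure set, with no uniformity whatsoever, so it says nothing about $\lim_{\varepsilon\to0^+}\phi_\varepsilon\mleft((0,k)(x),\mleft(\Phi_\varepsilon(h)-\Phi_\varepsilon\mleft(h'\mright),k\mright)(x),1,\tau_{g_\varepsilon}\omega\mright)$ along a sequence of environments that moves with $\varepsilon$; the ``how small must $\varepsilon$ be'' threshold depends on the environment, and here the environment changes at every $\varepsilon$. Controlling precisely this moving-environment limit is the whole content of the corollary, so the argument as written begs the question.

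The paper closes this gap with two ingredients you do not have: Egoroff's theorem, which upgrades the a.s.\ convergence of \zcref{effLex} to uniform convergence on sets $E_\delta\subseteq\Omega$ with $\Pds(\Omega\setminus E_\delta)<\delta$, and \zcref{lataux}, a Birkhoff-ergodic-theorem density lemma guaranteeing that, for a.e.\ $\omega$ and $\varepsilon$ small, one can find lattice points $h_\varepsilon\in\Zds^b$ with $\tau_{h_\varepsilon}\omega\in E_\delta$ and $\mleft\|h'-\varepsilon h_\varepsilon\mright\|\le\epsilon\mleft\|h'\mright\|$. One then applies stationarity with the shift $h_\varepsilon$ (whose twisted environment is known to lie in $E_\delta$, where convergence is uniform), and uses \zcref{phiepslip} together with \zcref{eqcont} to absorb the error from replacing $\Phi_\varepsilon\mleft(h'\mright)$, $k'$, $x'$ by $h_\varepsilon$, $k$, $x$, since $\varepsilon h_\varepsilon\to h'$. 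Without an argument of this type (uniform convergence on large sets plus an ergodic density statement ensuring those sets are hit by the relevant lattice shifts), your step two does not go through; the remaining steps of your plan are sound and essentially coincide with the paper's.
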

    \begin{proof}
        The case $b=0$ is trivial, hence we assume that $b\ge1$.\\
        By \zcref[tlastsep={ and }]{effLex,phiepslip,normconebound,eq:phiepsconv}, for any $x\in M$, $k\in\Tcal$ $h,h'\in\Rds$ and a.e.\ $\omega\in\Omega$,
        \begin{align*}
            \lim_{\varepsilon\to0^+}\phi_\varepsilon\mleft((0,k)(x),\mleft(\Phi_\varepsilon(h)-\Phi_\varepsilon\mleft(h'\mright),k\mright)(x),1,\omega\mright)&=\lim_{\varepsilon\to0^+}\phi_\varepsilon\mleft((0,k)(x),\mleft(\Phi_\varepsilon\mleft(h-h'\mright),k\mright)(x),1,\omega\mright)\\
            &=\ovL\mleft(h-h'\mright).
        \end{align*}
        We fix $x\in M$, $k\in\Tcal$, $h,h'\in\Rds^b$ and apply Egoroff's Theorem to the previous identity: for every $\delta>0$ there is set $E_\delta\subseteq\Omega$ with $\Pds(\Omega\setminus E_\delta)<\delta$ such that
        \begin{equation}\label{eq:phiunifconv1}
            \lim_{\varepsilon\to0^+}\sup_{\omega\in E_\delta}\mleft|\phi_\varepsilon\mleft((0,k)(x),\mleft(\Phi_\varepsilon(h)-\Phi_\varepsilon\mleft(h'\mright),k\mright)(x),1,\omega\mright)-\ovL\mleft(h-h'\mright)\mright|=0.
        \end{equation}
        It follows from \zcref{lataux} that, fixed an $\omega_0$ in a set of full measure $\Omega_{h,h'}$ and a constant $\epsilon>0$, there exist two positive constants $\delta_\epsilon$ and $R_\epsilon$ such that, for any $\ovh\in\Rds^b$ with $\epsilon\mleft\|\ovh\mright\|>R_\epsilon$,
        \begin{equation}\label{eq:phiunifconv2}
            \mleft\|\ovh-h_0\mright\|<\epsilon\mleft\|\ovh\mright\|,\qquad \text{for some $h_0$ with }\tau_{h_0}\omega_0\in E_{\delta_\epsilon}.
        \end{equation}
        There is then an $\varepsilon_\epsilon>0$ small enough such that, for any $\varepsilon\in(0,\varepsilon_\epsilon)$, $\epsilon\mleft\|\dfrac{h'}\varepsilon\mright\|>R_\epsilon$ and, by~\eqref{eq:phiunifconv1},
        \begin{equation*}
            \sup_{\omega\in E_{\delta_\epsilon}}\mleft|\phi_\varepsilon\mleft((0,k)(x),\mleft(\Phi_\varepsilon(h)-\Phi_\varepsilon\mleft(h'\mright),k\mright)(x),1,\omega\mright)-\ovL\mleft(h-h'\mright)\mright|<\epsilon,
        \end{equation*}
        thus exploiting~\eqref{eq:phiunifconv2} we set $h_\varepsilon\in\Zds^b$ so that, for any $\varepsilon\in(0,\varepsilon_\epsilon)$, $\tau_{h_\varepsilon}\omega_0\in E_{\delta_\epsilon}$ and
        \begin{equation}\label{eq:phiunifconv3}
            \mleft\|h'-\varepsilon h_\varepsilon\mright\|\le\epsilon\mleft\|h'\mright\|.
        \end{equation}
        This and \zcref{phistat}\ref{en:phistat2} yield that, for any $\varepsilon\in(0,\varepsilon_\epsilon)$,
        \begin{multline}\label{eq:phiunifconv4}
            \mleft|\phi_\varepsilon\mleft((h_\varepsilon,k)(x),\mleft(\Phi_\varepsilon(h)-\Phi_\varepsilon\mleft(h'\mright)+h_\varepsilon,k\mright)(x),1,\omega_0\mright)-\ovL\mleft(h-h'\mright)\mright|\\
            =\mleft|\phi_\varepsilon\mleft((0,k)(x),\mleft(\Phi_\varepsilon(h)-\Phi_\varepsilon\mleft(h'\mright),k\mright)(x),1,\tau_{h_\varepsilon}\omega_0\mright)-\ovL\mleft(h-h'\mright)\mright|<\epsilon.
        \end{multline}
        We have from \zcref{eq:phiunifconv3} that $\varepsilon h_\varepsilon\to h'$ as $\epsilon,\varepsilon\to0^+$, thereby \zcref[tlastsep={ and }]{phiepslip,eqcont,eq:phiunifconv4} show that, for any $k'\in\Tcal$ and $x'\in M$,
        \begin{multline}\label{eq:phiunifconv5}
            \lim_{\varepsilon\to0^+}\phi_\varepsilon\mleft(\mleft(\Phi_\varepsilon\mleft(h'\mright),k'\mright)\mleft(x'\mright),(\Phi_\varepsilon(h),k)(x),1,\omega_0\mright)\\
            =\lim_{\varepsilon\to0^+}\phi_\varepsilon\mleft((h_\varepsilon,k)(x),\mleft(\Phi_\varepsilon(h)-\Phi_\varepsilon\mleft(h'\mright)+h_\varepsilon,k\mright)(x),1,\omega_0\mright)=\ovL\mleft(h-h'\mright).
        \end{multline}
        We stress out that this identity holds true for every $\omega_0\in\Omega_{h,h'}$, which is a set of full measure depending on $h,h'\in\Rds^b$. Now define the set of full measure
        \begin{equation*}
            \ovOmega\coloneqq\bigcap_{h,h'\in\Qds^b}\Omega_{h,h'},
        \end{equation*}
        and observe that the density of $\Qds^b$ in $\Rds^b$, \zcref{phiepslip} and the continuity of $\ovL$ (follows from \zcref{equicontconvcont}) imply that~\eqref{eq:phiunifconv5} holds for any $k,k'\in\Tcal$, $x,x'\in M$, $h,h'\in\Rds^b$ and $\omega_0\in\ovOmega$. We finally apply \zcref{phiepsconvaux} to get that, for any $k,k'\in\Tcal$, $x,x'\in M$, $h,h'\in\Rds^b$, $t>0$ and $\omega\in\ovOmega$,
        \begin{multline*}
            \lim_{\varepsilon\to0^+}\phi_\varepsilon\mleft(\mleft(\Phi_\varepsilon\mleft(h'\mright),k'\mright)\mleft(x'\mright),(\Phi_\varepsilon(h),k)(x),t,\omega\mright)\\
            =t\lim_{\varepsilon\to0^+}\phi_{\frac\varepsilon t}\mleft(\mleft(\Phi_{\frac\varepsilon t}\mleft(\frac{h'}t\mright),k'\mright)\mleft(x'\mright),\mleft(\Phi_{\frac\varepsilon t}\mleft(\frac ht\mright),k\mright)(x),1,\omega\mright)=t\ovL\mleft(\frac{h-h'}t\mright),
        \end{multline*}
        which concludes the proof.
    \end{proof}

    \medskip

    Next we provide a key property of the effective Lagrangian.

    \begin{prop}\label{effLprop}
        The function $\ovL:\Rds^b\to\Rds$ is convex and there exist two convex nondecreasing superlinearly coercive functions $\vartheta_{\ovL},\Theta_{\ovL}:\Rds^+\to\Rds^+$ such that
        \begin{equation}\label{eq:effLprop.1}
            \vartheta_{\ovL}(\|h\|)\le\ovL(h)\le\Theta_{\ovL}(\|h\|),\qquad \text{for any }h\in\Rds^b.
        \end{equation}
    \end{prop}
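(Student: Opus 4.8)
The plan is to derive convexity of $\ovL$ from the subadditivity of the minimal actions $\phi_\varepsilon$ under concatenation of curves, passing to the limit by means of \zcref{phiunifconv}, and to obtain the bounds \eqref{eq:effLprop.1} by transferring the coercivity bounds \zcref{condcoerc} on $H$ to the Lagrangian $L$ via convex conjugacy and then letting $\varepsilon\to0^+$ as in \zcref{effLex}. Most of the analytic work has already been carried out in \zcref{effLex,phiunifconv} and in the asymptotic-cone discussion of Section~\ref{conesec}, so what remains is mostly bookkeeping.

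For convexity, I would fix $a,b\in\Rds^b$, $\lambda\in(0,1)$, set $c\coloneqq\lambda a+(1-\lambda)b$, and fix $x\in M$, $k\in\Tcal$. For every $\varepsilon>0$, concatenating an absolutely continuous curve joining $(0,k)(x)$ to $(\Phi_\varepsilon(\lambda a),k)(x)$ on $[0,\lambda]$ with one joining $(\Phi_\varepsilon(\lambda a),k)(x)$ to $(\Phi_\varepsilon(c),k)(x)$ on $[0,1-\lambda]$ gives the subadditivity inequality
\begin{multline*}
    \phi_\varepsilon\mleft((0,k)(x),(\Phi_\varepsilon(c),k)(x),1,\omega\mright)\le\phi_\varepsilon\mleft((0,k)(x),(\Phi_\varepsilon(\lambda a),k)(x),\lambda,\omega\mright)\\
    +\phi_\varepsilon\mleft((\Phi_\varepsilon(\lambda a),k)(x),(\Phi_\varepsilon(c),k)(x),1-\lambda,\omega\mright).
\end{multline*}
By \zcref{phiunifconv} (recall $\Phi_\varepsilon(0)=0$), for $\omega$ in a set of full measure the left-hand side converges, as $\varepsilon\to0^+$, to $\ovL(c)$, the first term on the right to $\lambda\ovL(a)$, and the second to $(1-\lambda)\ovL\mleft(\tfrac{(1-\lambda)b}{1-\lambda}\mright)=(1-\lambda)\ovL(b)$; hence $\ovL(c)\le\lambda\ovL(a)+(1-\lambda)\ovL(b)$. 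Since $\ovL$ is deterministic this inequality holds unconditionally, and as $\lambda\in\{0,1\}$ are trivial, $\ovL$ is convex.

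For the bounds, I would first record that conjugating the inequalities of \zcref{condcoerc} produces convex, nondecreasing, superlinearly coercive functions $\vartheta_L,\Theta_L$ (namely the convex conjugates $\vartheta_L\coloneqq\Theta^*$ and $\Theta_L\coloneqq\vartheta^*$ on $\Rds^+$; superlinearity of $\vartheta,\Theta$ makes these finite, finiteness of $\vartheta,\Theta$ makes them superlinear) such that $\vartheta_L(|q|_x)\le L(x,q,\omega)\le\Theta_L(|q|_x)$ for all $(x,q,\omega)\in TM\times\Omega$, which follows from $\sup_{|p|_x=\rho}\langle p,q\rangle=\rho|q|_x$ and \zcref{condcoerc}. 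Fixing $h\in\Rds^b$, $x\in M$, $k\in\Tcal$ and writing $x'\coloneqq(0,k)(x)$, $x_\varepsilon\coloneqq(\Phi_\varepsilon(h),k)(x)$, I would then use \eqref{eq:lagequiv} and test the infimum in \eqref{eq:minacteps} against a constant-speed minimizing $M$-geodesic from $x'$ to $x_\varepsilon$ reparametrized on $[0,1]$, obtaining $\phi_\varepsilon(x',x_\varepsilon,1,\omega)\le\Theta_L\mleft(d_\varepsilon(x',x_\varepsilon)\mright)$; conversely, for any competitor $\gamma$ in \eqref{eq:minacteps}, Jensen's inequality applied to the convex function $\vartheta_L$ on $[0,1]$, the monotonicity of $\vartheta_L$, and $\int_0^1|\dot\gamma|\,d\tau\ge d(x',x_\varepsilon)$ give $\phi_\varepsilon(x',x_\varepsilon,1,\omega)\ge\vartheta_L\mleft(d_\varepsilon(x',x_\varepsilon)\mright)$. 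By \zcref{normconebound} and \eqref{eq:phiepsconv} (equivalently, by the asymptotic-cone analysis leading to \zcref{conemac}) one has $d_\varepsilon(x',x_\varepsilon)\to\|h\|$ as $\varepsilon\to0^+$, so letting $\varepsilon\to0^+$ and invoking continuity of $\vartheta_L,\Theta_L$ together with \eqref{eq:effLex.1} yields $\vartheta_L(\|h\|)\le\ovL(h)\le\Theta_L(\|h\|)$, i.e.\ \eqref{eq:effLprop.1} with $\vartheta_{\ovL}=\vartheta_L$ and $\Theta_{\ovL}=\Theta_L$.

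I do not expect a genuine obstacle here: the two points requiring a little care are the passage from the coercivity bounds on $H$ to those on $L$ by convex duality — routine, but one must check that the conjugate functions stay nondecreasing and superlinear — and the geometric convergence $d_\varepsilon(x',x_\varepsilon)\to\|h\|$, which is exactly what the asymptotic-cone discussion of Section~\ref{conesec} is designed to supply.
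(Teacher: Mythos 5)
Your proof is correct and takes essentially the same route as the paper: convexity via a concatenation/subadditivity inequality for $\phi_\varepsilon$ passed to the limit with \zcref{phiunifconv}, and the two-sided bounds via the estimates of \zcref{phibound} (which you re-derive rather than cite) combined with \zcref{normconebound}, \eqref{eq:phiepsconv} and \zcref{effLex}. One minor caveat: since the paper takes $\|\cdot\|$ to be the Euclidean norm (\zcref{allnormequiv}), \zcref{normconebound} only gives $d_\varepsilon\mleft((0,k)(x),(\Phi_\varepsilon(h),k)(x)\mright)$ comparable to $\|h\|$ up to a constant $C_1$ (plus an $\varepsilon C_2$ error) rather than $d_\varepsilon\to\|h\|$, so in the limit one should set $\vartheta_{\ovL}(r)\coloneqq\vartheta_L\mleft(C_1^{-1}r\mright)$ and $\Theta_{\ovL}(r)\coloneqq\Theta_L(C_1 r)$ instead of $\vartheta_{\ovL}=\vartheta_L$, $\Theta_{\ovL}=\Theta_L$ --- harmless, since the statement allows arbitrary convex nondecreasing superlinear bounding functions.
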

    \begin{proof}
        By \zcref{phiunifconv} we have that, for any $x\in M$, $h,h'\in\Rds^b$, $\lambda\in[0,1]$ and a.e.\ $\omega\in\Omega$,
        \begin{align*}
            \ovL\mleft(\lambda h+(1-\lambda)h'\mright)&=\lim_{\varepsilon\to0^+}\phi_\varepsilon\mleft((\Phi_\varepsilon(-\lambda h),0)(x),\mleft(\Phi_\varepsilon\mleft((1-\lambda)h'\mright),0\mright)(x),1,\omega\mright)\\
            &\le\lim_{\varepsilon\to0^+}\phi_\varepsilon\mleft((\Phi_\varepsilon(-\lambda h),0)(x),x,\lambda,\omega\mright)+\phi_\varepsilon\mleft(x,\mleft(\Phi_\varepsilon\mleft((1-\lambda)h'\mright),0\mright)(x),1-\lambda,\omega\mright)\\
            &=\lambda\ovL(h)+(1-\lambda)\ovL\mleft(h'\mright),
        \end{align*}
        {\it i.e.}, $\ovL$ is convex. \zcref[S]{eq:effLprop.1} is a straightforward consequence of \zcref{eq:phiepsconv,effLex,phibound,normconebound}.
    \end{proof}

    \medskip

    We conclude this section with the following property of the effective Lagrangian, namely it inherits the homogeneity of the Lagrangian/Hamiltonian in the velocity/momentum variable. This --- in addition to its own interest --- will be used in \zcref{sec:stablenorm} for discussing a notion of stable norm for stationary ergodic families of metrics.

    \begin{prop}\label{homogeneityL}
        If the Lagrangian $L:TM\times\Omega\to\Rds$ defined in~\eqref{eq:lagequiv} satisfies
        \begin{equation*}
            L(x,\lambda q,\omega)=|\lambda|^\alpha L(x,q,\omega),\qquad \text{for any $\lambda\in\Rds$},
        \end{equation*}
        for some $\alpha>0$, then
        \begin{equation}\label{eq:homogeneityL.1}
            \ovL(\lambda h)=|\lambda|^\alpha \ovL(h),\qquad \text{for every $h\in\Rds^b$ and $\lambda\in\Rds$}.
        \end{equation}
    \end{prop}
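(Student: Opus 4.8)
The plan is to combine the variational characterization of $\ovL$ from \zcref{phiunifconv} with a time-rescaling identity for the minimal actions $\phi_\varepsilon$ forced by the assumed homogeneity of $L$. \textbf{Step 1.} First I would record that, by \eqref{eq:lagequiv}, the hypothesis passes to every $L_\varepsilon$, namely $L_\varepsilon(x,\lambda q,\omega)=|\lambda|^\alpha L_\varepsilon(x,q,\omega)$, and then deduce a scaling law for $\phi_\varepsilon$: given $\mu>0$ and an absolutely continuous $\gamma\colon[0,t]\to M_\varepsilon$, the time reparametrization $\tilde\gamma(s)\coloneqq\gamma(s/\mu)$, $s\in[0,\mu t]$, has the same endpoints and $\dot{\tilde\gamma}(s)=\dot\gamma(s/\mu)/\mu$, so a change of variables in \eqref{eq:minacteps} together with the degree-$\alpha$ homogeneity in the velocity gives $\int_0^{\mu t}L_\varepsilon(\tilde\gamma,\dot{\tilde\gamma},\omega)\,\drm s=\mu^{1-\alpha}\int_0^t L_\varepsilon(\gamma,\dot\gamma,\omega)\,\drm\tau$. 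As $\gamma\mapsto\tilde\gamma$ is a bijection between admissible curves, taking infima yields
\[
\phi_\varepsilon\mleft(y,y',\mu t,\omega\mright)=\mu^{1-\alpha}\,\phi_\varepsilon\mleft(y,y',t,\omega\mright),\qquad \text{for all }y,y'\in M_\varepsilon,\ t>0,\ \mu>0 .
\]

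\textbf{Step 2 (the case $\lambda>0$).} Next I would fix $x\in M$, $k\in\Tcal$, $h\in\Rds^b$ and $\omega$ in the full-measure set of \zcref{phiunifconv}; applied with $h'=0$, $x'=x$, $k'=k$, that corollary gives, for every $s>0$,
\[
\lim_{\varepsilon\to0^+}\phi_\varepsilon\mleft((\Phi_\varepsilon(0),k)(x),(\Phi_\varepsilon(h),k)(x),s,\omega\mright)=s\,\ovL(h/s).
\]
Using Step 1 with $\mu=1/\lambda$ on this curve data (and $t=1$), and letting $\varepsilon\to0^+$ through the previous display with $s=1/\lambda$ on the left and $s=1$ on the right, one obtains $\tfrac1\lambda\ovL(\lambda h)=(1/\lambda)^{1-\alpha}\ovL(h)$, i.e.\ $\ovL(\lambda h)=\lambda^\alpha\ovL(h)=|\lambda|^\alpha\ovL(h)$.

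\textbf{Step 3 (the cases $\lambda=0$ and $\lambda<0$).} For $\lambda=0$, since $\alpha>0$ the hypothesis forces $L(x,0,\omega)=0$, so the constant curve is admissible for $\phi_\varepsilon((\Phi_\varepsilon(0),k)(x),(\Phi_\varepsilon(0),k)(x),1,\omega)$ and gives $\ovL(0)\le0$, while \zcref{effLprop} gives $\ovL(0)\ge\vartheta_{\ovL}(0)\ge0$; hence $\ovL(0)=0=|0|^\alpha\ovL(h)$. For $\lambda<0$, taking $\lambda=-1$ in the hypothesis shows $L$ is even in the velocity, so reversing curves yields $\phi_\varepsilon(y,y',t,\omega)=\phi_\varepsilon(y',y,t,\omega)$ and, via \zcref{phiunifconv} once more, $\ovL(-h)=\ovL(h)$; therefore $\ovL(\lambda h)=\ovL(|\lambda|(-h))=|\lambda|^\alpha\ovL(-h)=|\lambda|^\alpha\ovL(h)$ by Step 2. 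This establishes \eqref{eq:homogeneityL.1}.

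\textbf{Main obstacle.} I expect Step 1 to be the only substantive point — namely, converting the pointwise homogeneity of $L$ into the scaling law for $\phi_\varepsilon$ and correctly tracking the exponent shift, since degree $\alpha$ in the velocity becomes degree $1-\alpha$ after integration over a rescaled time interval. Once this is in place the statement propagates to $\ovL$ merely by feeding $\phi_\varepsilon$ through \zcref{phiunifconv} with a suitable time parameter; the cases $\lambda\le0$ are routine, using only that $\alpha$-homogeneity with $\alpha>0$ makes $L$ vanish at zero velocity and even in the velocity.
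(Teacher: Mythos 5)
Your proposal is correct and follows essentially the same route as the paper: both arguments rest on the time-reparametrization of curves combined with the $\alpha$-homogeneity of $L$ in the velocity, followed by passing to the limit through \zcref{phiunifconv}. The only organizational difference is that you package the reparametrization as the exact scaling identity $\phi_\varepsilon\mleft(y,y',\mu t,\omega\mright)=\mu^{1-\alpha}\phi_\varepsilon\mleft(y,y',t,\omega\mright)$ and read off the conclusion by evaluating \zcref{phiunifconv} at the two times $t=1$ and $t=1/\lambda$ (handling $\lambda<0$ by evenness and $\lambda=0$ directly), whereas the paper derives a one-sided inequality along a minimizing curve and obtains the reverse inequality via the substitution $k=\lambda h$, $\mu=\lambda^{-1}$, treating $\lambda=0$ by continuity of $\ovL$.
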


    \medskip

    \begin{proof}
        First we fix $h\in\Rds^b$ and $\lambda\ne0$. Given a curve $\gamma$ so that
        \begin{equation*}
            \varepsilon\int_0^{\frac{1}{\varepsilon}}L(\gamma,\dot\gamma,\omega)\drm\tau=\varepsilon\phi\mleft((0,x),(\Phi_\varepsilon(\lambda h),x),\frac{1}{\varepsilon},\omega\mright)=\phi_\varepsilon((0,x),(\Phi_\varepsilon(\lambda h),x),1,\omega),
        \end{equation*}
        we have by \zcref{phiunifconv} that
        \begin{equation}\label{eq:homogeneityL1}
            \ovL(\lambda h)=\lim_{\varepsilon\to0^+}\phi_\varepsilon((0,x),(\Phi_\varepsilon(\lambda h),x),1,\omega)=\lim_{\varepsilon\to0^+}\varepsilon\int_0^{\frac{1}{\varepsilon}}L(\gamma,\dot\gamma,\omega)\drm\tau.
        \end{equation}
        We make the change of variable $r=|\lambda|\tau$ to get
        \begin{align*}
            \varepsilon\int_0^{\frac{1}{\varepsilon}}L(\gamma,\dot\gamma,\omega)\drm\tau&=\varepsilon\int_0^{\frac{|\lambda|}{\varepsilon}}L\mleft(\gamma\mleft(\frac{r}{|\lambda|}\mright),\dot\gamma\mleft(\frac{r}{|\lambda|}\mright),\omega\mright)\frac{\drm r}{|\lambda|}\\
            &=|\lambda|^\alpha \frac{1}{|\lambda|}\varepsilon\int_0^{\frac{|\lambda|}{\varepsilon}}L\mleft(\gamma\mleft(\frac{r}{|\lambda|}\mright),\dot\gamma\mleft(\frac{r}{|\lambda|}\mright)\frac{1}{|\lambda|},\omega\mright)\drm r\\
            &\ge|\lambda|^\alpha \frac{1}{|\lambda|}\phi_\varepsilon((0,x),(\Phi_\varepsilon(\lambda h),x),|\lambda|,\omega),
        \end{align*}
        which in turn shows, see \zcref{eq:homogeneityL1,phiunifconv}, that
        \begin{equation*}
            \ovL(\lambda h)\ge|\lambda|^\alpha\lim_{\varepsilon\to0^+}\frac{1}{|\lambda|}\phi_\varepsilon((0,x),(\Phi_\varepsilon(\lambda h),x),|\lambda|,\omega)=|\lambda|^\alpha \ovL(h).
        \end{equation*}
        Setting $k=\lambda h$ and $\mu=\lambda^{-1}$ we further obtain
        \begin{equation*}
            |\mu|^\alpha \ovL(k)\ge\ovL(\mu k).
        \end{equation*}
        Since $h$ and $\lambda$ are arbitrary, these inequalities prove~\zcref{eq:homogeneityL.1} when $\lambda\ne0$. The case $\lambda=0$ follows from the continuity of $\ovL$.
    \end{proof}

    \medskip

    \section{Proof of the Main Theorem}\label{sec:5}

    We can now define a Hamilton--Jacobi type problem whose solutions are limits of the solutions to~\eqref{eq:HJeps}.

    First we define the \emph{effective Hamiltonian} $\ovH$ as the convex conjugate of $\ovL$. It follows from \zcref{effLprop} and standard properties of the convex conjugates that also $\ovH$ is convex and superlinearly coercive. In the deterministic case $\ovH$ corresponds to Mather's $\alpha$ function (see, for instance, \cite{Sorrentino15}).

    We have the following PDE problem on the asymptotic cone $\Rds^b$: find a function $v:\Rds^b\times\Rds^+\to\Rds$ such that
    \begin{equation}\label{eq:HJhom}\tag{\=HJ}
        \mleft\{
        \begin{aligned}
            &\partial_t v(h,t)+\ovH(\partial_h v)=0,&& \text{in $\Rds^b\times(0,\infty)$},\\
            &v(h,0)=v_0(h),&& \text{in $\Rds^b$},
        \end{aligned}
        \mright.
    \end{equation}
    where $v_0$ is a uniformly continuous function.

    The next result is well known, see for instance~\cite{Lions82}.

    \begin{theo}
        Under our assumptions, \zcref{eq:HJhom} admits a unique uniformly continuous solution, which is given by the Lax--Oleinik type formula
        \begin{equation}\label{eq:vfhom}
            v(h,t)=\inf_{h'\in\Rds^b}\mleft\{v_0\mleft(h'\mright)+t\ovL\mleft(\frac{h-h'}t\mright)\mright\},\qquad \text{for $(h,t)\in\Rds^b\times\Rds^+$}.
        \end{equation}
    \end{theo}

    \bigskip

    For the reader's sake, we recall here the statement of the Main Theorem (in which we incorporate the notions of convergence and equicontinuity introduced in Section~\ref{sec:3.2}):

    \begin{main}
        \noindent Given a Hamiltonian $H:T^*M\times\Omega\to\Rds$ satisfying \zcref[comp]{condcoerc,condcont,condconv,condreg,condstat}, there exists a convex and superlinear effective Hamiltonian $\ovH:\Rds^b\to\Rds$ such that the following holds.

        \smallskip

        \noindent Let $\{u_0^\varepsilon\}_{\varepsilon>0}$ be a collection of uniformly $d_\varepsilon$-equicontinuous random functions from $M_\varepsilon$ into $\Rds$. If $u_0^\varepsilon(\omega)$ $\varepsilon\Phi_\varepsilon$-converges pointwise to $v_0$ a.s., then, for any $\omega$ in a set of full measure $\ovOmega\subseteq\Omega$, the solutions $u_\varepsilon(\omega)$ to~\eqref{eq:HJeps} locally uniformly $\varepsilon\Phi_\varepsilon$-converge a.s.\ to the only uniformly continuous solution $v$ to~\eqref{eq:HJhom}.
    \end{main}

    \medskip

    To prove this theorem we need some auxiliary results.

    \begin{lem}\label{minvfeps}
        Let $u_0^\varepsilon:M_\varepsilon\times\Omega\to\Rds$ be a collection of uniformly $d_\varepsilon$-equicontinuous random functions indexed by $\varepsilon>0$. For any $\varepsilon>0$, $\omega\in\Omega$ and $(x,t)\in M_\varepsilon\times\Rds^+$, the value function~\eqref{eq:vfeps} admits a minimizer $x_\varepsilon$ and there is a concave modulus of continuity $\rho$ so that
        \begin{equation*}
            d_\varepsilon(x_\varepsilon,x)\le\rho(t),
        \end{equation*}
        for every $\varepsilon>0$, $\omega\in\Omega$, $(x,t)\in M_\varepsilon\times\Rds^+$ and optimal $x_\varepsilon$ for the value function~\eqref{eq:vfeps} at $(x,t,\omega)$.
    \end{lem}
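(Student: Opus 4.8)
The plan is to establish the two claims --- existence of a minimizer and the uniform bound on its distance to $x$ --- by a fairly standard argument in weak KAM / optimal control theory, adapted to the rescaled spaces $M_\varepsilon$, exploiting the scaling relation \zcref{eq:minactequiv} so that everything can be read off from the unscaled minimal action $\phi$.

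\emph{Existence of a minimizer.} First I would fix $\varepsilon>0$, $\omega\in\Omega$ and $(x,t)\in M_\varepsilon\times\Rds^+$. Since the $u_0^\varepsilon$ are uniformly $d_\varepsilon$-equicontinuous random functions (\zcref{uniecontrf}), $x'\mapsto u_0^\varepsilon(x',\omega)$ admits a modulus of continuity that is independent of $\varepsilon$ and $\omega$; in particular it grows at most linearly in $d_\varepsilon(x',x_0)$ for any fixed base point. On the other hand, by the coercivity bound \zcref{condcoerc} transferred to the Lagrangian, $\phi_\varepsilon(x',x,t,\omega)$ is bounded below by a superlinear function of $d_\varepsilon(x',x)/t$ (this is the content of the auxiliary estimate \zcref{phibound}, rescaled via \zcref{eq:minactequiv}), so the map $x'\mapsto u_0^\varepsilon(x',\omega)+\phi_\varepsilon(x',x,t,\omega)$ tends to $+\infty$ as $d_\varepsilon(x',x)\to\infty$. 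Hence the infimum in \zcref{eq:vfeps} may be restricted to a closed $d_\varepsilon$-ball, which is compact since $M$ is complete and finite-dimensional (bounded closed subsets of a complete Riemannian manifold are compact by Hopf--Rinow). Combined with lower semicontinuity of $x'\mapsto\phi_\varepsilon(x',x,t,\omega)$ --- itself a consequence of Tonelli's theorem / the standard properties of the minimal action, valid by \zcref{condreg,condconv,condcoerc} --- and continuity of $u_0^\varepsilon(\cdot,\omega)$, the infimum is attained at some $x_\varepsilon$.

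\emph{The modulus bound.} For the quantitative estimate, I would compare the optimal value to the value obtained by the competitor $x'=x$: choosing the constant curve gives $\phi_\varepsilon(x,x,t,\omega)\le t\,L_\varepsilon(x,0,\omega)\le t\,\Theta(0)$ (here using that $L=H^*$ so $L(x,0,\omega)=-\min_p H(x,p,\omega)\le -\vartheta(0)$, i.e.\ $L(x,0,\omega)$ is bounded above by a constant $c_0$), so $u_\varepsilon(x,t,\omega)\le u_0^\varepsilon(x,\omega)+c_0 t$. On the other hand, for the optimal $x_\varepsilon$ we have $u_\varepsilon(x,t,\omega)=u_0^\varepsilon(x_\varepsilon,\omega)+\phi_\varepsilon(x_\varepsilon,x,t,\omega)$; writing $u_0^\varepsilon(x_\varepsilon,\omega)\ge u_0^\varepsilon(x,\omega)-\rho_0(d_\varepsilon(x_\varepsilon,x))$ for the common modulus $\rho_0$ of the initial data, and $\phi_\varepsilon(x_\varepsilon,x,t,\omega)\ge t\,\vartheta^*\!\bigl(d_\varepsilon(x_\varepsilon,x)/t\bigr)$-type superlinear lower bound from \zcref{phibound}, one gets an inequality of the form $t\,\psi\!\bigl(d_\varepsilon(x_\varepsilon,x)/t\bigr)\le \rho_0(d_\varepsilon(x_\varepsilon,x))+c_0 t$ with $\psi$ superlinear. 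Since $\rho_0$ is sublinear and $\psi$ is superlinear, this forces $d_\varepsilon(x_\varepsilon,x)$ to be bounded by a quantity depending only on $t$ (and on the fixed data $\rho_0,c_0,\vartheta,\Theta$), uniformly in $\varepsilon,\omega,x$; moreover the resulting bound is $o(t)$ for small $t$ and $O(t)$ for large $t$. Taking the concave envelope of this bound yields the desired concave modulus $\rho$.

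\emph{Main obstacle.} The only delicate point is bookkeeping: one must ensure all the constants entering $\rho$ genuinely depend only on $\vartheta,\Theta$ and the common modulus of the $u_0^\varepsilon$, and not on $\varepsilon$, $\omega$ or $x$. This is where the invariance built into the scaling relation \zcref{eq:minactequiv} and the $\varepsilon$-independence of the superlinear bounds $\vartheta,\Theta$ are essential, and it is the step that requires the most care; the compactness and lower-semicontinuity arguments are routine.
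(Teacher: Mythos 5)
Your argument is essentially the paper's: compare the optimal $x_\varepsilon$ with the competitor $x'=x$ (giving $\phi_\varepsilon(x,x,t,\omega)\le t\Theta_L(0)$ with $\Theta_L=\vartheta^*$), use the common modulus of the initial data against the superlinear lower bound of \zcref{phibound}, and restrict the infimum in~\eqref{eq:vfeps} to a compact sublevel set to get a minimizer. One quantitative slip, though: the bound you obtain is \emph{not} $o(t)$ for small $t$ (nor need it be); in general it vanishes much more slowly than linearly, since the initial data are only uniformly continuous. What actually has to be checked --- and is the only delicate point --- is that the bound tends to $0$ as $t\to0^+$, which does not follow merely from ``$\rho_0$ sublinear versus $\psi$ superlinear'': that comparison with a single affine majorant of $\rho_0$ only yields $d_\varepsilon(x_\varepsilon,x)\le\mathrm{const}+\Orm(t)$. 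The paper's fix is the $\delta$-splitting: for every $\delta>0$ write $\sigma(r)\le A_\delta r+\delta$ and choose $B_\delta$ with $\vartheta_L(r)\ge(A_\delta+\sqrt\delta)r-B_\delta$, which gives $d_\varepsilon(x_\varepsilon,x)\le\bigl((\Theta_L(0)+B_\delta)/\sqrt\delta\bigr)t+\sqrt\delta$ for each $\delta$; taking $\rho(t)\coloneqq\inf_{\delta>0}\bigl\{\bigl((\Theta_L(0)+B_\delta)/\sqrt\delta\bigr)t+\sqrt\delta\bigr\}$ produces a bound that is automatically concave (an infimum of affine functions, so no concave envelope is needed) and vanishes at $0^+$. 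With that adjustment your proof coincides with the one in the paper.
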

    \begin{proof}
        By definition each $u_0^\varepsilon$ admits a common concave modulus of continuity $\sigma$, hence for each $\delta>0$ there is an $A_\delta>0$ so that
        \begin{equation*}
            \sigma(r)\le A_\delta r+\delta,\qquad \text{for all $r\in\Rds^+$}.
        \end{equation*}
        Then, we let $\vartheta_L$ and $\Theta_L$ be as in \zcref{phibound}. Thanks to the coercivity of $\vartheta_L$ we also have a constant $B_\delta\ge0$ depending only on $\delta$ and $\vartheta_L$ satisfying
        \begin{equation*}
            \vartheta_L(r)\ge\mleft(A_\delta+\sqrt\delta\mright)r-B_\delta,\qquad \text{for any $\delta>0$ and $r\in\Rds^+$}.
        \end{equation*}
        Thus, for any $\varepsilon>0$, $\omega\in\Omega$ and $x,x'\in M_\varepsilon$,
        \begin{equation}\label{eq:minvfeps1}
            u^\varepsilon_0\mleft(x',\omega\mright)-u_0^\varepsilon(x,\omega)\ge-\sigma\mleft(d_\varepsilon\mleft(x,x'\mright)\mright)\ge-A_\delta d_\varepsilon\mleft(x,x'\mright)-\delta,
        \end{equation}
        while \zcref{phibound} yields, for any $\varepsilon>0$, $\omega\in\Omega$, $x,x'\in M_\varepsilon$ and $t>0$,
        \begin{equation}\label{eq:minvfeps2}
            \mleft(A_\delta+\sqrt\delta\mright)d_\varepsilon\mleft(x',x\mright)-tB_\delta\le t\vartheta_L\mleft(\frac{d_\varepsilon(x',x)}t\mright)\le\phi_\varepsilon\mleft(x',x,t,\omega\mright).
        \end{equation}
        Next we fix $\varepsilon>0$, $x\in M_\varepsilon$, $t\in\Rds^+$, $\omega\in\Omega$ and let $x'\in M_\varepsilon$ be such that
        \begin{equation}\label{eq:minvfeps3}
            u_0^\varepsilon\mleft(x',\omega\mright)+\phi_\varepsilon\mleft(x',x,t,\omega\mright)\le u_0^\varepsilon(x,\omega)+\phi_\varepsilon(x,x,t,\omega)\le u_0^\varepsilon(x,\omega)+t\Theta_L(0).
        \end{equation}
        Finally~\zcref{eq:minvfeps1,eq:minvfeps2,eq:minvfeps3} show that
        \begin{equation*}
            \sqrt\delta d_\varepsilon\mleft(x',x\mright)\le u_0^\varepsilon\mleft(x',\omega\mright)-u_0^\varepsilon(x,\omega)+\delta+\phi_\varepsilon\mleft(x',x,t,\omega\mright)+tB_\delta\le t\Theta_L(0)+tB_\delta+\delta,
        \end{equation*}
        which implies that there is a concave modulus of continuity $\rho$ such that
        \begin{equation*}
            d_\varepsilon\mleft(x',x\mright)\le\rho(t)\coloneqq\inf_{\delta>0}\mleft\{\mleft(\frac{\Theta_L(0)+B_\delta}{\sqrt\delta}\mright)t+\sqrt\delta\mright\},
        \end{equation*}
        for every $\varepsilon>0$, $\omega\in\Omega$, $x\in M_\varepsilon$ and $t\in\Rds^+$ whenever $x'\in M_\varepsilon$ satisfies~\eqref{eq:minvfeps3}. This proves the claim since the infimum in~\eqref{eq:vfeps} can be restricted to the compact subset of $M_\varepsilon$ made up by the $x'\in M_\varepsilon$ satisfying~\eqref{eq:minvfeps3}.
    \end{proof}

    \begin{rem}\label{minvfhom}
        Arguing as in the proof of \zcref{minvfeps}, with \zcref{effLprop} in place of \zcref{phibound}, we can prove that the infimum in~\eqref{eq:vfhom} is actually a minimum.
    \end{rem}

    \begin{prop}\label{vfloclip}
        Given a collection $u_0^\varepsilon:M_\varepsilon\times\Omega\to\Rds$ of uniformly $d_\varepsilon$-equicontinuous random functions indexed by $\varepsilon>0$, for each $t_0>0$ the value functions $u_\varepsilon(\omega)$ defined in~\eqref{eq:vfeps} and restricted to $M_\varepsilon\times[t_0,\infty)$ are $d_\varepsilon$-equi--Lipschitz continuous, uniformly with respect to $\omega$.
    \end{prop}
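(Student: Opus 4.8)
The plan is to establish the Lipschitz bound separately in the space and in the time variables, keeping every constant dependent only on $t_0$ and on the structural bounds $\vartheta_L,\Theta_L$ of the Lagrangian furnished by \zcref{phibound}; uniformity in $\varepsilon$ is then automatic, because by the rescaling identity~\eqref{eq:minactequiv} the $d_\varepsilon$-cones $\{(y,x,s):d_\varepsilon(y,x)<As\}$ correspond to the fixed cones $\Ccal_A$, on which \zcref{phiepslip} provides $d_\varepsilon$-equi-Lipschitz estimates for the $\phi_\varepsilon(\omega)$ with constants independent of $\varepsilon$ and $\omega$. Throughout I use that $\{u_0^\varepsilon\}$ is uniformly $d_\varepsilon$-equicontinuous, so that \zcref{minvfeps} applies.

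\emph{Space variable.} Fix $t\ge t_0$, $\omega$, $\varepsilon$ and first take $x,x'\in M_\varepsilon$ with $d_\varepsilon(x,x')\le t_0$. By \zcref{minvfeps} the infimum in~\eqref{eq:vfeps} defining $u_\varepsilon(x',t,\omega)$ is attained at some $y'$ with $d_\varepsilon(y',x')\le\rho(t)$, and inspecting the explicit form of $\rho$ one checks that $\rho(t)/t\le A_0$ on $[t_0,\infty)$ for a constant $A_0=A_0(t_0,\vartheta_L,\Theta_L)$. Testing the infimum defining $u_\varepsilon(x,t,\omega)$ with the same $y'$ and subtracting the identity $u_\varepsilon(x',t,\omega)=u_0^\varepsilon(y',\omega)+\phi_\varepsilon(y',x',t,\omega)$ gives
\[
u_\varepsilon(x,t,\omega)-u_\varepsilon(x',t,\omega)\le\phi_\varepsilon(y',x,t,\omega)-\phi_\varepsilon(y',x',t,\omega).
\]
Since $d_\varepsilon(y',x')\le A_0t$ and $d_\varepsilon(y',x)\le A_0t+d_\varepsilon(x,x')\le(A_0+1)t$, both triples lie in the $d_\varepsilon$-cone of parameter $A_1\coloneqq A_0+2$, where \zcref{phiepslip} gives a Lipschitz constant $\ell_{A_1}$ independent of $\varepsilon$ and $\omega$; hence $|u_\varepsilon(x,t,\omega)-u_\varepsilon(x',t,\omega)|\le\ell_{A_1}d_\varepsilon(x,x')$ whenever $d_\varepsilon(x,x')\le t_0$. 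For arbitrary $x,x'$, since $M_\varepsilon$ is a complete Riemannian (hence geodesic) space, I subdivide a minimizing $d_\varepsilon$-geodesic from $x$ to $x'$ into finitely many arcs of length at most $t_0$ and sum the increments, which preserves the constant $\ell_{A_1}$.

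\emph{Time variable.} Fix $\omega,\varepsilon,x$ and $t_0\le t<t'$. For the bound from above, let $y_t$ be a minimizer for $u_\varepsilon(x,t,\omega)$; concatenation of curves together with \zcref{phibound} gives $\phi_\varepsilon(y_t,x,t',\omega)\le\phi_\varepsilon(y_t,x,t,\omega)+(t'-t)\Theta_L(0)$, whence $u_\varepsilon(x,t',\omega)\le u_\varepsilon(x,t,\omega)+(t'-t)\Theta_L(0)$. For the bound from below, write $u_\varepsilon(x,t',\omega)=u_0^\varepsilon(y',\omega)+\phi_\varepsilon(y',x,t',\omega)$ with $y'$ the minimizer from \zcref{minvfeps}, pick a curve from $y'$ to $x$ in time $t'$ with action within $\eta$ of the infimum, let $z$ be its position at time $t$, and split its action at $t$; using the definition of $u_\varepsilon$ as an infimum, then the space bound just proved at the level $t\ge t_0$, and finally \zcref{phibound},
\[
u_\varepsilon(x,t',\omega)+\eta\ge u_\varepsilon(z,t,\omega)+\phi_\varepsilon(z,x,t'-t,\omega)\ge u_\varepsilon(x,t,\omega)-\ell_{A_1}d_\varepsilon(z,x)+(t'-t)\,\vartheta_L\mleft(\frac{d_\varepsilon(z,x)}{t'-t}\mright).
\]
Letting $\eta\to0^+$, minimizing the last two terms over $d_\varepsilon(z,x)\ge0$, and using the superlinearity of $\vartheta_L$ yields $u_\varepsilon(x,t',\omega)\ge u_\varepsilon(x,t,\omega)-C(t'-t)$ with $C\coloneqq\sup_{r\ge0}\{\ell_{A_1}r-\vartheta_L(r)\}<\infty$. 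Combining, $u_\varepsilon(\omega)$ restricted to $M_\varepsilon\times[t_0,\infty)$ is $d_\varepsilon$-equi-Lipschitz with constant $\max\{\ell_{A_1},\Theta_L(0),C\}$, independently of $\omega$ (and of $\varepsilon$).

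The point where care is needed is exactly the uniformity of these constants in $\varepsilon$, which is precisely what the $d_\varepsilon$-formulations of \zcref{minvfeps,phiepslip,phibound} deliver through~\eqref{eq:minactequiv}. The hypothesis $t\ge t_0$ is essential rather than technical: $\rho(t)/t\to\infty$ as $t\to0^+$, so without a positive lower bound on $t$ the relevant starting points $y'$ cannot be confined to a fixed finite-propagation-speed cone, and a Lipschitz (as opposed to merely Hölder) estimate on $\phi_\varepsilon$ in its endpoints is available only on such cones.
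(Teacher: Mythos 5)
Your proof is correct, and its core mechanism is the same as the paper's: use \zcref{minvfeps} together with the linear bound $\rho(t)\le At+1\le(A+1/t_0)t$ on $[t_0,\infty)$ to confine the relevant initial points to a cone of fixed slope, and then invoke the $\varepsilon$- and $\omega$-uniform Lipschitz estimate for $\phi_\varepsilon$ on such cones from \zcref{phiepslip}. Where you diverge is in the time variable: the paper's proof needs no separate time argument, because \zcref{phiepslip} gives Lipschitz continuity of $\phi_\varepsilon(\omega)$ \emph{jointly} in $(x',x,t)$ on $\overline{\Ccal_{\varepsilon,A}}$, so once the minimizers are confined, $u_\varepsilon(\cdot,\cdot,\omega)$ is locally an infimum of functions that are $d_\varepsilon$-equi-Lipschitz in both variables with one constant, and the conclusion follows in a single stroke. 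You instead re-derive the time regularity by hand, via concatenation and $\phi_\varepsilon(x,x,s,\omega)\le s\,\Theta_L(0)$ for the upper bound, and dynamic programming plus the sup-convolution estimate $\sup_{r\ge0}\{\ell_{A_1}r-\vartheta_L(r)\}<\infty$ (superlinearity of $\vartheta_L$) for the lower bound, using only the space-Lipschitz estimate on $\phi_\varepsilon$. This costs more length but buys explicit Lipschitz constants in $t$ and spells out two points the paper leaves implicit: the chaining along $d_\varepsilon$-geodesics needed to pass from nearby to arbitrary pairs of points, and the fact that the admissible index set in the infimum representation varies with the base point. Both arguments rest on the same two ingredients, the confinement of minimizers and the cone estimates of \zcref{phibound,phiepslip}, with uniformity in $\varepsilon$ coming from the rescaling identity~\eqref{eq:minactequiv}.
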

    \begin{proof}
        Fixed a $t_0>0$, \zcref{minvfeps} and the concavity of $\rho$ yield that, for any $\varepsilon>0$, $\omega\in\Omega$, $(x,t)\in M_\varepsilon\times[t_0,\infty)$ and $x_\varepsilon$ optimal for $u_\varepsilon(x,t,\omega)$, there is a positive constant $A$ so that
        \begin{equation*}
            d_\varepsilon\mleft(x_\varepsilon,x\mright)\le\rho(t)\le At+1\le\mleft(A+\frac1{t_0}\mright)t.
        \end{equation*}
        It follows that we can select a $C>A$ such that, for any $\varepsilon>0$, $\omega\in\Omega$ and $(x,t)\in M_\varepsilon\times[t_0,\infty)$,
        \begin{equation}\label{eq:vfloclip1}
            u_\varepsilon(x,t,\omega)=\inf\mleft\{u_0^\varepsilon\mleft(x',\omega\mright)+\phi_\varepsilon\mleft(x',x,t,\omega\mright):x'\in X,\,d_\varepsilon\mleft(x',x\mright)<Ct\mright\}.
        \end{equation}
        Taking into account \zcref{phiepslip}, \zcref{eq:vfloclip1} shows that $u_\varepsilon(x,t,\omega)$ is the infimum of $d_\varepsilon$-equiLipschitz continuous functions in a neighborhood of $(x,t)$ uniformly with respect to $\omega\in\Omega$. Moreover, the Lipschitz constant is independent of $(x,t)$. This concludes the proof.
    \end{proof}

    \begin{prop}\label{vfepsequicont}
        If the initial data $u_0^\varepsilon:M_\varepsilon\times\Omega\to\Rds$ are uniformly $d_\varepsilon$-equicontinuous random functions, then the corresponding value functions $u_\varepsilon$, defined in~\eqref{eq:vfeps}, are uniformly $d_\varepsilon$-equicontinuous random functions.
    \end{prop}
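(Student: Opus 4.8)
The plan is to establish equicontinuity in the time variable and in the space variable separately, then combine. Since $u_0^\varepsilon(\omega)$ admits a common modulus of continuity $\sigma$ (independent of $\varepsilon$ and $\omega$) and $u_\varepsilon(\cdot,\cdot,\omega)$ is given by the Lax–Oleinik formula \eqref{eq:vfeps}, the joint measurability in $\omega$ is inherited from that of $u_0^\varepsilon$ and $\phi_\varepsilon$ (the latter being a realization of a stationary action, hence jointly measurable by the arguments around \zcref{phistat}); so the only real content is the existence of a common modulus of continuity in $(x,t)$.

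First I would treat the spatial regularity. Fix $\varepsilon>0$, $\omega\in\Omega$, $t>0$ and $x,x'\in M_\varepsilon$. Using the dynamic programming structure of \eqref{eq:vfeps}, for any $t_1\in(0,t)$ one has the semigroup identity $u_\varepsilon(x,t,\omega)=\inf_{y\in M_\varepsilon}\{u_\varepsilon(y,t-t_1,\omega)+\phi_\varepsilon(y,x,t_1,\omega)\}$, and comparing the values at $x$ and $x'$ by using a near-minimizer for one and transplanting it along a short geodesic to the other gives a bound of the form $|u_\varepsilon(x,t,\omega)-u_\varepsilon(x',t,\omega)|\le t_1\,\Theta_L(d_\varepsilon(x,x')/t_1)$ after optimizing over $t_1\le t$; alternatively, for $t$ bounded below this is exactly \zcref{vfloclip}, which already gives a $d_\varepsilon$-equiLipschitz bound. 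For small $t$ I would instead argue directly: a competitor curve that is constant equal to $x$ on $[0,t]$ shows $u_\varepsilon(x,t,\omega)\le u_0^\varepsilon(x,\omega)+t\,\Theta_L(0)$, while \zcref{minvfeps} confines the optimal initial point $x_\varepsilon$ to $d_\varepsilon(x_\varepsilon,x)\le\rho(t)$ with $\rho$ a concave modulus, and then $|u_\varepsilon(x,t,\omega)-u_0^\varepsilon(x,\omega)|\le\sigma(\rho(t))+t\,\Theta_L(0)=:\omega_1(t)$, a modulus of continuity in $t$ near $0$ that is uniform in $\varepsilon,\omega,x$. Combining $|u_\varepsilon(x,t,\omega)-u_\varepsilon(x',t,\omega)|\le|u_\varepsilon(x,t,\omega)-u_0^\varepsilon(x,\omega)|+\sigma(d_\varepsilon(x,x'))+|u_0^\varepsilon(x',\omega)-u_\varepsilon(x',t,\omega)|$ handles small $t$, and \zcref{vfloclip} handles $t$ bounded below; taking the larger of the two bounds yields a modulus of continuity in $x$ uniform in $(\varepsilon,\omega,t)$.

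Next, for temporal regularity I would fix $x\in M_\varepsilon$ and $0<s<t$ and again use the semigroup property: writing $u_\varepsilon(x,t,\omega)=\inf_{y}\{u_\varepsilon(y,s,\omega)+\phi_\varepsilon(y,s,t-s,\cdot)\}$ — more precisely $\inf_y\{u_\varepsilon(y,s,\omega)+\phi_\varepsilon(y,x,t-s,\omega)\}$ — one gets $u_\varepsilon(x,t,\omega)\le u_\varepsilon(x,s,\omega)+(t-s)\Theta_L(0)$ by taking $y=x$, and the reverse inequality $u_\varepsilon(x,t,\omega)\ge u_\varepsilon(x,s,\omega)-\mathrm{(error)}$ follows because an optimal $y$ for time $t$ satisfies $d_\varepsilon(y,x)\le\rho(t-s)$ (again \zcref{minvfeps} applied to the flow from time $s$ to time $t$), so $u_\varepsilon(y,s,\omega)\ge u_\varepsilon(x,s,\omega)-\omega_1(\rho(t-s))$ using the spatial modulus already obtained, and $\phi_\varepsilon(y,x,t-s,\omega)\ge (t-s)\vartheta_L(0)\ge -C(t-s)$ by \zcref{phibound}. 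Altogether $|u_\varepsilon(x,t,\omega)-u_\varepsilon(x,s,\omega)|\le\omega_2(t-s)$ for a modulus $\omega_2$ uniform in $\varepsilon,\omega,x$. The joint modulus is then the sum of the spatial and temporal ones.

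The main obstacle I anticipate is the book-keeping for small times: \zcref{vfloclip} only provides equi-Lipschitz bounds on $M_\varepsilon\times[t_0,\infty)$, with Lipschitz constant blowing up like $1/t_0$, so one genuinely needs the separate small-time estimate via \zcref{minvfeps} to patch across $t=0$, and one must be careful that the two estimates fit together into a single modulus of continuity valid on all of $M_\varepsilon\times\Rds^+$ (not just away from $t=0$), uniformly in $\varepsilon$ and $\omega$. The uniformity in $\omega$ is automatic here because every constant that enters — $\sigma$, $\Theta_L$, $\vartheta_L$, $\rho$ — is independent of $\omega$ by \zcref{minvfeps} and \zcref{phibound}; the only place randomness could intrude is measurability, which, as noted, is inherited from the jointly measurable ingredients in \eqref{eq:vfeps}.
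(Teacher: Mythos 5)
Your proposal is correct and rests on the same two pillars as the paper's proof, namely the equi-Lipschitz bound of \zcref{vfloclip} away from $t=0$ and the small-time comparison of $u_\varepsilon$ with its initial datum obtained from \zcref{minvfeps}, \zcref{phibound} and the common modulus $\sigma$ of the $u_0^\varepsilon$ (your $\omega_1(t)=\sigma(\rho(t))+t\,\Theta_L(0)$ is, up to the constant in front of $t$, exactly the paper's estimates \eqref{eq:vfepsequicont3}--\eqref{eq:vfepsequicont4}) --- but it is organized differently. The paper argues by contradiction: a putative bad sequence of pairs must, by \zcref{vfloclip}, have both times infinitesimal (note that \zcref{vfloclip} gives Lipschitz control jointly in $(x,t)$ on $M_\varepsilon\times[t_0,\infty)$, not only in $x$), and then a single triangle inequality through $u_0^\varepsilon$ at the two points controls the spatial and temporal increments simultaneously, yielding the contradiction. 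You instead build an explicit modulus, treating space and time separately; for the temporal part you invoke the dynamic-programming identity and re-run the argument of \zcref{minvfeps} with $u_\varepsilon(\cdot,s,\omega)$ as data. This is legitimate --- the spatial modulus you construct first is uniform in $(\varepsilon,\omega,s)$, and on a geodesic space it can be upgraded to a concave one, which is the same implicit upgrade the paper uses when quoting a concave $\sigma$ --- but it is extra machinery the paper does not need: once \zcref{vfloclip} controls the joint $(x,t)$-increments for $t\ge t_0$ and the comparison with $u_0^\varepsilon$ controls everything for $t\le t_0$, the semigroup-based temporal estimate is redundant (and your side remark that transplanting a near-minimizer gives $t_1\Theta_L(d_\varepsilon(x,x')/t_1)$ actually compares values at different times, so it would also need the temporal estimate; fortunately you do not rely on it). The trade-off is that your direct construction produces an explicit, quantitative modulus at the cost of the semigroup property and the small-time/large-time patching you rightly flag, whereas the paper's compactness argument is shorter but non-quantitative.
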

    \begin{proof}
        We assume by contradiction that fixed $\delta>0$ there exist the subsequences $\{\varepsilon_n\}_{n\in\Nds}\subset(0,\infty)$, $\{(x_n,t_n)\}_{n\in\Nds},\{(x_n',t_n')\}_{n\in\Nds}\subset M\times\Rds^+$ and $\{\omega_n\}_{n\in\Nds}\subset\Omega$ so that
        \begin{equation}\label{eq:vfepsequicont1}
            \lim_{n\to\infty}d_{\varepsilon_n}\mleft(x_n,x_n'\mright)+\mleft|t_n-t_n'\mright|=0
        \end{equation}
        and
        \begin{equation}\label{eq:vfepsequicont2}
            \mleft|u_{\varepsilon_n}(x_n,t_n,\omega_n)-u_{\varepsilon_n}\mleft(x_n',t_n',\omega_n\mright)\mright|>\delta,\qquad \text{for any $n\in\Nds$}.
        \end{equation}
        We infer from \zcref{vfloclip} that, up to subsequences, both $t_n$ and $t_n'$ are infinitesimal; we thus proceed estimating
        \begin{equation*}
            |u_\varepsilon(x,t,\omega)-u_0^\varepsilon(x,\omega)|,\qquad \text{for $\varepsilon>0$, $x\in M_\varepsilon$, $t\in\Rds^+$ and $\omega\in\Omega$}.
        \end{equation*}
        Let $\sigma$ be the common modulus of continuity of the $u_0^\varepsilon(\omega)$ and $x'$ be an optimal point for $\varepsilon$, $(x,t)$ and $\omega$ in~\eqref{eq:vfeps}, then \zcref{phibound,minvfeps} yield
        \begin{equation}\label{eq:vfepsequicont3}
            \begin{aligned}
                u_0^\varepsilon(x,\omega)-u_\varepsilon(x,t,\omega)&\le u_0^\varepsilon(x,\omega)-u_0^\varepsilon\mleft(x',\omega\mright)-\phi_\varepsilon\mleft(x',x,t,\omega\mright)\le\sigma\mleft(d_\varepsilon\mleft(x',x\mright)\mright)-t\min_{r\in\Rds^+}\vartheta_L(r)\\
                &\le\sigma(\rho(t))+t\mleft|\min_{r\in\Rds^+}\vartheta_L(r)\mright|,
            \end{aligned}
        \end{equation}
        where $\rho$ is a modulus of continuity and $\vartheta_L$ is a convex and coercive function. We further get
        \begin{equation}\label{eq:vfepsequicont4}
            u_\varepsilon(x,t,\omega)-u_0^\varepsilon(x,\omega)\le u_0^\varepsilon(x,\omega)+\phi_\varepsilon(x,x,t,\omega)-u_0^\varepsilon(x,\omega)\le t\Theta_L(0).
        \end{equation}
        Finally, we apply \zcref{eq:vfepsequicont3,eq:vfepsequicont4} to~\eqref{eq:vfepsequicont2} and obtain
        \begin{multline*}
            \mleft|u_{\varepsilon_n}(x_n,t_n,\omega_n)-u_{\varepsilon_n}\mleft(x_n',t_n',\omega_n\mright)\mright|\\
            \begin{aligned}
                \le\,&|u_{\varepsilon_n}(x_n,t_n,\omega_n)-u_0^{\varepsilon_n}(x_n,\omega_n)|+\mleft|u_0^{\varepsilon_n}(x_n,\omega_n)-u_0^{\varepsilon_n}\mleft(x_n',\omega_n\mright)\mright|+\mleft|u_0^{\varepsilon_n}\mleft(x_n',\omega_n\mright)-u_{\varepsilon_n}\mleft(x_n',t_n',\omega_n\mright)\mright|\\
                \le\,&\sigma(\rho(t_n))+t_n\mleft|\min_{r\in\Rds^+}\vartheta_L(r)\mright|+t_n\Theta_L(0)+\sigma\mleft(d_\varepsilon\mleft(x_n',x_n\mright)\mright)+\sigma\mleft(\rho\mleft(t'_n\mright)\mright)+t_n'\mleft|\min_{r\in\Rds^+}\vartheta_L(r)\mright|+t_n'\Theta_L(0).
            \end{aligned}
        \end{multline*}
        Since both $t_n$ and $t_n'$ are infinitesimal, this and~\eqref{eq:vfepsequicont1} contradict~\eqref{eq:vfepsequicont2}.
    \end{proof}

    We can now prove the Main Theorem.

    \begin{proof}[Proof of the Main Theorem]
        We denote with $\Omega_0$ the set of full measure such that
        \begin{equation}\label{eq:convsol1}
            \lim_{\varepsilon\to0^+}u_0^\varepsilon((\Phi_\varepsilon(h),k)(x),\omega)=v_0(h),\qquad \text{for any $x\in M$, $h\in\Rds^b$, $k\in\Tcal$ and $\omega\in\Omega_0$}.
        \end{equation}
        By \zcref{phiunifconv} we also have that, for every $\omega$ in a set of full measure $\Omega_1$, $x,x'\in M$, $h,h'\in\Rds^b$, $k,k'\in\Tcal$ and $t>0$,
        \begin{equation}\label{eq:convsol2}
            \lim_{\varepsilon\to0^+}\phi_\varepsilon\mleft(\mleft(\Phi_\varepsilon\mleft(h'\mright),k'\mright)\mleft(x'\mright),(\Phi_\varepsilon(h),k)(x),t,\omega\mright)=t\ovL\mleft(\dfrac{h-h'}t\mright).
        \end{equation}
        $\ovOmega\coloneqq\Omega_0\cap\Omega_1$ is a set of full measure, thus if we can prove the pointwise $\varepsilon\Phi_\varepsilon$-convergence of the maps $u_\varepsilon(\omega)$ to $v$ for any $\omega\in\ovOmega$, our claim will follow from \zcref{vfepsequicont,equicontconvlocunif}.\\
        Since $u_\varepsilon$ and $v$ are, respectively, solutions to~\eqref{eq:HJeps} and~\eqref{eq:HJhom}, we have that $u_\varepsilon(\cdot,0,\omega)=u_0^\varepsilon(\cdot,\omega)$ and $v(\cdot,0)=v_0(\cdot)$, {\it i.e.}, the convergence for $t=0$ is just a consequence of our assumptions. We hence fix $t>0$, $x\in M$ and $h\in\Rds^b$. Given a $h'\in\Rds^b$ so that (see \zcref{minvfhom})
        \begin{equation*}
            v(h,t)=v_0\mleft(h'\mright)+t\ovL\mleft(\frac{h-h'}t\mright),
        \end{equation*}
        we have by~\eqref{eq:convsol1} and~\eqref{eq:convsol2} that, for any $\omega\in\ovOmega$,
        \begin{equation}\label{eq:convsol3}
            \begin{aligned}
                v(h,t)=\,&\lim_{\varepsilon\to0^+}u_0^\varepsilon\mleft(\mleft(\Phi_\varepsilon\mleft(h'\mright),0\mright)(x),\omega\mright)+\phi_\varepsilon\mleft(\mleft(\Phi_\varepsilon\mleft(h'\mright),0\mright)(x),(\Phi_\varepsilon(h),k)(x),t,\omega\mright)\\
                \ge\,&\limsup_{\varepsilon\to0^+}u_\varepsilon((\Phi_\varepsilon(h),0)(x),t,\omega).
            \end{aligned}
        \end{equation}
        Now consider an optimal point for $u_\varepsilon((\Phi_\varepsilon(h),0)(x),t,\omega)$ in~\eqref{eq:vfeps}. Since the action $\G$ is co-compact, we can assume that it takes the form $(h_\varepsilon,k_\varepsilon)(x_\varepsilon)$ where $h_\varepsilon\in\Zds^b$, $k_\varepsilon\in\Tcal$ and $x_\varepsilon\in M_\varepsilon$ is such that $d_\varepsilon(x_\varepsilon,x)\to0$ as $\varepsilon\to0^+$. We have from~\zcref{eq:phiepsconv,minvfeps,normconebound} that there exist a constant $C_1$, an infinitesimal sequence $\{C_\varepsilon\}_{\varepsilon>0}$ and a modulus of continuity $\rho$ such that
        \begin{equation*}
            \|h-\varepsilon h_\varepsilon\|\le C_1d_\varepsilon((h_\varepsilon,k_\varepsilon)(x_\varepsilon),(\Phi_\varepsilon(h),0)(x))+C_\varepsilon\le C_1\rho(t)+C_\varepsilon,
        \end{equation*}
        which in turn implies that $\varepsilon h_\varepsilon$ converges, up to subsequences, to some $h''\in\Rds^b$. We then deduce from~\zcref[pairsep={, }]{eq:convsol1,eq:convsol2,eqcont} that, for any $\omega\in\ovOmega$,
        \begin{equation}\label{eq:convsol4}
            \begin{aligned}
                \liminf_{\varepsilon\to0^+}u_\varepsilon((\Phi_\varepsilon(h),0)(x),t,\omega)&=\lim_{\varepsilon\to0^+}\!u_0^\varepsilon((h_\varepsilon,k_\varepsilon)(x_\varepsilon),\omega)+\phi_\varepsilon((h_\varepsilon,k_\varepsilon)(x),(\Phi_\varepsilon(h),0)(x),t,\omega)\\
                &=v_0\mleft(h''\mright)+t\ovL\mleft(\frac{h-h''}t\mright)\ge v(h,t).
            \end{aligned}
        \end{equation}
        This concludes the proof since~\zcref[pairsep={, }]{eq:convsol3,eq:convsol4,eqcont} show that $u_\varepsilon(\omega)$ pointwise $\varepsilon\Phi_\varepsilon$-converges to $v$ for every $\omega\in\ovOmega$.
    \end{proof}

    \medskip

    \begin{rem}
        When $M$ is compact its asymptotic cone is a point. Under the same assumptions of the Main Theorem we further have that both the effective Lagrangian $\ovL$ and the effective Hamiltonian $\ovH$ are both constant. Then $u_\varepsilon$ will uniformly $\varepsilon\Phi_\varepsilon$-converge a.s.\ to the unique solution of the ODE
        \begin{equation*}
            \mleft\{
            \begin{aligned}
                &\partial_t v(t)=c,&& \text{in $(0,\infty)$},\\
                &v(0)=v_0,
            \end{aligned}
            \mright.
        \end{equation*}
        where $c$ and $v_0$ are constants.
    \end{rem}

    \appendix

    \section{Examples}\label{sec:exe}

    Let us describe some motivating examples.\\

    \noindent{\bf 1. (Periodic case)}
    Let $H:T^*M\times\Omega\to\Rds$ be a Hamiltonian satisfying \zcref[comp]{condcont,condconv,condreg,condcoerc,condstat} and assume that $(\Omega,\Sigma,\Pds)$ is a trivial probability space, namely $\Omega=\{\omega_0\}$, $\Sigma=\{\emptyset,\Omega\}$ and $\Pds=\delta_{\{\omega_0\}}$. Considering its projection $\whH$ on $M/\G$, see~\zcref{eq:Hproj}, we have by~\zcref{eq:lift2rlztn} that
    \begin{equation*}
        H(x,p,\omega_0)=\whH(\pi^*(x,p),\omega_0),\qquad \text{for any $(x,p)\in T^*M$},
    \end{equation*}
    where $\pi^*$ is the inverse of the pullback by the projection $\pi:M\to M/\G$. In view of \zcref{liftisdet}, this proves that every a.s.\ deterministic periodic Hamiltonian on a non-compact connected smooth finite-dimensional manifold without boundary can be obtained from the realizations of an {a.s.\ deterministic} Hamiltonian on a compact manifold.

    \bigskip

    \bigskip

    \noindent{\bf 2. (Quasi-periodic foliation on a torus)} Let us consider non-periodic examples. Let us start from a quasi-periodic translation on a torus and the corresponding induced foliation (this will be generalized afterwards).

    Let $\Tds\coloneqq\Rds/ \Zds$ and consider a Hamiltonian ${H}: \Tds^{n+1} \times \Rds^{n+1} \longrightarrow \Rds$. Let $(\alpha,1) \in \Rds^{n+1}$ be a non-resonant vector, namely $(\alpha,1) \cdot \nu \ne 0$ for every $\nu \in \Zds^{n+1}\setminus \{0\}$. \\
    Let us consider $(\Omega,\Sigma, \Pds)$, where $\Omega = \Tds$, $\Pds$ the Lebesgue (probability) measure on $\Tds$ and $\Sigma$ the $\sigma$-algebra of the Lebesgue measurable subsets of $\Tds$.\\
    We consider the following maps:
    \begin{eqnarray*}
        \tau: \Zds^n \times \Omega &\longrightarrow& \Omega \\
        (h, \omega) &\longmapsto& \tau_h(\omega)\coloneqq\omega + \alpha\cdot h \qquad ({\rm mod.}\, \Zds).
    \end{eqnarray*}
    For every $h \in \Zds^n$ the map $\tau_h: \Omega \longrightarrow \Omega$ is clearly measurable; moreover one can check that it preserves $\Pds$ and it is ergodic (it follows from the non-resonance condition on $(\alpha,1)$ and Kronecker's Theorem).

    We define:
    \begin{eqnarray*}
        H_\alpha: \Rds^{n} \times \Rds^n \times \Omega &\longrightarrow& \Rds\\
        (x, p, \omega) &\longmapsto& {H}(x, \, \omega+ x\cdot \alpha,\, p,\, p\cdot \alpha).
    \end{eqnarray*}

    \medskip

    Let us also consider the following action of $\Zds^n$ on $\Rds^{n} \times \Rds^n$
    \begin{eqnarray*}
        \Zds^n \times (\Rds^{n} \times \Rds^n) &\longrightarrow& (\Rds^{n} \times \Rds^n)\\
        (h, (x,p)) &\longmapsto& h^*(x,p)\coloneqq(x+h, p).
    \end{eqnarray*}

    \medskip

    For $h\in \Zds^n$, we have (we use the fact that $H$ is $\Zds^{n+1}$-periodic):
    \begin{eqnarray*}
        H_\alpha(h^* (x,p),\, \omega) &=& H(x + h, \, \omega+ (x+h)\cdot \alpha,\, p,\, p\cdot \alpha) \\
        &=& H(x, \, (\omega+ h\cdot \alpha) + x\cdot \alpha,\, p,\, p\cdot \alpha)\\
        &=& H(x, \, \tau_h(\omega) + x\cdot \alpha,\, p,\, p\cdot \alpha)\\
        &=& H_\alpha(x,\,p,\, \tau_h(\omega)).
    \end{eqnarray*}

    \medskip

    \begin{rem}
        Geometrically the above example can be interpreted in the following way. We consider a foliation of $\Tds^{n+1}$ with leaves
        \begin{equation*}
            \mathcal F(\omega) \coloneqq\mleft\{(x_1,\dotsc, x_{n+1})\in \Tds^{n+1}: x_{n+1} = \omega + \alpha \cdot (x_1,\dotsc,x_n)\;(\mathrm{mod}.\;\Zds)\mright\}, \qquad \text{for $\omega\in \Tds$}.
        \end{equation*}
        In particular
        \begin{equation*}
            \mleft\{
            \begin{array}{lll}
                \mathcal F(\omega) \equiv \mathcal F(\omega') & & \text{if $\exists\; h\in \Zds^n:\; \tau_h(\omega)=\omega'$}\\
                \mathcal F(\omega) \cap \mathcal F(\omega') = \emptyset && \text{otherwise}.
            \end{array}
            \mright.
        \end{equation*}
        Each of these leaves is dense on $\Tds^{n+1}$. $\Omega$ can be interpreted as a transversal section to this foliation, for example identified with $\{x_1 = \dotsb = x_n= 0\}$, and each Hamiltonian $H_\alpha (\cdot, \cdot, \omega)$ corresponds to the Hamiltonian $H$ restricted to $T^*\mathcal F(\omega)$; the dynamical system induced on $\Omega$ is related to the intersection of the leaves with the transversal section.
    \end{rem}

    \bigskip

    \noindent{\bf 3. (Foliated manifolds)} The above example can be generalized, under suitable assumptions, to other examples of foliated manifolds.

    Let $\Tds\coloneqq\Rds/ \Zds$ and let $M$ be the maximal abelian covering of a closed manifold $\whM$, and denote by $\hat\pi: M\longrightarrow \whM$ the covering map. Consider a Hamiltonian ${H}: T^*(\whM \times \Tds ) \longrightarrow \Rds$. We denote by $(x,\theta) \in \whM\times \Tds$.\\
    Let $\alpha$ be a closed non-exact $1$-form on $\whM$ and, since the lift of $\alpha$ to $M$ is exact,
    let us consider one of its primitive $F_\alpha: M \longrightarrow \Rds$.
    This allows us to define a foliation of $\whM\times \Tds$ by leaves that are diffeomorphic to $M$; namely, for every $\omega\in \Tds$ we consider
    \begin{equation*}
        \mathcal F(\omega) \coloneqq\{(x,\theta)\in \whM\times \Tds: \; (x, \theta) = \psi_{\alpha,\omega}(u),\; u\in M\},
    \end{equation*}
    where
    \begin{eqnarray*}
        \psi_{\alpha,\omega}: M &\longrightarrow& \whM\times \Tds\\
        u &\longmapsto& \mleft(\hat\pi(u),\; \omega + F_\alpha(u) \; ({\rm mod.\; \Zds}) \mright).
    \end{eqnarray*}

    \medskip

    Let us define the following family of Hamiltonians:
    \begin{eqnarray*}
        H_\alpha: T^* M \times \Tds &\longrightarrow& \Rds\\
        (u, p, \omega) &\longmapsto& H\mleft(\psi_{\alpha,\omega}(u),p\circ(\drm\psi_{\alpha,\omega}(u))^{-1}\mright).
    \end{eqnarray*}
    Let $\G \simeq \Zds^b$ be the corresponding group of Deck transformations. In particular, $\G = {\rm im}[\pi_1(\whM) \to H_1(M, \Zds)]$, {\it i.e.}, the image of the Hurewicz homomorphism and $b$ equals the rank of $H_1(M,\Zds)$.

    Let us consider $(\Omega, \Sigma, \Pds)$, where $\Omega=\Tds$, $\Sigma$ denotes the $\sigma$-algebra of the Lebesgue measurable subsets of $\Tds$ and $\Pds$ the Lebesgue (probability) measure on $\Tds$. We consider the following maps:
    \begin{eqnarray*}
        \tau: \G \times \Tds &\longrightarrow& \Tds \\
        (h, \omega) &\longmapsto& \tau_h(\omega)\coloneqq\omega + \langle [\alpha], h \rangle \qquad ({\rm mod.}\, \Zds),
    \end{eqnarray*}
    where $[\alpha] \in H^1(\whM,\Rds)$ denotes the cohomology class of $\alpha$ and $\langle \cdot, \cdot \rangle$ the canonical pairing between $H^1(\whM,\Rds)$ and $H_1(\whM,\Rds)\simeq\Rds\otimes \G$. For every $h \in \G$, the map $\tau_h: \Omega \longrightarrow \Omega$ is clearly measurable and preserves $\Pds$.

    We impose the following {\it non-resonant condition on $\alpha$}, namely:
    \begin{equation*}
        \langle [\alpha], h \rangle \not\in \Zds \qquad \forall\; h \in \G \setminus \{e\},
    \end{equation*}
    where $e$ denotes its identity element. This condition is the analogue of the non-resonant condition for the quasi-periodic case.

    Under this condition, it follows (similarly to the quasi-periodic case) that $\{\tau_h\}_{h \in \G}$ is ergodic.

    We can now check that $H_\alpha$ satisfies our assumptions. In fact, for every $h\in \G$ we have (we denote by $h(\cdot)$ the corresponding Deck transformation):

    \begin{equation*}
        F_\alpha(h(u)) - F_\alpha(u) = \int_{\gamma_{h}} \alpha = \langle \alpha, h \rangle
    \end{equation*}
   for every closed curve $\gamma_h$ representing the homology class $h \in H_1(\whM,\Zds)\simeq \G$, thereby
    \begin{align*}
        \drm\psi_{\alpha,\tau_h\omega}(u)&=\Drm(\hat\pi(u),\tau_h\omega+F_\alpha(u))=\Drm(\hat\pi(u),\omega+\langle[\alpha],h\rangle+F_\alpha(u))\\
        &=\Drm(\hat\pi(h(u)),\omega+F_\alpha(h(u)))=\mleft(\drm\hat\pi_{h(u)},\alpha_{h(u)}\mright)\circ\drm h_u=\drm\psi_{\alpha,\omega}(h(u))\circ\drm h_u
    \end{align*}
    and
    \begin{align*}
        H_\alpha(h^*(u,p),\omega)&=H\mleft(\psi_{\alpha,\omega}(h(u)),p\circ(\drm h_u)^{-1}\circ(\drm\psi_{\alpha,\omega}(h(u)))^{-1}\mright)\\
        &=H\mleft(\hat\pi(h(u)),\omega+F_\alpha(h(u)),p\circ(\drm\psi_{\alpha,\omega}(h(u))\circ\drm h_u)^{-1}\mright)\\
        &=H\mleft(\hat\pi(u),\omega+F_\alpha(u)+\langle\alpha,h\rangle,p\circ(\drm\psi_{\alpha,\tau_h\omega}(u))^{-1}\mright)\\
        &=H\mleft(\hat\pi(u),\tau_h\omega+F_\alpha(u),p\circ(\drm\psi_{\alpha,\tau_h\omega}(u))^{-1}\mright)\\
        &=H_\alpha(u,p,\tau_h\omega).
    \end{align*}

    \medskip

    \section{The Stable-like Norm for stationary ergodic Riemannian metrics}\label{sec:stablenorm}

    In this section we would like to describe an application of our previous discussion to the definition of a stable-like norm for families of Riemannian metrics that are defined on non-compact manifolds and are not necessarily periodic. Let us first start by briefly recalling the classical case.

    \subsection{Stable norm of Riemannian metrics on closed manifolds}

    The \textit{stable norm} of a periodic metric was introduced by Federer~\cite{Federer74} and later popularized by Gromov~\cite{GromovLafontainePansu81,BuragoBuragoIvanov01}. It turned out to be a fundamental tool in the study of Riemannian geometry, particularly in the context of homogenization and asymptotic behavior of geodesics, since it provides a way to measure real homology classes in a manner that accounts for the large-scale geometry of the manifold.

    More precisely, let $\widehat M$ be a closed Riemannian manifold with a metric $\g$, and let $H_1(\widehat M, \Rds)$ denote its first real homology group. We define a function on $H_1(\widehat M, \Rds)$ called the \textit{stable norm of $\g$} in the following way:
    \begin{equation*}
        \|h\|_s \coloneqq\inf\mleft\{ \sum |r_i| \, \mathcal L_\g(\gamma_i) \mright\}, \quad \forall h \in H_1(\widehat M, \Rds),
    \end{equation*}
    where $\gamma_i$ are $1$-simplices, $r_i \in \Rds$, $\sum r_i\gamma_i$ is a cycle representing $h$, and $\mathcal L_\g(\gamma_i)$ is the length element induced by the metric $\g$. This function defines a norm on $H_1(\widehat M, \Rds)$.

    The stable norm captures the \textit{asymptotic behavior of minimal lengths} in a given homology class. This reflects a {\it homogenization effect}: at large scales, the Riemannian metric behaves like a norm on homology. We refer to~\cite{GromovLafontainePansu81,BuragoBuragoIvanov01} for a more detailed presentation.

    \medskip

    There is an alternative, equivalent, way to define the stable norm, which is close in spirit to our previous construction. Let us start by considering the integral classes $h \in H_1(\widehat M, \Zds) \simeq \Zds^b \times \Tcal$, where $\Tcal$ denotes the torsion part and $b$ the rank of $H_1(\widehat M, \Zds)$. Then $H_1(\widehat M, \Zds)$ is at finite Hausdorff distance from its $\Zds^b$ component. Such spaces are equivalent for the purposes of the large-scale geometry, which we are concerned with this section. Thus we simply assume that $H_1(\widehat M, \Zds) \simeq \Zds^b$. Then $H_1(\widehat M, \Zds)$ can be represented as a lattice in a $b$-dimensional vector space $V$. For simplicity, one may keep in mind the picture of $\Zds^b$ as the set of integer points in $\Rds^b$, but the Euclidean structure is irrelevant.

    For $h \in H_1(\widehat M, \Zds)$ we define
    \begin{equation*}
        \ell_\g(h) \coloneqq\min \mleft\{ \mathcal L_\g(\gamma) \mid \gamma \text{ is a smooth closed curve representing } h \mright\},
    \end{equation*}

    One can check that for every $h \in H_1(\widehat M, \Zds)$ the following limit exists~\cite[Proposition~8.5.1]{BuragoBuragoIvanov01} (it follows from a simple version of the subadditive ergodic theorem, see~\cite[Lemma~8.5.2]{BuragoBuragoIvanov01} and also~\cite{Liggett85,Levental88}):
    \begin{equation*}
        \|h\|_s\coloneqq\lim_{n\to\infty} \frac{\ell_{\g}(nh)}{n} \qquad n\in\Nds.
    \end{equation*}

    It is easy to check that this function is even, positively homogeneous and satisfies the triangle inequality on $\Zds^b$. We can extend it to $\Qds^b$ defining
    \begin{equation*}
        \|n^{-1} h \|_s \coloneqq\frac{\|h\|_s}{n} \qquad \forall\; n\in \Zds.
    \end{equation*}
    The extended function is also positive homogeneous and satisfies the triangle inequality. Moreover, it is a Lipschitz function on $\Qds^b \subset \Rds^b$ (for example, with respect to the Euclidean norm $|\cdot |$). Indeed, for $h\coloneqq\sum_{i=1}^b \alpha_i e_i \in \Qds^b$ (where $\{e_i\}_{i=1}^b$ are a set of generators of $\Zds^b$) we have:
    \begin{equation*}
        \|h\|_s \le \sum_{i=1}^b |\alpha_i| \|e_i\|_s \le b \,\max_{1\le i \le b}\{\|e_i\|_s\}\,|h|.
    \end{equation*}
    Since $\Qds^b$ is dense in $\Rds^b$, this function has a unique continuous extension to $\Rds^b$, which is indeed a norm.

    \begin{rem}
        In order to prove that the above definitions coincide, it suffices to observe that they coincide on $H_1(\widehat M, \Zds)$. Then, use the density of rational directions.
    \end{rem}

    \begin{rem}
        The stable norm is intrinsically linked to \emph{Mather's minimal average action} (or \emph{Mather's $\beta$-function}) in the context of geodesic flows on Riemannian manifolds. Consider the Lagrangian $L_\g(x,v) = \g_x(v,v)$ associated with the geodesic flow on $(\widehat M, \g)$ and let $\beta_\g: H_1(\widehat M, \Rds) \longrightarrow \Rds$ be the associated $\beta$-function (we refer to~\cite{Sorrentino15} for a more detailed definition of the $\beta$-function). In particular, $\beta_{\g}$ coincides with the effective Lagrangian associated to $L_\g$, corresponding to the one introduced in Section~\ref{effLsec} in the periodic case (see also~\cite{LionsPapanicolaouVaradhan87, ContrerasIturriagaSiconolfi14}).

        The following relation holds~\cite{Massart97,Massart96}:
        \begin{equation*}
            \beta_\g(h) = \|h\|_s^2 \qquad \forall\; h \in H_1(M, \Rds).
        \end{equation*}
    \end{rem}

    \medskip

    \subsection{Stationary ergodic stable norm}

    We would like to define an analogue of the stable norm in the case of Riemannian metrics on non-compact manifolds, that are not necessarily periodic ({\it i.e.}, they are not the lift of a metric defined on a quotient). This is not a trivial task for several reasons. First of all, it is not clear what should replace the role of the (real) first homology group of the manifold, which might be trivial or not catch the full complexity of the manifold at its ``ends''; moreover, dealing with the behaviour of minimizing geodesics when they approach the ``ends'' of the manifold introduce new complexities, and there is no reason why any asymptotic information should be extracted or be available.

    Our setting will satisfy the Standing Assumptions introduced in Section~\ref{sec:2.3}.

    Let $\{\g_\omega\}_{\omega\in \Omega}$ be a family of Riemannian metrics on $M$ satisfying the following properties:
    \begin{enumerate}[label=\textbf{($\g$\arabic*)},ref=\textnormal{\textbf{($\g$\arabic*)}},font=\normalfont]
        \item\label{RMcont} The family $\omega\mapsto \g_\omega(\cdot,\cdot)$ is a continuous function;
        \item\label{RMcoer} There exist two convex nondecreasing superlinearly coercive functions $\vartheta,\Theta:\Rds^+\to\Rds$ such that
        \begin{equation*}
            \vartheta(|v|_x)\le \g_{\omega,x}(v,v) \le\Theta(|v|_x),\qquad \text{for any }(x,v,\omega)\in TM\times\Omega;
        \end{equation*}
        \item\label{RMstat} Stationarity with respect to $\G$:
        \begin{equation*}
            \g_{\tau_g \omega, x}(v,v) = \g_{\omega, g(x)}(\drm g(v), \drm g(v)),\qquad \text{for any $(x,v,\omega)\in TM\times\Omega$ and $g\in \G$}.
        \end{equation*}
    \end{enumerate}

    We consider the Lagrangians $L: TM\times \Omega \longrightarrow \Rds$
    \begin{equation*}
        L(x,v,\omega) \coloneqq\g_{\omega, x} (v,v)
    \end{equation*}
    and the associated Hamiltonians (defined by convex duality) $H: T^*M\times \Omega \longrightarrow \Rds$. It is easy to check that \zcref[comp]{RMcoer,RMcont,RMstat} imply that $H$ satisfies \zcref[comp]{condcoerc,condcont,condconv,condreg,condstat}.

    \begin{rem}\label{defpos}
        We can assume without loss of generality that $\vartheta(|v|_x)>0$ whenever $v\ne0$. Indeed, let $\whTheta$ be the convex conjugate of $\vartheta$ and set
        \begin{align*}
            \ovTheta:\Rds^+&\longrightarrow\Rds,\\
            r&\longmapsto\sup_{\substack{(x,p)\in T^*M,\\\omega\in\Omega,\,|p|_x\le r}}H(x,r,\omega)\le\whTheta(r).
        \end{align*}
        Exploiting the homogeneity of $L$ one can easily prove that $\ovTheta(r)>0$ whenever $r>0$ and similarly that, for the convex conjugate $\ovvartheta$ of $\ovTheta$, $\ovvartheta(s)>0$ whenever $s>0$. We then get that
        \begin{equation*}
            0<\ovvartheta(|v|_x)\le \g_{\omega,x}(v,v),\qquad \text{for any $(x,v,\omega)\in TM\times\Omega$ with $v\ne0$}.
        \end{equation*}
    \end{rem}

    \begin{defin}
        We define the {\it stationary ergodic stable norm} (with respect to $\G$) associated to $\{\g_\omega\}_{\omega\in \Omega}$ the function on $\Rds^b$ given by
        \begin{equation*}
            \|h\|_{s,\G}\coloneqq\overline{L}(h)^{1/2} \qquad \forall\; h\in \Rds^b,
        \end{equation*}
        where $\overline{L}$ is the effective Lagrangian associated to $L(x,v,\omega) \coloneqq\g_{\omega, x} (v,v)$, see Theorem~\ref{effLex}.
    \end{defin}

    \begin{rem}
        One can check that $\|\cdot\|_{s,\G}$ is indeed a norm on $\Rds^b$:
        \begin{itemize}
            \item Since $L\ge 0$, clearly $\|h\|_{s,\G} \ge 0$ for every $h\in \Rds^b$. Moreover, observe that $L=0$ if and only if $v=0$; therefore, using assumption~\ref{RMcoer} and \zcref{defpos} it follows that $\|h\|_{s,\G} = 0$ if and only if $h=0$.
            \item It follows from Proposition~\ref{homogeneityL} and the fact that $L$ is homogeneous of degree $2$ in $v$ that $\|\alpha\, h\|_{s,\G} = |\alpha| \|h\|_{s,\G}$ for every $h\in \Rds^b$ and every $\alpha\in \Rds$.
            \item Clearly, for every $\omega\in\Omega$ and $\varepsilon>0$ the map
            \begin{equation*}
                (x,y)\longmapsto\phi_\varepsilon(x,y,1,\omega)^{1/2}
            \end{equation*}
            defines a Riemannian distance on $M$. Using that
            \begin{align*}
                \phi_\varepsilon((0,k)(x),(\Phi_\varepsilon(h_1+h_2),k)(x),1,\omega)^{1/2} &\le \phi_\varepsilon((0,k)(x),(\Phi_\varepsilon(h_1),k)(x),1,\omega)^{1/2} \\
                &\hphantom{\le}+ \phi_\varepsilon(\Phi_\varepsilon(h_1,k)(x),(\Phi_\varepsilon(h_2+h_1),k)(x),1,\omega)^{1/2},
            \end{align*}
            Proposition~\ref{homogeneityL} and Corollary~\ref{phiunifconv}, we conclude that
            \begin{equation*}
                \|{h_1+h_2}\|_{s,\G}=\ovL\mleft({h_1+h_2}\mright)^{1/2} \le \ovL(h_1)^{1/2} + \ovL(h_2)^{1/2}=\|h_1\|_{s,\G} + \|h_2\|_{s,\G},
            \end{equation*}
            for all $h_1,h_2 \in \Rds^b$.
        \end{itemize}
    \end{rem}

    \section{Auxiliary Results}\label{app:A}

    We store here some technical results which are needed to our analysis.

    \begin{prop}\label{Gtorsion}
        The action group $\G$ is a torsion group if and only if $M$ is compact.
    \end{prop}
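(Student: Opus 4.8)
The plan is to reduce the statement, via the structure of $\G$, to the equivalence ``$\G$ finite $\iff$ $M$ compact'', and then to prove the two implications using that $\pi\colon M\to M/\G$ is a Riemannian covering map. First recall from \zcref{thisiscov}\ref{en:thisiscov1} that $\G\simeq\Zds^b\times\Tcal$ with $\Tcal$ finite; a finitely generated abelian group is a torsion group precisely when its free rank vanishes, so $\G$ is a torsion group $\iff b=0 \iff \G$ is finite. Thus it suffices to show that $M$ is compact if and only if $\G$ is finite.

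For the implication ``$M$ compact $\Rightarrow \G$ finite'', I would use that $\pi\colon M\to M/\G$ is a covering map with deck group $\G$ (\zcref{thisiscov}). Fix $z\in M$ and set $y\coloneqq\pi(z)$. The fibre $\pi^{-1}(y)$ equals the orbit $\G\cdot z$; it is closed in $M$ (as $\{y\}$ is closed in the Hausdorff space $M/\G$ and $\pi$ is continuous) and discrete (since $\pi$ is a local homeomorphism). If $M$ is compact, this closed discrete subset is finite, and because the covering is regular $\G$ acts simply transitively on it (\zcref{thisiscov}\ref{en:thisiscov1}); hence $|\G|=|\pi^{-1}(y)|<\infty$.

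For the converse ``$\G$ finite $\Rightarrow M$ compact'', the key input is the standing completeness assumption on $M$. The quotient $M/\G$ is compact and connected, so $D\coloneqq\diam(M/\G)<\infty$. Fix $z\in M$. Given $x\in M$, lift to $M$ (starting at $x$) a minimizing geodesic of $M/\G$ from $\pi(x)$ to $\pi(z)$; since $\pi$ is a local isometry the lift has length $\le D$ and terminates at some point of $\pi^{-1}(\pi(z))=\G\cdot z$. Therefore $d(x,g(z))\le D$ for some $g\in\G$, which gives
\begin{equation*}
    M=\bigcup_{g\in\G}\ovB(g(z),D).
\end{equation*}
If $\G$ is finite this is a finite union, and by the Hopf--Rinow theorem each closed ball $\ovB(g(z),D)$ in the complete manifold $M$ is compact; hence $M$ is compact.

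I do not expect a genuine obstacle here: the argument is elementary once the covering structure from \zcref{thisiscov} is in hand. The only points deserving care are the two places where the hypotheses are actually used, namely that $\pi$ is a covering map with discrete, closed fibres (for the first implication) and that closed metric balls in $M$ are compact, which is precisely where completeness enters via Hopf--Rinow (for the second). One could alternatively deduce the second implication from the general fact that a connected finite-sheeted covering space of a compact, locally compact Hausdorff space is compact, but the metric argument above is shorter and fits the setting.
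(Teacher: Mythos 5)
Your proof is correct, but it follows a genuinely different route from the paper's in both directions. For ``$M$ compact $\Rightarrow$ $\G$ torsion'', the paper argues by contradiction: it picks a non-torsion element $g$ and uses total discontinuity to produce infinitely many pairwise disjoint balls $B_\delta(g^i(x))$, so that $\{g^n(x)\}$ has no convergent subsequence; you instead observe that the fibre $\pi^{-1}(y)=\G\cdot z$ is closed and discrete, hence finite when $M$ is compact, and that freeness plus regularity of the covering gives $|\G|=|\pi^{-1}(y)|$ --- a slightly stronger conclusion (finiteness of $\G$, not just torsion) obtained more directly. For ``$\G$ torsion $\Rightarrow$ $M$ compact'', both arguments first reduce to $\G$ finite via finite generation, but then diverge: the paper works with an identification $M=\G\times M/\G$ and writes $M$ as a finite union of compact slices $\{g\}\times M/\G$ (essentially the topological fact that a finite-sheeted cover of a compact space is compact, stated somewhat informally), whereas you cover $M$ by the finitely many closed balls $\ovB(g(z),\diam(M/\G))$, $g\in\G$, via lifting minimizing geodesics through the local isometry $\pi$, and invoke Hopf--Rinow (this is where the standing completeness assumption enters) to get compactness of each ball. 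Your metric argument is self-contained and arguably more careful at the point where the paper's product identification is loose; the paper's covering-space argument avoids any appeal to completeness or geodesics. Both implications in your write-up are sound, and your explicit reduction ``torsion $\iff$ $b=0$ $\iff$ finite'' via $\G\simeq\Zds^b\times\Tcal$ matches what the paper uses implicitly.
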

    \begin{proof}
        We start assuming that $M$ is compact. The action $\G$ is totally discontinuous and each $g$ is an isometry, hence, fixed an $x\in M$, there is an open ball $B_\delta(x)$ centered at $x$ with radius $\delta>0$ such that
        \begin{equation}\label{eq:Gtorsion1}
            B_\delta(g(x))\cap B_\delta(x)=g(B_\delta(x))\cap B_\delta(x)=\emptyset,\qquad \text{for any $g\in \G$ with $g(x)\ne x$}.
        \end{equation}
        Assuming by contradiction that $\G$ is not a torsion group, we have that there is a $g\in \G$ so that $g^i\ne g^j$ whenever $i\ne j$. It follows from~\eqref{eq:Gtorsion1} that, whenever $i\ne j$,
        \begin{equation*}
            B_\delta\mleft(g^i(x)\mright)\cap B_\delta\mleft(g^j(x)\mright)=g^i(B_\delta(x))\cap g^j(B_\delta(x))=g^j\mleft(g^{i-j}(B_\delta(x))\cap B_\delta(x)\mright)=\emptyset,
        \end{equation*}
        namely
        \begin{equation*}
            \mleft|g^i(x)-g^j(x)\mright|>\delta,\qquad \text{whenever $i\ne j$}.
        \end{equation*}
        This in turn implies that the sequence $\{g^n(x)\}_{n\in\Nds}$ does not admit converging subsequences, in contradiction with the compactness of $M$.\\
        Now assume that $\G$ is a torsion group; since $\G$ is finitely generated, then $\G$ is finite. Consider the identification $M=\G\times M/\G$ and the continuous projection maps
        \begin{equation*}
            \pi_g:M=\G\times M/\G\longrightarrow\{g\}\times M/\G,\qquad \text{for $g\in \G$}.
        \end{equation*}
        It is easy to check that
        \begin{equation*}
            \pi_g\circ\pi^{-1}:M/\G\longrightarrow\{g\}\times M/\G
        \end{equation*}
        is a continuous map for every $g\in \G$, {\it i.e.}, each $\{g\}\times M/\G$ is compact. Since $\G$ is finite, then $\G\times M/\G$ endowed with the topology induced by the union $\bigcup_g\{g\}\times M/\G$ is compact because it is the finite union of compact sets. This topology is finer than the one of $M$, thus $M$ is compact.
    \end{proof}

    \begin{lem}\label{normconebound}
        Given a compact $K\subseteq M$, there exist two positive constants $C_1$ and $C_2$ such that for any $(h,k),(h',k')\in \G$, $x,x'\in K$ and $\varepsilon>0$
        \begin{equation}\label{eq:normconebound.1}
            C_1^{-1}d_\varepsilon\mleft((h,k)(x),\mleft(h',k'\mright)\mleft(x'\mright)\mright)-\varepsilon C_2\le\varepsilon\mleft\|h-h'\mright\|\le C_1d_\varepsilon\mleft((h,k)(x),\mleft(h',k'\mright)\mleft(x'\mright)\mright)+\varepsilon C_2.
        \end{equation}
    \end{lem}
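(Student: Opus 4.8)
The plan is to bound $d_\varepsilon((h,k)(x),(h',k')(x'))$ from above and below by the orbit-metric distance $\varepsilon\ovd_z(h-h',0)$ on $\Zds^b$, up to an error that is uniform over the compact set $K$ and over the (finite) torsion group $\Tcal$, and then to invoke \zcref{equivorbit} and \zcref{allnormequiv} to replace $\ovd_z$ by $\|\cdot\|$. Since $d_\varepsilon=\varepsilon d$, it suffices to prove the unscaled estimate
\begin{equation*}
    C_1^{-1}d\mleft((h,k)(x),\mleft(h',k'\mright)\mleft(x'\mright)\mright)-C_2\le\mleft\|h-h'\mright\|\le C_1 d\mleft((h,k)(x),\mleft(h',k'\mright)\mleft(x'\mright)\mright)+C_2,
\end{equation*}
uniformly in $(h,k),(h',k')\in\G$, $x,x'\in K$, and then multiply by $\varepsilon$.

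First I would fix a reference point $z\in M$ and set $R:=\diam(K\cup\{z\})$ (finite, since $K$ is compact and $M$ is connected). By the triangle inequality and the fact that every $g\in\G$ is an isometry, for any $g=(h,k)\in\G$ and $x\in K$ one has $|d(g(x),g'(x'))-d(g(z),g'(z))|\le d(g(x),g(z))+d(g'(x'),g'(z))=d(x,z)+d(x',z)\le 2R$. Hence $d((h,k)(x),(h',k')(x'))$ and $d((h,k)(z),(h',k')(z))=d_z((h,k),(h',k'))$ differ by at most $2R$, a constant depending only on $K$. This reduces everything to comparing the orbit metric $d_z$ on $\G$ with $\|h-h'\|$. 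Next, using that $\Tcal$ is finite, the projection $\G\simeq\Zds^b\times\Tcal\to\Zds^b$ is an $\epsilon_0$-isometry between $(\G,d_z)$ and $(\Zds^b,\ovd_z)$ for some $\epsilon_0>0$ (\zcref{projepsiso}); in particular $|d_z((h,k),(h',k'))-\ovd_z(h,h')|\le\epsilon_0$ with $\epsilon_0$ absolute. By invariance of $\ovd_z$ this equals $\ovd_z(h-h',0)$. Finally, \zcref{equivorbit} gives $A^{-1}|h-h'|\le\ovd_z(h-h',0)\le A|h-h'|$, and since in our normalization $\|\cdot\|$ is the Euclidean norm (\zcref{allnormequiv}), combining the three comparisons yields the claim with, e.g., $C_1:=A$ and $C_2:=A(2R+\epsilon_0)+1$ (taking the larger of the two constants and adding a $1$ to absorb the strict/non-strict issues in the definition of an $\epsilon$-isometry). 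Multiplying through by $\varepsilon$ gives \eqref{eq:normconebound.1}.

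I do not expect a genuine obstacle here: the only point requiring a little care is making sure the additive error is uniform over $x,x'\in K$ and over all group elements simultaneously — this is exactly what the compactness of $K$ (to control $d(x,z)$) and the finiteness of $\Tcal$ (to absorb the torsion coordinate into a single constant $\epsilon_0$) are for. One should also note the estimate is vacuously trivial when $b=0$, since then $\|h-h'\|\equiv 0$ and the left inequality holds because $d$ on $M$ is then bounded (as $M$ is compact by \zcref{Gtorsion}), so $C_2$ can be taken to be that bound; I would dispatch this degenerate case in one line at the start.
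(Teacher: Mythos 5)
Your argument is correct and is essentially the paper's own proof: compare $d((h,k)(x),(h',k')(x'))$ with the orbit distance $d_z((h,k),(h',k'))$ via the triangle inequality and isometry of the action (error $\le 2\diam$ of a compact set), then pass to $\ovd_z$ on $\Zds^b$ through \zcref{projepsiso} and to the norm through \zcref{equivorbit,allnormequiv}, and finally rescale by $\varepsilon$. The only cosmetic differences are that the paper takes $z\in K$ (so the error is $2\diam(K)$) and does not single out the $b=0$ case, which is already absorbed in the cited lemmas.
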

    \begin{proof}
        Fixed $z\in K$ we have, for any $(h,k),(h',k')\in \G$, $x,x'\in K$ and $\varepsilon>0$,
        \begin{multline}\label{eq:normconebound1}
            \mleft|d\mleft((h,k)(x),\mleft(h',k'\mright)\mleft(x'\mright)\mright)-d_z\mleft((h,k),\mleft(h',k'\mright)\mright)\mright|\\
            \begin{aligned}
                &\le d((h,k)(x),(h,k)(z))+d\mleft(\mleft(h',k'\mright)(z),\mleft(h',k'\mright)\mleft(x'\mright)\mright)\\
                &\le d(x,z)+d\mleft(z,x'\mright)\le2\diam(K).
            \end{aligned}
        \end{multline}
        Moreover, we get from \zcref{projepsiso,equivorbit,allnormequiv} that there exist two positive constant $A_1$ and $A_2$ such that
        \begin{equation*}
            A_1^{-1}\mleft\|h-h'\mright\|-A_2\le\ovd_z\mleft(h,h'\mright)-A_2\le d_z\mleft((h,k),\mleft(h',k'\mright)\mright)\le\ovd_z\mleft(h,h'\mright)+A_2\le A_1\mleft\|h-h'\mright\|+A_2.
        \end{equation*}
        The previous inequality and~\zcref{eq:normconebound1} prove~\eqref{eq:normconebound.1}.
    \end{proof}

    \begin{lem}\label{lataux}
        It is given, for every $\delta>0$, an $E_\delta\subseteq\Omega$ with $\Pds(E_\delta)>1-\delta$. There is then a set of full measure $\ovOmega$ such that, fixed an $\omega\in\ovOmega$ and an $\epsilon>0$, there exist two positive constants $R_\epsilon$ and $\delta_\epsilon$ so that, for any $\ovh\in\Rds^b$ with $\epsilon\mleft\|\ovh\mright\|>R_\epsilon$, any open balls centered at $\ovh$ with radius $\epsilon\mleft\|\ovh\mright\|$ must intersect $\mleft\{h\in\Zds^b:\tau_h\omega\in E_{\delta_\epsilon}\mright\}$. Equivalently
        \begin{equation*}
            \mleft\|\ovh-h\mright\|<\epsilon\mleft\|\ovh\mright\|,\qquad \text{for some $\tau_h\omega\in E_{\delta_\epsilon}$}.
        \end{equation*}
    \end{lem}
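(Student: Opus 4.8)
The plan is to deduce the lemma from the multiparameter pointwise ergodic theorem together with an elementary, center-uniform lattice-point count. By \zcref{allnormequiv} I will take $\|\cdot\|$ to be the Euclidean norm on $\Rds^b$ and write $B(y,r)\subseteq\Rds^b$ for the corresponding ball; the case $b=0$ is vacuous, since then $\|\ovh\|=0$ and no $\ovh$ can satisfy $\epsilon\|\ovh\|>R_\epsilon$, so I assume $b\ge1$.

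First I would fix the scales along which to work. For $\ell\in\Nds$ set $f_\ell\coloneqq\mathds{1}_{\Omega\setminus E_{2^{-\ell}}}\in L^1(\Omega,\Pds)$, so that $\Eds[f_\ell]<2^{-\ell}$ by hypothesis. The $\Zds^b$ summand of $\G\simeq\Zds^b\times\Tcal$ acts on $\Omega$ by the measure-preserving maps $\{\tau_h\}_{h\in\Zds^b}$, so the Wiener multiparameter ergodic theorem provides, for each $\ell$, a full-measure set $\Omega_\ell$ and a measurable $\overline{f_\ell}\colon\Omega\to[0,1]$ with $\Eds[\overline{f_\ell}]=\Eds[f_\ell]$ such that
\[
\frac{1}{\#(\Zds^b\cap B(0,n))}\sum_{h\in\Zds^b\cap B(0,n)}f_\ell(\tau_h\omega)\longrightarrow\overline{f_\ell}(\omega)\quad(n\to\infty),\qquad\text{for every }\omega\in\Omega_\ell.
\]
Since each $\overline{f_\ell}\ge0$ and $\Eds\big[\sum_\ell\overline{f_\ell}\big]=\sum_\ell\Eds[f_\ell]\le1$, the set $\ovOmega\coloneqq\bigcap_\ell\Omega_\ell\cap\{\omega:\sum_\ell\overline{f_\ell}(\omega)<\infty\}$ has full measure and on it $\overline{f_\ell}(\omega)\to0$ as $\ell\to\infty$. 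Note that $\ovOmega$ does not depend on $\epsilon$, which is precisely what the statement demands.

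Next I would fix $\omega\in\ovOmega$ and $\epsilon>0$ and run the counting argument. Using the uniform estimate $\#(\Zds^b\cap B(y,r))=\kappa_b r^b+\Orm(r^{b-1})$ (with $\kappa_b\coloneqq\mathrm{vol}(B(0,1))$ and the error bound uniform in the center $y$), together with the inclusion $B(\ovh,\epsilon\|\ovh\|)\subseteq B(0,(1+\epsilon)\|\ovh\|)$, one checks that if the ball $B(\ovh,\epsilon\|\ovh\|)$ were disjoint from $\{h\in\Zds^b:\tau_h\omega\in E_{2^{-\ell}}\}$, then the fraction of $h\in\Zds^b\cap B(0,(1+\epsilon)\|\ovh\|)$ with $\tau_h\omega\notin E_{2^{-\ell}}$ would be at least $c_\epsilon-\Orm(\|\ovh\|^{-1})$, where $c_\epsilon>0$ depends only on $b$ and $\epsilon$ (morally the volume ratio $\epsilon^b/(1+\epsilon)^b$, up to a fixed factor). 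I would then pick $\ell$ so large that $\overline{f_\ell}(\omega)<c_\epsilon/2$ and set $\delta_\epsilon\coloneqq2^{-\ell}$: since $\omega\in\Omega_\ell$, the averages of $f_\ell$ over $B(0,n)$ converge to $\overline{f_\ell}(\omega)<c_\epsilon/2$, so there is $R_\epsilon>0$ (depending on $\omega$ and $\epsilon$) such that whenever $\epsilon\|\ovh\|>R_\epsilon$ the fraction of such bad points in $B(0,(1+\epsilon)\|\ovh\|)$ is $<c_\epsilon/2$, contradicting the lower bound above. Hence for every such $\ovh$ the ball $B(\ovh,\epsilon\|\ovh\|)$ must contain some $h\in\Zds^b$ with $\tau_h\omega\in E_{\delta_\epsilon}$, i.e. $\|\ovh-h\|<\epsilon\|\ovh\|$, which is the assertion.

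The genuinely delicate point — the hard part — is not any individual estimate but producing a single full-measure set $\ovOmega$ serving all $\epsilon$ simultaneously while still being able to let $\delta_\epsilon\to0$ along it. This is what forces the summable choice $\delta_\ell=2^{-\ell}$ (so that $\sum_\ell\Eds[f_\ell]<\infty$ and hence $\overline{f_\ell}\to0$ a.s.\ by Tonelli), and it is also why the lattice-point count must be uniform in the center: only then can the off-center ball $B(\ovh,\epsilon\|\ovh\|)$ be controlled through the centered balls $B(0,n)$ to which the ergodic theorem is applied.
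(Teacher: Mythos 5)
Your proposal is correct and follows essentially the same route as the paper: a pointwise ergodic theorem for the $\Zds^b$-action averaged over centered balls, a center-uniform lattice-point count, and a contradiction argument showing that an off-center ball $B\mleft(\ovh,\epsilon\|\ovh\|\mright)$ missing the good set would force too high a density of bad points in $B\mleft(0,(1+\epsilon)\|\ovh\|\mright)$. The only real difference is bookkeeping: the paper intersects full-measure sets over rational $\delta$ and uses that the ball averages of $\chi_{E_\delta}$ converge to $\Pds(E_\delta)$, whereas you work along dyadic scales $\delta_\ell=2^{-\ell}$ and use summability of $\Eds[\overline{f_\ell}]$ to get $\overline{f_\ell}(\omega)\to0$ a.s., which has the minor advantage of not needing the averages to converge to the mean.
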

    \begin{proof}
        Using Zorn's Lemma we write $\{h_i\}_{i\in\Nds}=\Zds^b$ such that $\|h_i\|\le\|h_j\|$ whenever $i<j$. Then we have by Birkhoff's ergodic Theorem (see, {\it e.g.}, \cite[Theorem~20.14]{Klenke14}) that for any $\delta>0$ there is a set of full measure $\Omega_\delta\subseteq\Omega$ such that
        \begin{equation*}
            \lim_{n\to\infty}\frac1n\sum_{i=1}^n\chi_{E_\delta}(\tau_{h_i}\omega)=\Pds(E_\delta),\qquad \text{for any }\omega\in\Omega_\delta,
        \end{equation*}
        where $\chi_{E_\delta}$ is the characteristic function of $E_\delta$. It follows that for each $\delta>0$ and $\omega\in\Omega_\delta$
        \begin{equation}\label{eq:lataux1}
            \frac{\#\{h_i:i\le n,\,\tau_{h_i}\omega\in E_\delta\}}n\ge\Pds(E_\delta)-\delta>1-2\delta,\qquad \text{whenever $n$ is big enough}.
        \end{equation}
        Next, we define the set of full measure $\ovOmega\coloneqq\bigcap\limits_{\delta\in\Qds^+}\Omega_\delta$ and fix $\omega\in\ovOmega$ and a constant $\epsilon>0$.\\
        It is a well known fact that the number of points of $\Zds^b$, with $b\ge1$, lying in a closed ball $\ovB(h,R)\subseteq\Rds^b$ centered in $h\in\Rds^b$ with radius $R$ satisfies the identity
        \begin{equation*}
            \#\mleft(\ovB(h,R)\cap\Zds^b\mright)=R^b\mleft|\ovB(0,1)\mright|+\Orm\mleft(R^a\mright),
        \end{equation*}
        where $\Orm$ is Landau's symbol and $a\le b-1$. It is then easy to see that there is a constant $C\ge1$ so that, whenever $R$ is big enough,
        \begin{equation}\label{eq:lataux2}
            \#\mleft(\ovB(h,R)\cap\Zds^b\mright)<CR^b.
        \end{equation}
        We then choose a rational $\delta_\epsilon\le\dfrac\epsilon{2C^2(2+\epsilon)}$ and $R_\epsilon>0$ such that~\zcref{eq:lataux1,eq:lataux2} hold true for any $n$ with $\epsilon\|h_n\|>R_\epsilon$ and $R>R_\epsilon$, respectively. Let us assume by contradiction that there is a $\ovh\in\Rds^b$ with $\epsilon\mleft\|\ovh\mright\|>R_\epsilon$ so that the closed ball $\ovB\mleft(\ovh,\epsilon\mleft\|\ovh\mright\|\mright)$ does not intersect $\mleft\{h\in\Zds^b:\tau_h\omega\in E_{\delta_\epsilon}\mright\}$. Setting
        \begin{equation*}
            n\coloneqq\mleft\lfloor C(2+\epsilon)^b\mleft\|\ovh\mright\|^b\mright\rfloor\ge C(1+\epsilon)^b\mleft\|\ovh\mright\|^b>\#\mleft(\ovB\mleft(0,(1+\epsilon)\mleft\|\ovh\mright\|\mright)\cap\Zds^b\mright),
        \end{equation*}
        where $\lfloor\cdot\rfloor$ is the floor function, yields that if
        \begin{equation*}
            h_i\in\ovB\mleft(\ovh,\epsilon\mleft\|\ovh\mright\|\mright)\subset\ovB\mleft(0,(1+\epsilon)\mleft\|\ovh\mright\|\mright)
        \end{equation*}
        then $i<n$ and, consequently, $\epsilon\|h_n\|>R_\epsilon$. It follows that, since $\ovB\mleft(\ovh,\epsilon\mleft\|\ovh\mright\|\mright)$ does not intersect $\mleft\{h\in\Zds^b:\tau_h\omega\in E_{\delta_\epsilon}\mright\}$,
        \begin{equation*}
            \frac{\#\{h_i:i\le n,\,\tau_{h_i}\omega\in E_{\delta_\epsilon}\}}n\le\frac{n-\#\mleft(\ovB\mleft(\ovh,\epsilon\mleft\|\ovh\mright\|\mright)\cap\Zds^b\mright)}n\le1-\frac\epsilon{C^2(2+\epsilon)}\le1-2\delta_\epsilon,
        \end{equation*}
        in contradiction with~\eqref{eq:lataux1}.
    \end{proof}

    The next results focus on the minimal action $\phi_\varepsilon$.

    \begin{lem}\label{phibound}
        There exist two convex nondecreasing superlinearly coercive functions $\vartheta_L,\Theta_L:\Rds^+\to\Rds^+$ such that, for any $\varepsilon>0$, $(x',x,t)\in M_\varepsilon^2\times\Rds^+$ and $\omega\in\Omega$,
        \begin{equation}\label{eq:phibound.1}
            t\vartheta_L\mleft(\frac{d_\varepsilon(x',x)}t\mright)\le\phi_\varepsilon\mleft(x',x,t,\omega\mright)\le t\Theta_L\mleft(\frac{d_\varepsilon(x',x)}t\mright).
        \end{equation}
    \end{lem}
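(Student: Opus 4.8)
The plan is to transfer the coercivity bounds on $H$ from \ref{condcoerc} to the Lagrangian side by convex duality, and then to read off the bounds on $\phi_\varepsilon$ using Jensen's inequality together with the length--distance inequality.

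First I would record the effect of \ref{condcoerc} on $L$. For a convex, nondecreasing, superlinearly coercive $f:\Rds^+\to\Rds$, its one--dimensional Legendre transform $f^\star(s)\coloneqq\sup_{r\ge0}\bigl(rs-f(r)\bigr)$ is again convex, nondecreasing and superlinearly coercive on $\Rds^+$ (nondecreasing because $r\ge0$; coercive because $f$ is finite--valued; convex as a supremum of affine functions). Fixing $(x,\omega)$, the inequality $\vartheta(|p|_x)\le H(x,p,\omega)\le\Theta(|p|_x)$ passes to the convex conjugates in reversed order, and since the bounding functions are radial one computes $\bigl(p\mapsto\vartheta(|p|_x)\bigr)^\star(q)=\sup_{r\ge0}\bigl(r|q|_x-\vartheta(r)\bigr)=\vartheta^\star(|q|_x)$, and similarly for $\Theta$; hence
\[
    \Theta^\star(|q|_x)\le L(x,q,\omega)\le\vartheta^\star(|q|_x),\qquad\text{for all }(x,q,\omega)\in TM\times\Omega .
\]
Combining this with \eqref{eq:lagequiv}, i.e. $L_\varepsilon(x,q,\omega)=L(x,\varepsilon q,\omega)$, and observing that $\varepsilon|q|_x$ is precisely the norm of $q$ in the rescaled metric $d_\varepsilon=\varepsilon d$, I obtain $\Theta^\star(\varepsilon|q|_x)\le L_\varepsilon(x,q,\omega)\le\vartheta^\star(\varepsilon|q|_x)$. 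I would then set $\vartheta_L\coloneqq\Theta^\star$ and $\Theta_L\coloneqq\vartheta^\star$ (if one insists on nonnegative codomains one may further add a suitable constant, which only affects the statement up to an additive $O(t)$ term and does not alter any later use of the lemma).

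For the upper bound in \eqref{eq:phibound.1}, since the Riemannian metric on $M$ is complete, Hopf--Rinow provides a minimizing geodesic from $x'$ to $x$; reparametrizing it on $[0,t]$ with constant $d_\varepsilon$--speed $d_\varepsilon(x',x)/t$ and inserting it into the variational formula \eqref{eq:minacteps} gives
$\phi_\varepsilon(x',x,t,\omega)\le\int_0^t\vartheta^\star\!\bigl(d_\varepsilon(x',x)/t\bigr)\,d\tau=t\,\Theta_L\!\bigl(d_\varepsilon(x',x)/t\bigr)$. For the lower bound, given any absolutely continuous competitor $\gamma:[0,t]\to M_\varepsilon$ from $x'$ to $x$ I would estimate
$\int_0^t L_\varepsilon(\gamma,\dot\gamma,\omega)\,d\tau\ge\int_0^t\Theta^\star\!\bigl(\varepsilon|\dot\gamma|_\gamma\bigr)\,d\tau$, apply Jensen's inequality (convexity of $\Theta^\star$) to bound this below by $t\,\Theta^\star\!\bigl(\tfrac1t\int_0^t\varepsilon|\dot\gamma|_\gamma\,d\tau\bigr)$, and then use that $\int_0^t\varepsilon|\dot\gamma|_\gamma\,d\tau$ is the $d_\varepsilon$--length of $\gamma$, hence $\ge d_\varepsilon(x',x)$, together with the monotonicity of $\Theta^\star$, to conclude $\int_0^t L_\varepsilon(\gamma,\dot\gamma,\omega)\,d\tau\ge t\,\vartheta_L\!\bigl(d_\varepsilon(x',x)/t\bigr)$. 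Taking the infimum over $\gamma$ yields the left inequality in \eqref{eq:phibound.1}.

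There is no serious obstacle here; the result is essentially a bookkeeping exercise in convex analysis. The one point that deserves a little care is the radial reduction of the conjugate of $p\mapsto\vartheta(|p|_x)$ to the one--dimensional Legendre transform $\vartheta^\star$, and the verification that $\vartheta^\star,\Theta^\star$ retain convexity, monotonicity and superlinear coercivity. The only other mild subtlety is ensuring the competitor used in the upper bound is genuinely admissible (an absolutely continuous — indeed smooth — curve defined on all of $[0,t]$), which is exactly where completeness of $M$ enters via Hopf--Rinow.
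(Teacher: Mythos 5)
Your proposal is correct and follows essentially the same route as the paper: define $\vartheta_L\coloneqq\Theta^\star$ and $\Theta_L\coloneqq\vartheta^\star$, deduce the sandwich bound $\vartheta_L(|q|_x)\le L(x,q,\omega)\le\Theta_L(|q|_x)$ by convex duality from \zcref{condcoerc}, obtain the corresponding bounds on $\phi$ (the paper calls this a ``simple check''; your Jensen-plus-geodesic argument is exactly the standard way to carry it out), and then rescale, which via \zcref{eq:minactequiv} is the same as your direct rescaling of $L_\varepsilon$. Only your parenthetical about adding a constant to force nonnegative values should be treated with care (adding a constant to $\vartheta_L$ would spoil the lower bound as stated), but this point is glossed over in the paper as well and does not affect the substance of the argument.
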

    \begin{proof}
        We define $\vartheta_L$ and $\Theta_L$ as the convex conjugate of the maps $\Theta$ and $\vartheta$, respectively, in \zcref{condcoerc}. It is apparent from the definition of $L$ that
        \begin{equation*}
            \vartheta_L(|q|_x)\le L(x,q,\omega)\le\Theta_L(|q|_x),\qquad \text{for any $(x,q)\in TM$ and $\omega\in\Omega$},
        \end{equation*}
        hence is a simple check that, for any $\mleft(x',x,t\mright)\in M^2\in\Rds^+$ and $\omega\in\Omega$,
        \begin{equation}\label{eq:phibound1}
            t\vartheta_L\mleft(\frac{d(x',x)}t\mright)\le\phi\mleft(x',x,t,\omega\mright)\le t\Theta_L\mleft(\frac{d(x',x)}t\mright).
        \end{equation}
        Replacing $t$ with $t/\varepsilon$ in~\eqref{eq:phibound1} proves~\eqref{eq:phibound.1}.
    \end{proof}

    \begin{prop}\label{phiepslip}
        For every positive $A$ and $\varepsilon$ we define the sets
        \begin{equation*}
            \Ccal_{\varepsilon,A}\coloneqq\mleft\{\mleft(x',x,t\mright)\in M_\varepsilon^2\times\Rds^+:d_\varepsilon\mleft(x',x\mright)<At\mright\}.
        \end{equation*}
        There is an $\ell_A>0$, depending on $\vartheta_L$, $\Theta_L$ and $A$, such that $\phi_\varepsilon(\omega)$ is $\ell_A$--Lipschitz continuous in $\overline{\Ccal_{\varepsilon,A}}$ for all $\omega\in\Omega$. In particular, $\{\phi_\varepsilon(\omega)\}_{\varepsilon\in(0,1]}$ is locally $d_\varepsilon$-equiLipschitz continuous in $M_\varepsilon^2\times[t_0,\infty)$ for any $t_0>0$, uniformly with respect to $\omega$.
    \end{prop}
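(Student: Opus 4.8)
The argument has three parts: a reduction to $\varepsilon=1$, an $\omega$--uniform pointwise speed bound for action minimizers in the cone $\{d(x',x)<At\}$, and the Lipschitz estimates themselves, which are routine once the speed bound is available. For the reduction, by the scaling identity~\eqref{eq:minactequiv} one has $\phi_\varepsilon(x',x,t,\omega)=\varepsilon\,\phi(x',x,t/\varepsilon,\omega)$ and $d_\varepsilon=\varepsilon d$; hence $(x',x,t)\in\Ccal_{\varepsilon,A}\iff(x',x,t/\varepsilon)\in\Ccal_{1,A}$, and an $\ell_A$--Lipschitz bound for $\phi(\cdot,\cdot,\cdot,\omega)$ on $\Ccal_{1,A}$ (with respect to $d+d+|\cdot|$) transfers verbatim, with the same constant, to $\phi_\varepsilon(\cdot,\cdot,\cdot,\omega)$ on $\Ccal_{\varepsilon,A}$ (with respect to $d_\varepsilon+d_\varepsilon+|\cdot|$); being Lipschitz on the open set $\Ccal_{\varepsilon,A}$ it then extends to $\overline{\Ccal_{\varepsilon,A}}$ with the same constant, consistently with $\phi_\varepsilon$ in view of the two--sided bound of \zcref{phibound}. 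So it suffices to take $\varepsilon=1$; write $\phi=\phi_1$, $L=L_1$. The ``in particular'' assertion is then immediate: if $x,x'\in B_\varepsilon(x_0,r)$ and $t\ge t_0$, then $d_\varepsilon(x,x')\le 2r\le(2r/t_0)\,t$, so $(x',x,t)\in\Ccal_{\varepsilon,2r/t_0}$.

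\emph{The crux} is the claim that there is $R_A=R_A(\vartheta_L,\Theta_L,A)>0$, independent of $\omega$ and $t$, such that every minimizer $\gamma\colon[0,t]\to M$ realizing $\phi(x',x,t,\omega)$ with $(x',x,t)\in\Ccal_{1,A}$ satisfies $|\dot\gamma(s)|_{\gamma(s)}\le R_A$ for all $s$. Minimizers exist, and since $L(\cdot,\cdot,\omega)$ is a Tonelli Lagrangian obeying \zcref{condreg,condconv,condcoerc} with $\omega$--independent bounds, each minimizer is a $C^1$ extremal whose energy $E_\gamma=\dot\gamma(s)\cdot\partial_qL(\gamma(s),\dot\gamma(s),\omega)-L(\gamma(s),\dot\gamma(s),\omega)=H(\gamma(s),p(s),\omega)$, with $p(s)=\partial_qL(\gamma(s),\dot\gamma(s),\omega)$, is independent of $s$ because $L$ is autonomous (standard Tonelli theory, cf.~\cite{Fathi24}). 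Comparing with the constant--speed geodesic from $x'$ to $x$ and using \zcref{phibound} and the monotonicity of $\Theta_L$ gives the \emph{a priori action bound} $\int_0^tL(\gamma,\dot\gamma,\omega)\,\drm s=\phi(x',x,t,\omega)\le t\,\Theta_L(A)$. Now if $E_\gamma$ were large, then $\Theta(|p(s)|_{\gamma(s)})\ge H(\gamma(s),p(s),\omega)=E_\gamma$ would force $|p(s)|$ to be large uniformly in $s$ (by \zcref{condcoerc}), hence $|\dot\gamma(s)|=|\partial_pH(\gamma(s),p(s),\omega)|\ge(\vartheta(|p(s)|)-\Theta(0))/|p(s)|$ would be large uniformly in $s$ by convexity and superlinearity, so that $\int_0^tL(\gamma,\dot\gamma,\omega)\,\drm s\ge t\,\vartheta_L(\min_s|\dot\gamma(s)|)>t\,\Theta_L(A)$, contradicting the action bound. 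Therefore $E_\gamma\le C_0(\vartheta_L,\Theta_L,A)$, whence $|p(s)|\le\vartheta^{-1}(C_0)$ and finally $|\dot\gamma(s)|\le R_A$, where $R_A$ is the bound for $|\partial_pH(x,\cdot,\omega)|$ on $\overline{B}_x(0,\vartheta^{-1}(C_0))$; this bound is finite and uniform in $x,\omega$, since a finite convex function squeezed between $\vartheta(|\cdot|)$ and $\Theta(|\cdot|)$ has, on each ball, gradient controlled only in terms of the radius and of $\vartheta,\Theta$. The same remark produces $\Lambda=\Lambda(\vartheta_L,\Theta_L,R_A)$ such that $q\mapsto L(x,q,\omega)$ is $\Lambda$--Lipschitz on $\overline{B}_x(0,2R_A)$, uniformly in $x,\omega$ (apply it to $L=H(x,\cdot,\omega)^*$, which lies between $\vartheta_L(|\cdot|)$ and $\Theta_L(|\cdot|)$).

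Given the speed bound, the Lipschitz estimates are standard reparametrization arguments built on the fixed minimizer $\gamma$. In the time variable: appending a constant curve gives $\phi(x',x,t+s,\omega)\le\phi(x',x,t,\omega)+s\,\Theta_L(0)$, while the reparametrization $\tau\mapsto\gamma(\tau t/(t-s))$ on $[0,t-s]$ (speed $\le 2R_A$ for small $s$) together with the $\Lambda$--Lipschitz bound on $L$ and $\phi\ge0$ gives $\phi(x',x,t-s,\omega)\le\phi(x',x,t,\omega)+\Lambda R_A\,s$. In the space variables: to move the endpoint from $x$ to a nearby $y$, reparametrize $\gamma$ onto $[0,t-\delta]$ with $\delta=d(x,y)$ and concatenate with a unit--speed minimizing geodesic from $x$ to $y$ on $[t-\delta,t]$; the same Lipschitz--in--$q$ estimate and $L\le\Theta_L(1)$ along that geodesic yield $\phi(x',y,t,\omega)\le\phi(x',x,t,\omega)+(\Lambda R_A+\Theta_L(1))\,d(x,y)$, and the symmetric construction handles a perturbation of the initial point $x'$. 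These local one--sided bounds hold near any point of the open set $\Ccal_{1,A}$ (where all intermediate configurations still lie in $\Ccal_{1,A}$); chaining them along paths inside $\Ccal_{1,A}$ shows that $\phi(\cdot,\cdot,\cdot,\omega)$ is $\ell_A$--Lipschitz on $\Ccal_{1,A}$ with $\ell_A=\max\{\Theta_L(0),\,\Lambda R_A+\Theta_L(1)\}$, depending only on $\vartheta_L,\Theta_L,A$. Together with the first paragraph this proves the statement. The only genuinely non--routine step is the speed bound: everything rests on the coercivity bounds of \zcref{condcoerc} being uniform in $\omega$, which is precisely what forces $C_0$, $R_A$, $\Lambda$ — hence $\ell_A$ — to be independent of $\omega$.
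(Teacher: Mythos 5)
Your proposal is correct in strategy, but it is not the route the paper takes: the paper's entire proof consists of citing Davini~\cite[Theorem~3.1]{Davini07} for the case $\varepsilon=1$ and then transferring the estimate to $\phi_\varepsilon$ via the scaling identity~\eqref{eq:minactequiv} --- exactly your first paragraph. Everything after that in your write-up is a re-proof of the cited result. What your route buys is a self-contained argument with an explicit constant and a transparent explanation of why $\ell_A$ depends only on $\vartheta_L,\Theta_L,A$ and is uniform in $\omega$ (the $\omega$-independence of the bounds in \zcref{condcoerc}); what the citation buys is that Davini's theorem is proved for merely measurable Lagrangians by working with near-minimizing curves and reparametrizations only, so it never touches the regularity of minimizers.

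That difference matters for the two points your sketch glosses over. First, your ``crux'' uses energy conservation along a $\Crm^1$ minimizing extremal; under \zcref{condreg,condconv} the dual Lagrangian $L=H^*$ is only guaranteed to be $\Crm^1$ and strictly convex in $q$, not Tonelli, so you should either invoke the Lipschitz regularity of minimizers for autonomous convex superlinear problems (true, and with the two-sided bounds of \zcref{phibound} the Lipschitz constant depends only on $\vartheta_L,\Theta_L,A$, which in fact gives the speed bound directly) or run the Erdmann/inner-variation argument on near-minimizers as Davini does, bypassing extremals altogether. Second, ``chaining local one-sided bounds along paths inside $\Ccal_{1,A}$'' gives Lipschitz continuity with respect to the intrinsic (path) metric of the cone, not the ambient one; your local estimates also require $s,\delta\lesssim t$, so for configurations far apart relative to $t$ you need either the (true, but unproved) quasi-convexity of $\overline{\Ccal_{1,A}}$ with constant depending only on $A$, or the crude bound $|\phi_1-\phi_2|\le\max(t_1,t_2)\Theta_L(A)$ from \zcref{phibound} to handle them separately; this may enlarge $\ell_A$ by an $A$-dependent factor, which is harmless. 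Both points are repairable by standard arguments, so I would call this a correct alternative proof modulo standard facts rather than a gap.
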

    \begin{proof}
        This result has been proved in~\cite[Theorem~3.1]{Davini07} when $\varepsilon=1$ and~\zcref{eq:minactequiv} extend it to our case. The local $d_\varepsilon$-equiLipschitz continuity follows from that.
    \end{proof}

    \begin{lem}\label{phiepsconvaux}
        We have
        \begin{equation*}
            \lim_{\varepsilon\to0^+}\mleft|\phi_\varepsilon\mleft(\mleft(\Phi_\varepsilon\mleft(h'\mright),k'\mright)\mleft(x'\mright),(\Phi_\varepsilon(h),k)(x),1,\omega\mright)\!-\frac1t\phi_{t\varepsilon}\mleft(\mleft(\Phi_{t\varepsilon}\mleft(th'\mright),k'\mright)\mleft(x'\mright),(\Phi_{t\varepsilon}(th),k)(x),t,\omega\mright)\mright|\!=0
        \end{equation*}
        for any $x,x'\in M$, $k,k'\in\Tcal$, $h,h'\in\Rds^b$, $\omega\in\Omega$ and $t>0$.
    \end{lem}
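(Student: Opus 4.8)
The plan is to turn the statement into a comparison of two minimal actions for the \emph{unrescaled} Lagrangian that carry the \emph{same} time parameter, and then to estimate the difference of their endpoints using \zcref{normconebound,phiepslip} together with \eqref{eq:phiepsconv}. First I would apply the scaling identity \eqref{eq:minactequiv}: it gives $\phi_\varepsilon(\cdot,\cdot,1,\omega)=\varepsilon\,\phi(\cdot,\cdot,1/\varepsilon,\omega)$, and since $\phi_{t\varepsilon}(\cdot,\cdot,t,\omega)=t\varepsilon\,\phi(\cdot,\cdot,1/\varepsilon,\omega)$ also $\frac1t\phi_{t\varepsilon}(\cdot,\cdot,t,\omega)=\varepsilon\,\phi(\cdot,\cdot,1/\varepsilon,\omega)$. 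Hence, writing $A_\varepsilon\coloneqq(\Phi_\varepsilon(h),k)(x)$, $A_\varepsilon'\coloneqq(\Phi_\varepsilon(h'),k')(x')$, $B_\varepsilon\coloneqq(\Phi_{t\varepsilon}(th),k)(x)$ and $B_\varepsilon'\coloneqq(\Phi_{t\varepsilon}(th'),k')(x')$, the quantity whose limit we must compute equals
\[
\varepsilon\,\bigl|\phi(A_\varepsilon',A_\varepsilon,1/\varepsilon,\omega)-\phi(B_\varepsilon',B_\varepsilon,1/\varepsilon,\omega)\bigr|,
\]
so that the two minimal actions now share the same time $1/\varepsilon$ and differ only through their space endpoints.

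Next I would check that, for all small $\varepsilon$, the triples $(A_\varepsilon',A_\varepsilon,1/\varepsilon)$ and $(B_\varepsilon',B_\varepsilon,1/\varepsilon)$ lie in a common cone $\overline{\Ccal_{1,A}}$ with $A=A(h,h')$ independent of $\varepsilon$. Indeed, \zcref{normconebound} (with $\varepsilon=1$ and a compact $K\ni x,x'$) yields $d(A_\varepsilon',A_\varepsilon)\le C_1(\|\Phi_\varepsilon(h')-\Phi_\varepsilon(h)\|+C_2)$, while \eqref{eq:phiepsconv} gives $\varepsilon\|\Phi_\varepsilon(h)\|\le\|h\|+C_\varepsilon$, so $\varepsilon\,d(A_\varepsilon',A_\varepsilon)\le C_1(\|h\|+\|h'\|+1)$ once $\varepsilon$ is small; the same bound holds for $\varepsilon\,d(B_\varepsilon',B_\varepsilon)$, since $\varepsilon\|\Phi_{t\varepsilon}(th)\|=t^{-1}\|t\varepsilon\Phi_{t\varepsilon}(th)\|\le\|h\|+C_{t\varepsilon}/t$. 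Choosing $A\coloneqq C_1(\|h\|+\|h'\|+1)+1$ then places both triples in $\Ccal_{1,A}$, and since $1/\varepsilon\to\infty$ we may invoke \zcref{phiepslip} with $\varepsilon=1$: $\phi=\phi_1$ is $\ell_A$-Lipschitz on $\overline{\Ccal_{1,A}}$, uniformly in $\omega$. As the two triples have the same time coordinate $1/\varepsilon$, this gives $\bigl|\phi(A_\varepsilon',A_\varepsilon,1/\varepsilon,\omega)-\phi(B_\varepsilon',B_\varepsilon,1/\varepsilon,\omega)\bigr|\le\ell_A\bigl(d(A_\varepsilon',B_\varepsilon')+d(A_\varepsilon,B_\varepsilon)\bigr)$.

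It then remains to estimate the endpoint distances. Since $A_\varepsilon$ and $B_\varepsilon$ share the same torsion component $k$ and base point $x$, \zcref{normconebound} (again with $\varepsilon=1$) gives $d(A_\varepsilon,B_\varepsilon)\le C_1(\|\Phi_\varepsilon(h)-\Phi_{t\varepsilon}(th)\|+C_2)$, and by the triangle inequality and \eqref{eq:phiepsconv},
\[
\|\Phi_\varepsilon(h)-\Phi_{t\varepsilon}(th)\|\le\tfrac1\varepsilon\bigl(\|\varepsilon\Phi_\varepsilon(h)-h\|+\|h-\varepsilon\Phi_{t\varepsilon}(th)\|\bigr)\le\tfrac1\varepsilon\bigl(C_\varepsilon+C_{t\varepsilon}/t\bigr),
\]
using that $\|h-\varepsilon\Phi_{t\varepsilon}(th)\|=t^{-1}\|th-t\varepsilon\Phi_{t\varepsilon}(th)\|\le t^{-1}C_{t\varepsilon}$; the analogous bound holds for $d(A_\varepsilon',B_\varepsilon')$. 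Combining everything, the expression in the statement is bounded by $2\ell_A C_1\bigl(C_\varepsilon+C_{t\varepsilon}/t+\varepsilon C_2\bigr)$, which is infinitesimal because $\{C_\varepsilon\}$ is. The one genuinely delicate point — really the crux of the computation — is that the $\varepsilon^{-1}$ growth of $d(A_\varepsilon,B_\varepsilon)$ is exactly cancelled by the prefactor $\varepsilon$ coming from \eqref{eq:minactequiv}; this cancellation works only because \eqref{eq:phiepsconv} provides a genuinely infinitesimal control $C_\varepsilon$ on $\|\varepsilon\Phi_\varepsilon(h)-h\|$, not merely convergence $\varepsilon\Phi_\varepsilon(h)\to h$.
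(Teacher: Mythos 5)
Your proof is correct and follows essentially the same route as the paper: rescale both actions via \zcref{eq:minactequiv} to a common time $1/\varepsilon$, check that the relevant triples lie in a fixed cone so that \zcref{phiepslip} applies, and bound the endpoint distances through \zcref{normconebound} together with \zcref{eq:phiepsconv}, the $\varepsilon$ prefactor cancelling the $\varepsilon^{-1}$ growth. The only (cosmetic) difference is that the paper first reduces to $h'=0$ and invokes the rescaled Lipschitz bound in $d_\varepsilon$, whereas you treat general $h,h'$ directly with the $\varepsilon=1$ Lipschitz constant and multiply by $\varepsilon$ at the end.
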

    \begin{proof}
        To ease the proof we will assume that $h'=0$, the same arguments also prove the general case. Taking into account \zcref{phiepslip,eqcont,eq:minactequiv}, the claim is equivalent to show that
        \begin{equation}\label{eq:phiepsconvaux1}
            \lim_{\varepsilon\to0^+}\varepsilon\mleft|\phi\mleft((0,k)(x),(\Phi_\varepsilon(h),k)(x),\frac1\varepsilon,\omega\mright)-\phi\mleft((0,k)(x),(\Phi_{t\varepsilon}(th),k)(x),\frac1\varepsilon,\omega\mright)\mright|=0.
        \end{equation}
        By \zcref{normconebound,eq:phiepsconv} there exist a $C_1>0$ and an infinitesimal sequence $\{C_\varepsilon\}_{\varepsilon>0}$ of positive numbers so that, for any $\varepsilon>0$,
        \begin{align*}
            d((0,k)(x),(\Phi_\varepsilon(h),k)(x))&\le\frac1\varepsilon(C_1\|h\|+C_\varepsilon),\\
            d((0,k)(x),(\Phi_{t\varepsilon}(th),k)(x))&\le\frac1{t\varepsilon}(C_1\|th\|+C_\varepsilon)=\frac1\varepsilon\mleft(C_1\|h\|+\frac{C_\varepsilon}t\mright).
        \end{align*}
        \zcref[S]{phiepslip,normconebound} then yields that, for some positive constants $\ell$ and $C$,
        \begin{multline*}
            \varepsilon\mleft|\phi\mleft((0,k)(x),(\Phi_\varepsilon(h),k)(x),\frac1\varepsilon,\omega\mright)-\phi\mleft((0,k)(x),(\Phi_{t\varepsilon}(th),k)(x),\frac1\varepsilon,\omega\mright)\mright|\\
            \le\ell d_\varepsilon((\Phi_\varepsilon(h),k)(x),(\Phi_{t\varepsilon}(th),k)(x))\le\ell C\mleft(\|\varepsilon\Phi_\varepsilon(h)-h\|+\frac1t\|t\varepsilon\Phi_{t\varepsilon}(th)-th\|\mright).
        \end{multline*}
        This proves~\eqref{eq:phiepsconvaux1} since the right-hand side of the previous inequality tends to $0$ as $\varepsilon\to0^+$ by~\eqref{eq:phiepsconv}.
    \end{proof}

\end{document}